\tikzset{
    partial ellipse/.style args={#1:#2:#3}{
        insert path={+ (#1:#3) arc (#1:#2:#3)}
    }
}
\newcommand\mtx[1]{\bm{\mathsf{#1}}}
\newcommand\elem[1]{\mathsf{#1}}
\newcommand{\lp}{\left(}
\newcommand{\rp}{\right)}
\newcommand\bbR{\mathbb R}
\newcommand\bx{\boldsymbol{x}}
\newcommand\by{\boldsymbol{y}}
\newcommand\br{\boldsymbol r}
\newtheorem{theorem}{\sffamily Theorem}
\newtheorem{remark}{\sffamily Remark}
\newtheorem{definition}{\sffamily Definition}
\newtheorem{lemma}{\sffamily Lemma}
\newtheorem{corollary}{\sffamily Corollary}[theorem]
\newtheorem{conjecture}{\sffamily Conjecture}
\newcommand{\cA}{\mathcal A}
\newcommand{\cB}{\mathcal B}
\newcommand{\cN}{\mathcal N}
\newcommand{\cS}{\mathcal S}
\newcommand{\cO}{\mathcal O}
\newcommand{\cI}{\mathcal I}
\newcommand{\cL}{\mathcal L}
\newcommand{\cK}{\mathcal K}
\newcommand{\cR}{\mathcal R}
\newcommand{\cT}{\mathcal T}
\newcommand{\surflap}{\Delta_\Gamma}
\newcommand{\surfdiv}{\nabla_\Gamma \cdot}
\newcommand{\surfgrad}{\nabla_\Gamma}
\newcommand{\gradg}{\surfgrad}
\newcommand{\divg}{\surfdiv}
\newcommand{\LB}{\surflap}
\numberwithin{equation}{section}
\newcommand{\bs}{\boldsymbol}
\newcommand{\Ho}{H^{1}(\Gamma)}
\newcommand{\Hoo}{H^{1}_{mz}(\Gamma)}
\newcommand{\figref}[1]{\figurename~\hyperref[#1]{\ref{#1}}}
\newcommand{\tableref}[1]{\tablename~\hyperref[#1]{\ref{#1}}}
\newcommand{\DO}{\divg a\gradg}
\renewcommand{\phi}{\varphi}
\newcommand{\dx}{\bs r}
\newcommand{\dxnorm}{r^2}
\newcommand{\trace}{\operatorname{tr}}
\newcommand{\KLB}{K_{\text{LB}}}
\newcommand{\RLB}{R_{\text{LB}}}
\newcommand{\RopLB}{\mathcal{R}_{\text{LB}}}
\begin{document}
\begin{titlepage}

  \raggedleft
  {\sffamily \bfseries STATUS: arXiv pre-print}
  
  \hrulefill
  
  \raggedright
 \begin{textblock*}{\linewidth}(1.25in,2in) 
    {\LARGE \sffamily \bfseries A parametrix method for elliptic surface PDEs}
  \end{textblock*}

  \vspace{1.5in}
  Tristan Goodwill\footnote{Research supported in part by the Research 
  Training Group in Modeling and Simulation funded by the National Science
  Foundation via grant RTG/DMS-1646339, by the Simons Foundation/SFARI (560651,
  AB), and the Office of Naval Research under awards \#N00014-18-1-2307
  and \#N00014-21-1-2383.}\\ \textit{\small Department of Statistics\\
  University of Chicago\\
  Chicago, IL 60637}\\ \texttt{\small tgoodwill@uchicago.edu}

  \vspace{1.5\baselineskip}
  Michael O'Neil\footnote{Research supported in part by
  the Simons Foundation/SFARI (560651, AB) and the Office of Naval 
  Research under awards \#N00014-18-1-2307
  and \#N00014-21-1-2383.}\\
  \textit{\small Courant Institute, NYU\\
    New York, NY 10012}\\
  \texttt{\small oneil@cims.nyu.edu}

  \begin{textblock*}{\linewidth}(1.25in,7in) 
    \today
  \end{textblock*}


  
\end{titlepage}

\begin{abstract}
  Elliptic problems along smooth surfaces embedded in three dimensions
  occur in thin-membrane mechanics, electromagnetics (harmonic vector
  fields), and computational geometry. In this work, we present a
  parametrix-based integral equation method applicable to several
  forms of variable coefficient surface elliptic problems. Via the use
  of an approximate Green's function, the surface PDEs are transformed
  into well-conditioned integral equations. We demonstrate high-order
  numerical examples of this method applied to problems on general
  surfaces using a variant of the fast multipole method based on
  smooth interpolation properties of the kernel. Lastly, we discuss
  extensions of the
  method to surfaces with boundaries.\\
  
  \noindent {\sffamily\bfseries Keywords}: Surface elliptic PDE,
  Laplace-Beltrami, parametrix, surface boundary value problems

\end{abstract}

\small
\tableofcontents

\newpage

\normalsize

\section{Introduction}

For most of the constant-coefficient (elliptic) partial differential
equations (PDEs) of classical mathematical physics, exact forms of the
solution operator, or Green's function, are widely known and often
used to transform boundary value problems into boundary integral
equation formulations. In many situations, the resulting integral
equations are the preferred formulation for solving the problem
numerically due to their conditioning properties and their ease of
handling complex geometries (including unbounded ones). As a result of
incredible development over the past 20-30 years in areas such as
hierarchical linear algebra, fast multipole methods, quadrature for
singular functions, and computational geometry, obtaining high-order
accurate solutions of PDEs via integral equation formulations is
becoming more and more commonplace.

However, for the majority of PDEs, which are in general not
constant-coefficient, the associated analytical and numerical
machinery is virtually non-existent due to the absence of known
Green's functions (and even ones that are known present difficulties,
due to their non-translational-invariance). Almost all numerical
approaches rely on a direct discretization of the differential
operator via finite elements or finite difference schemes. However,
often times an \emph{approximate} Green's function, also known as a
parametrix, can be constructed that can transform the PDE into an
integral equation, albeit with kernels that need not satisfy the
underlying homogeneous PDE themselves. A particular class of such elliptic PDEs
that are of interest are what we call \emph{surface PDEs}.

Elliptic PDEs on a surface are an extension of the corresponding problems in the
plane. To be more precise, we let~$\Gamma$ be a smooth closed surface embedded
in~$\bbR^3$, and~$\divg$ and~$\gradg$ be the intrinsic surface divergence and
surface gradient on~$\Gamma$. We then define an elliptic PDE along~$\Gamma$ to
be one of the form
\begin{equation}
  \label{eq:surf_PDE}
    \divg a \gradg u + \bs b\cdot \gradg u + c u=f.
\end{equation}
Above, the goal is to find a function~$u$ given some known function~$f$,
some smooth and positive function~$a$, some continuous tangential vector
field~$\bs b$, and some continuous function~$c$. Our assumptions on these functions
and the geometry will be made more explicit later on in the manuscript.

A particularly important surface elliptic problem is the Laplace-Beltrami
problem, in which we wish to find a~$u$ such that
\begin{equation}
    \LB u :=\divg\gradg u = f.\label{eq:LB_prob}
\end{equation}
The operator~$\LB$ is known as the Laplace-Beltrami operator and is the surface
equivalent of the Laplace operator. The Laplace-Beltrami problem appears in many
fields, including computer graphics for interpolation and shape
analysis~\cite{andreux2015,vallet2008spectral}, plasma
physics~\cite{Malhotra2019}, vesicle deformation in biological
flows~\cite{rahimian2010petascale,veerapaneni2011fast}, surface
diffusion~\cite{bansch2005finite,escher1998surface}, computational
geometry~\cite{kromer2018}, and computer vision~\cite{angenent1999brain}. It is
also used in electromagnetics to find the Hodge decomposition of a tangential
vector field into curl-free and divergence-free components. See
Section~\cite{Epstein2009,Epstein2013,ONeil2018,nedelec2001} for more details
regarding such decompositions, their uses, and a proof of their uniqueness.

The Laplace-Beltrami problem is not the only equation of the
form~\eqref{eq:surf_PDE} that is of interest. For example, the oscillations of
thin films~\cite{Kuo2012,Grinfeld2010,Grinfeld2012} can be modeled by the
Helmholtz-Beltrami problem:
\begin{equation}
    \lp \LB  + k^2 \rp u = f,
\end{equation}
where $k$ is a positive function that is typically constant. Another example
that we shall mention occurs when attempting to solve the diffusion equation on
a surface~\cite{chung2004,NOVAK20071271} using the method of
semi-discretization. The equation we consider is
\begin{equation}
    \partial_t u=\LB u, \label{eq:surf_heat}
\end{equation}
where~$u$ represents some quantity diffusing over $\Gamma$. As an example, we
suppose that we discretize~\eqref{eq:surf_heat} in time with the backwards Euler
method:
\begin{equation}
  \frac{u\lp t_{n+1}\rp-u\lp t_n\rp}{\Delta t} = \LB u\lp t_{n+1}\rp,
\end{equation}
where~$\Delta t = t_{{n+1}} - t_{n}$.  Rearranging this equation
results in a modified Helmholtz-Beltrami problem for the solution at the next
timestep,~$u(t_{n+1})$:
\begin{equation}
    \LB u\lp t_{n+1}\rp -\frac{1}{\Delta t}u\lp t_{n+1}\rp = - \frac{1}{\Delta t}u\lp t_{n}\rp.
      \label{eq:yukawa_beltrami}
\end{equation}
If the diffusion equation is complicated by varying the diffusion constant or
adding a drift term, such as is considered in \cite{naumovets1985surface},
the analogous elliptic equation has
a non-constant~$a$ or non-zero~$\bs b$.

There are a few existing numerical schemes for solving surface elliptic
problems. Many methods are based on direct discretization of the differential
operators, including finite element
methods~\cite{Dziuk1988,bansch2005finite,Burman2017,Bonito2020,
  demlow2007adaptive,Burman2020,bonito2018}, the so-called \emph{virtual element
  method}~\cite{frittelli2018virtual,beirao2014hitchhiker}, differencing
methods~\cite{wang2018modified}, pseudo-spectral methods
\cite{Imbert-Gerard2017}, and hierarchical Poincar\'e–Steklov methods
\cite{fortunato2022high}. These schemes all have the advantage that they can be
applied to a broad range of equations and many of them can be made to be
high-order; a significant effort has been made toward the efficiency of these
schemes. However, the fact that they involve discretizations of the differential
operators can make them poorly conditioned. This is of particular concern when
adaptive discretizations are required, such as when the surface, right hand
side, or PDE coefficients have sharp features. Another class of methods that is
commonly applied to solve surface PDEs are level set
methods~\cite{bertalmio2001variational,macdonald2008level,macdonald2010implicit,ruuth2008simple}.
These methods rely on formulas which relate finite differences in the volume
around a surface to the intrinsic surface differentials themselves. Lastly,
there exist a class of methods known as \emph{closest point
  methods}~\cite{chen2015closest,king2023closest} which use an underlying
representation of the surface in terms of \emph{closest points}. The associated
embedding is then discretized using finite differences, similar in style to when
solving PDEs via level set methods. As both of these schemes also involve the
discretized differential operators, they can suffer from the same conditioning
problems as other direct methods.

For these reasons and others, we wish to consider methods that don't involve
discretizing differential operators. In particular, we wish to convert the
surface PDE into an integral equation. This has the advantage that when the
integral equation is Fredholm second-kind, it is generally as well-conditioned
as the underlying physical problem and can be solved accurately with high-order
quadrature schemes that have been developed over the past several decades.
Furthermore, discretized integral equations can often be solved asymptotically
fast when combined with hierarchical compression methods, such as the fast
multipole method (FMM)~\cite{greengard1987fast}. Integral equation methods have
previously been used to solve the Laplace-Beltrami problem, but have required
discretizing the composition of a large number of weakly singular
operators~\cite{ONeil2018,agarwal2023fmm}, or have only been applied to specific
geometries, such as axisymmetric surfaces~\cite{Epstein2019,o2018integral} or
spheres~\cite{kropinski2014fast,Kropinski2016,Alves2018}. Other surface PDEs
have not received as much attention and there only exist integral equation
methods for constant coefficient PDEs on subsets of the
sphere~\cite{Kropinski2016}.

In this paper, we present a parametrix (also referred to as an approximate
Green's function or L\'evy function) for~\eqref{eq:surf_PDE} and use it to
convert~\eqref{eq:surf_PDE} into an integral equation. As we shall see below, if
the kernel of an integral operator is a parametrix for a PDE, then
preconditioning the PDE with that operator results in a Fredholm second-kind
integral equation. Parametrix based methods have been used to solve elliptic
problems in the
plane~\cite{Chkadua2009,Chkadua2010,Chkadua2013,Chkadua2018,Beshley2018,
  Fresneda-Portillo2020,Fresneda-Portillo2021}, but have not been widely applied
to problems along surfaces (despite the underlying theory being closely
related). Our method can be thought of as an extension of the method presented
in~\cite{kropinski2014fast} to more general equations and surfaces.
In~\cite{kropinski2014fast}, it was shown that the Green's function for the
Poisson equation in the plane gives the Green's function for the
Laplace-Beltrami problem on a sphere, up to an additive constant function. In
this paper, we build on this observation and use the Green's functions for the
Poisson and Helmholtz equations in the plane to find a parametrix
for~\eqref{eq:surf_PDE} on general smooth surfaces. We then use this parametrix
to turn~\eqref{eq:surf_PDE} into an integral equation and present a method for
solving that integral equation numerically.

A parametrix-based integral equation has a few key advantages over other
integral equation methods relying on Calder\`on-type identities or other
potential-theoretic manipulations. First, the integral equation involves only a
single integral operator with a kernel that is less singular than those required
in the existing method for solving the Laplace-Beltrami problem on general
surfaces~\cite{ONeil2018}. Second, the approach is immediately applicable to a
broad range of equations along any smooth surface. Finally, as demonstrated
later on, using the parametrix as an analogue for the Green's function allows
for associated formulations for solving \emph{surface boundary value problems},
i.e. elliptic surface PDE problems on surfaces with boundary and specified
conditions along such boundaries.

The rest of the paper is divided into seven sections: in
Section~\ref{sec:analytics}, we define a parametrix, recall some standard
definitions for surface differential operators, and discuss existence and
uniqueness properties of the Laplace-Beltrami problem. In
Section~\ref{sec:LBparametrix}, we introduce a parametrix and prove that it can
be used to solve the Laplace-Beltrami problem. After discussing this simpler
case, in Section~\ref{sec:existence} we discuss existence and uniqueness of
solutions to~\eqref{eq:surf_PDE}; extensions of the parametrix to the more general case are
addressed as well. Then, in Section~\ref{sec:BVP}, we discuss how our parametrix
can be used to solve surface elliptic boundary value problems. In
Section~\ref{sec:Numerics}, we describe a method for discretizing the integral
equations derived in the previous sections. In
Section~\ref{sec:Numeric_Exp}, we describe some numerical experiments that were
used verify the accuracy and efficiency of our method. Finally, in
Section~\ref{sec:conclussion}, we present some concluding remarks about this
method and present some ways to extend this method to other problems.

\section{Analytical preliminaries} \label{sec:analytics}
In this section, we give the definition of a parametrix and discuss the
motivation behind the definition and the typical uses of parametrices.
Afterwards, we define the necessary surface differential operators and give the
weak form of~\eqref{eq:surf_PDE}. Finally, we discuss the solvability of the
Laplace-Beltrami problem. The solvability of other elliptic PDEs on surfaces is
more complicated, and so is left to Section~\ref{sec:existence}.

\subsection{Parametrix basics}
\label{sec:parametix}

Let~$\cL$ be a second order elliptic operator. A function~$G(\bx,\bx')$ is said
to be a Green's function for~$\cL$ if 
\begin{equation}
    \cL G(\bx,\bx') = \delta(\bx-\bx')
\end{equation}
for each~$\bx'$ in the domain in question. For most such~$\cL$, the exact
Green's function~$G$ is not known in closed form; however, it is often possible
to find a function~$K = K(\bx,\bx')$ such that
\begin{equation}
  \cL K(\bx,\bx')=\delta(\bx-\bx') + R(\bx,\bx') \label{eq:parametrix_def}
\end{equation}
for all~$\bx'$. If~$R$ is reasonably well-behaved (as clarified below), then
such a function~$K$ can be useful for both analytical and numerical work. We now
define a particular class of~$K$'s with a notion of well-behaved remainder
functions~$R$.

\begin{definition}[Parametrix \cite{Chkadua2009}]
Let~$\cL$ be a second order differential operator. The function~$K(\bx,\bx')$ is
said to be a parametrix for~$\cL$ if~$\cL K(\bx,\bx')=\delta(\bx-\bx') +
R(\bx,\bx')$, where the remainder function~$R$ is an integrable function
of~$\bx$ e.g. the remainder function is~$\cO\lp
\left\|\bx-\bx'\right\|^{\alpha}\rp$ with~$\alpha>-2$ as~$\bx\to\bx'$ on a two dimensional surface.
\end{definition}

Parametrices exist for other kinds of differential operators, with different
conditions on the remainder function, but in this paper, we only consider second
order elliptic problems and so the definition is sufficient. It is clear that a
Green's function is a parametrix with remainder~$R=0$. For this reason, it is
also common to refer to a parametrix as an approximate Green's function.

Historically, parametrices are used to prove existence for equations with
non-analytic coefficients (see \cite{Garabedian1986}), though they have also
been used for computational methods.  Much of this work uses the method for
deriving parametrices that was introduced in \cite{hadamard1932probleme} and is
summarized in \cite{Hormander1994}. This classical parametrix is poorly suited
for our purposes because it is written in terms of induced geodesic coordinates.
We will therefore use a different parametrix, which will be much easier to
evaluate numerically and give a remainder function that is smooth enough for
our purposes.

\subsection{Surface basics}
\label{sec:surf_basics}

In this section, we introduce some standard notation and ideas for working with
differential equations defined on smooth surfaces. We define the standard
differential operators and the weak form of~\eqref{eq:surf_PDE}. Throughout this
section, we assume that the surface~$\Gamma$ is smooth and is given as the
boundary of an open and bounded domain~$\Omega\subset\bbR^3$. We shall discuss
the standard differential operators on a surface through the use of a single
(local) parameterization.
 \begin{definition}
   Let~$\Gamma$ be a smooth surface and let~$\bs y = \bs y(s,t)$ be a
   local parameterization with image~$\Gamma_{\by}$. We can make the
   following definitions.
 \begin{enumerate}
     \item The surface metric associated with~$\by$ is 
    \begin{equation}
  g = \begin{bmatrix}
    \partial_s\by \cdot \partial_s\by & \partial_s\by \cdot \partial_t\by \\
    \partial_s\by \cdot \partial_t\by & \partial_t\by \cdot \partial_t\by
    \end{bmatrix}.
\end{equation}
\item The surface gradient of a function~$f$ is 
\begin{equation}
      \gradg f(\bx)=\begin{bmatrix}\by_s & \by_t\end{bmatrix}g^{-1}\begin{bmatrix}
    \partial_s\\
    \partial_t
    \end{bmatrix}f.\label{eq:gradg}
\end{equation}
\item If~a tangential vector field~$\bs v$ is written as~$\bs v|_{\Gamma_{\bx}} =
v_s \by_s + v_t\by_t$, then the surface divergence is given by the formula
\begin{equation} \label{eq:divg}
  \divg \bs v(\by(s,t))=\frac{1}{\sqrt{\det g}}
  \begin{bmatrix}
    \partial_s& \partial_t
  \end{bmatrix}\sqrt{\det g}\begin{bmatrix}
    v_s\\
    v_t
  \end{bmatrix}.
\end{equation}
 \end{enumerate}
 \end{definition}
The above definitions allow us to write and enforce~\eqref{eq:surf_PDE} on the
space of smooth functions. For this reason it is called a strong form PDE. In
what follows, however, the weak form of~\eqref{eq:surf_PDE} will allow us to use
the tools of functional analysis to find conditions for existence of solutions
to~\eqref{eq:surf_PDE} and allow us to simplify subsequent proofs regarding
a one-to-one correspondence of~\eqref{eq:surf_PDE} and the associated integral equation. 
The weak form of a PDE is an equation over the Sobolev space
\begin{equation}
  \Ho :=\left\{\rho\in L^2(\Gamma) : \gradg \rho\in (L^2(\Gamma))^3\right\},
\end{equation}
which is a Hilbert space with inner product
    \begin{equation}
        (\sigma,\rho)_{\Ho} = \int_\Gamma \lp \bar \rho \, \sigma + \gradg\bar\rho\cdot\gradg\sigma \rp.
    \end{equation}    
With this space defined, we say that the weak form of~\eqref{eq:surf_PDE} is
  \begin{equation}
    \label{eq:weakform}
    \int_\Gamma \lp -a \gradg\bar \rho \cdot \gradg u +\bar \rho \, \bs b\cdot \gradg u+c \, \bar \rho \, u \rp= \int_\Gamma \bar \rho \, f,\qquad \text{for all } \rho \in \Ho. 
\end{equation}
 A function~$u\in\Ho$ that solves~\eqref{eq:weakform} is called a weak solution
 of~\eqref{eq:surf_PDE}. A simple integration-by-parts argument gives that a
 function~$u\in C^2(\Gamma)$ is a solution of~\eqref{eq:surf_PDE} if and only if
 it is a weak solution (see \cite{John1982}). 

We end this section with two more definitions: We shall let~$\bs n$ be
the outward unit normal to~$\Gamma$ and~$H=\frac12 \divg \bs n$ be the mean
curvature of~$\Gamma$.

\subsection{Existence and uniqueness for the Laplace-Beltrami problem}
\label{sec:LB_posed}

In this section, we will see that the Laplace-Beltrami problem is well-posed
when its domain and range are chosen correctly. 

By inspection, the Laplace-Beltrami operator~\eqref{eq:LB_prob} annihilates
constants. For this reason, the solution of the Laplace-Beltrami problem on a
closed surface can only be determined up to an additive constant. In order for
the equation to have a unique solution, we will therefore assume that the
solution~$u$ has zero mean. It is also necessary to restrict the range of the
Laplace-Beltrami problem: if we plug a constant~$\rho$ into the weak
form~\eqref{eq:weakform}, then~\eqref{eq:weakform} is unsolvable if~$f$ has a
non-zero mean. When we combine these observations, it becomes clear that we
should view the Laplace-Beltrami operator as a map from mean-zero functions to
themselves.

In order to include these restrictions in the weak form of~\eqref{eq:LB_prob}, we define the function space
\begin{equation}
  H^1_\text{mz}(\Gamma) := \left\{\rho\in \Ho :  \int_\Gamma \rho= 0\right\},
\end{equation}
which is also a Hilbert space with inner product~$\lp\cdot,\cdot\rp_{\Ho}$.
Since we need it below, we also define~$L^2_\text{mz}(\Gamma)$ as the subset
of~$L^2(\Gamma)$ with mean zero.

With these function spaces, the weak form of~\eqref{eq:LB_prob} becomes
\begin{equation}
    -\int_\Gamma \gradg u \cdot \gradg\rho = \int_\Gamma f\, \rho,  \qquad 
    \text{for all } \rho \in H^1_\text{mz}(\Gamma), \label{eq:LB_weakform}
\end{equation}
where~$f\in L^2_\text{mz}(\Gamma)$. As before, a smooth function~$u$ is a solution
of~\eqref{eq:surf_PDE} if and only if it is a solution
of~\eqref{eq:LB_weakform}. The following theorem gives that this weak form is
well posed. 

\begin{theorem}\label{thm:LBsolvable} If~$\Gamma$ is a smooth and closed
surface, then for every mean-zero right hand side~$f\in L^2_\text{mz} (\Gamma)$ there
exists a unique mean-zero solution~$u\in H^1_\text{mz}(\Gamma)$ of~\eqref{eq:LB_prob}.
\end{theorem}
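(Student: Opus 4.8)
The plan is to apply the Lax--Milgram theorem to the bilinear form $a(u,\rho) = \int_\Gamma \gradg u \cdot \gradg \bar\rho$ on the Hilbert space $H^1_\text{mz}(\Gamma)$, with the linear functional $\rho \mapsto -\int_\Gamma f\,\bar\rho$. Boundedness of both is immediate from Cauchy--Schwarz and the definition of the $\Ho$-inner product: $|a(u,\rho)| \le \|u\|_{\Ho}\|\rho\|_{\Ho}$, and $|\int_\Gamma f\bar\rho| \le \|f\|_{L^2(\Gamma)}\|\rho\|_{L^2(\Gamma)} \le \|f\|_{L^2(\Gamma)}\|\rho\|_{\Ho}$, using that $\Gamma$ is compact so $L^2(\Gamma)$ embeds continuously. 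Once Lax--Milgram applies, it yields a unique $u\in H^1_\text{mz}(\Gamma)$ with $a(u,\rho) = -\int_\Gamma f\bar\rho$ for all $\rho\in H^1_\text{mz}(\Gamma)$, which is exactly the weak form~\eqref{eq:LB_weakform}; and since the excerpt already records that a smooth weak solution solves~\eqref{eq:LB_prob} and conversely, this gives the theorem (with the understanding that ``solution'' here is the weak solution, or that additional elliptic regularity, which may be quoted, upgrades it).

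The crux is coercivity: I need a Poincar\'e-type inequality on the closed surface, namely that there is a constant $C>0$ with $\|\rho\|_{L^2(\Gamma)}^2 \le C \int_\Gamma |\gradg \rho|^2$ for all $\rho\in H^1_\text{mz}(\Gamma)$. Granting this, $a(\rho,\rho) = \int_\Gamma |\gradg\rho|^2 \ge \frac{1}{1+C}\big(\|\rho\|_{L^2(\Gamma)}^2 + \int_\Gamma|\gradg\rho|^2\big) = \frac{1}{1+C}\|\rho\|_{\Ho}^2$, giving coercivity on $H^1_\text{mz}(\Gamma)$. The surface Poincar\'e inequality is the main obstacle and is the one genuinely nontrivial ingredient; I would prove it by the standard compactness (Rellich) argument: suppose not, then there is a sequence $\rho_n\in H^1_\text{mz}(\Gamma)$ with $\|\rho_n\|_{L^2(\Gamma)}=1$ and $\int_\Gamma|\gradg\rho_n|^2 \to 0$; then $\{\rho_n\}$ is bounded in $\Ho$, so by the compact embedding $\Ho \hookrightarrow\hookrightarrow L^2(\Gamma)$ (valid since $\Gamma$ is a smooth compact manifold) a subsequence converges in $L^2(\Gamma)$ to some $\rho$ with $\|\rho\|_{L^2(\Gamma)}=1$; the vanishing of the gradients forces $\gradg\rho = 0$, hence $\rho$ is constant, and the mean-zero constraint passes to the limit forcing $\rho\equiv 0$, contradicting $\|\rho\|_{L^2(\Gamma)}=1$. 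The compact Sobolev embedding on a closed smooth surface can be cited from a standard reference (e.g., via local charts and a partition of unity reducing to the Euclidean Rellich--Kondrachov theorem).

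One remark on the mean-zero hypothesis on $f$: it is what makes the right-hand side functional well defined as a functional on $H^1_\text{mz}(\Gamma)$ without ambiguity, but it is not needed for Lax--Milgram per se; it was already motivated in the text as a necessary condition for solvability of the unrestricted weak form, and here it simply ensures consistency. I would close by noting uniqueness is automatic from Lax--Milgram (or directly: the difference of two solutions $w$ satisfies $\int_\Gamma|\gradg w|^2 = 0$, so $w$ is constant, and mean-zero forces $w=0$), and, if desired, invoke elliptic regularity on the smooth closed $\Gamma$ to promote the weak solution to a classical one when $f$ is smooth, completing the identification with~\eqref{eq:LB_prob}.
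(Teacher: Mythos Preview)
Your proposal is correct and follows essentially the same route as the paper: the paper defers to Theorem~\ref{thm:well-posed} (with $a=1$, $c=0$), whose proof applies Lax--Milgram on $H^1_\text{mz}(\Gamma)$ and obtains coercivity from the surface Poincar\'e inequality, exactly as you outline. The only difference is that the paper cites the Poincar\'e inequality from \cite{HebeyEmmanuel1999Naom} rather than sketching the Rellich compactness argument you give.
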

This theorem is an immediate consequence of a subsequent result in the
manuscript, Theorem~\ref{thm:well-posed}, with~$a= 1$ and~$c= 0$. It
is also well-known on its own. Theorem~\ref{thm:LBsolvable} can be extended to
give higher regularity on the solution~$u$. In~\cite{Warner2013}, it is shown
that it if the right hand side is in a higher order Sobolev space, then it is
possible to control the norm of the higher order derivatives of~$u$, but we omit
such discussions from this work for the sake of brevity.

\section{A parametrix for the Laplace-Beltrami problem}
\label{sec:LBparametrix}

In this section, we introduce a parametrix for the Laplace-Beltrami
problem~\eqref{eq:LB_prob} on a smooth and closed surface~$\Gamma$. We then use
that parametrix to convert~\eqref{eq:LB_prob} into an equivalent integral
equation. We save the discussion of general elliptic equations for
Section~\ref{sec:gen_elliptic}. 

The function~$$G(r)=\frac{1}{2\pi}\log r $$ is the free-space
Green's function for the Laplace operator in~$\bbR^2$. An analogous function
can be defined in~$\bbR^3$, as in~\cite{kropinski2014fast}:
\begin{equation}
  \label{eq:parametrixLB}
    \KLB(\bx,\bx') = \frac{1}{2\pi} \log \Vert\bx-\bx'\Vert
\end{equation}
for all~$\bx,\bx'\in \bbR^3$ with~$\bx\neq \bx'$. The smoothness of~$\Gamma$ will give that~$\KLB$ is a parametrix for the Laplace-Beltrami problem.

\begin{theorem}\label{thm:parametrixLB}
If~$\Gamma$ is a smooth closed surface, then~$\KLB$ is a candidate parametrix for the Laplace-Beltrami equation, i.e.
\begin{equation}
  \label{eq:diff_formLB}
    \LB \KLB(\bs x,\bs x') = \delta(\bx - \bx') +\RLB(\bx,\bx'),
\end{equation}
where the derivatives in~$\LB$ are taken with respect to~$\bx$. Furthermore,
for~$\bx \neq \bx'$, the remainder function is
\begin{equation}
  \label{eq:explicit_remainderLB}
  \RLB(\bs x,\bx')=\frac{1}{\pi }\left(\frac{ \bs n(\bx)\cdot \dx }{\dxnorm}\right)^2 - \frac{H(\bx)}{\pi } \frac{\bs n(\bx)\cdot \dx}{\dxnorm},
\end{equation}
 where~$\br = \bx - \bx'$ and
$r = \Vert \br \Vert$.
\end{theorem}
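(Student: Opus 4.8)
The plan is to compute $\LB \KLB(\bx,\bx')$ directly using a local parameterization of $\Gamma$ near a fixed target point $\bx$, treating the source $\bx'$ as a free parameter. The key observation is that $\KLB(\bx,\bx') = \frac{1}{2\pi}\log\|\bx-\bx'\|$ restricted to $\Gamma$ is, in a neighborhood of the singularity, a perturbation of the planar Green's function $G(r) = \frac{1}{2\pi}\log r$ on the tangent plane at $\bx$. Since $\Delta G = \delta$ in the plane, the Laplace–Beltrami operator applied to $\KLB$ should reproduce the delta function at $\bx=\bx'$, with the discrepancy between the intrinsic surface distance and the ambient chordal distance producing the integrable remainder $\RLB$. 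So I would split the argument into (i) establishing the delta-function contribution by a local/distributional analysis near $\bx'$, and (ii) computing the pointwise remainder for $\bx \neq \bx'$, where $\KLB$ is smooth and $\LB$ can be applied classically.

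For step (ii), which is the bulk of the work, I would use the ambient expression for $\LB$ acting on a function $w$ defined in a neighborhood of $\Gamma$: $\LB w = \Delta w - \bs n^T (\nabla^2 w)\bs n - 2H\,(\bs n\cdot\nabla w)$, where all derivatives are the ordinary Euclidean ones and $H = \frac12\divg\bs n$ is the mean curvature (using the sign convention implicit in the statement). Taking $w(\bx) = \frac{1}{2\pi}\log\|\bx-\bx'\|$, we have $\Delta w = 0$ away from $\bx'$, so the remainder comes entirely from the two correction terms. A direct computation gives $\nabla_\bx w = \frac{1}{2\pi}\frac{\br}{r^2}$ and $\nabla^2_\bx w = \frac{1}{2\pi}\!\left(\frac{I}{r^2} - \frac{2\,\br\br^T}{r^4}\right)$, so that $\bs n^T(\nabla^2 w)\bs n = \frac{1}{2\pi}\!\left(\frac{1}{r^2} - \frac{2(\bs n\cdot\br)^2}{r^4}\right)$ and $\bs n\cdot\nabla w = \frac{1}{2\pi}\frac{\bs n\cdot\br}{r^2}$. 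Substituting and simplifying,
\begin{equation}
\LB w = -\frac{1}{2\pi r^2} + \frac{(\bs n\cdot\br)^2}{\pi r^4} - \frac{H}{\pi}\frac{\bs n\cdot\br}{r^2}.
\end{equation}
The apparent difficulty is the leading $-\frac{1}{2\pi r^2}$ term, which is \emph{not} integrable on a surface and would spoil the parametrix property; the resolution is that this term is exactly cancelled when one accounts correctly for the intrinsic geometry — i.e. it is an artifact of naively extending $w$ off the surface, and a careful derivation using the surface metric $g$ from the local parameterization (expanding $\det g$, $g^{-1}$, and the chordal distance $r^2 = \|\bs y(s,t)-\bx'\|^2$ to sufficient order about the base point) shows the $1/r^2$ contributions from the $\Delta$-like and $\nabla^2$-like pieces combine to leave only the two terms in~\eqref{eq:explicit_remainderLB}. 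I expect this cancellation to be the main obstacle: it requires tracking second-order Taylor coefficients of the parameterization (the second fundamental form) and verifying the bookkeeping carefully. The payoff is that each surviving term is $\cO(r^{-1})$ as $\bx\to\bx'$ (since $\bs n(\bx)\cdot\br = \bs n(\bx)\cdot(\bx-\bx') = \cO(r^2)$ by smoothness of $\Gamma$), hence integrable with $\alpha = 1 > -2$, confirming $\KLB$ is a genuine parametrix.

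For step (i), the delta-function contribution, I would argue that near $\bx=\bx'$ the surface is a graph over its tangent plane, the chordal distance agrees with the geodesic/planar distance to leading order, and $\KLB|_\Gamma$ differs from the planar fundamental solution by a function that is $\cO(r^2\log r)$ — smooth enough that applying $\LB$ to the difference does not produce a distributional singularity. Since $\Delta_{\mathbb R^2} G = \delta$, testing $\LB\KLB$ against a smooth compactly supported $\rho$ and passing to the limit recovers $\rho(\bx')$, establishing the $\delta(\bx-\bx')$ term; alternatively one can cite the standard fact that a kernel asymptotic to the local fundamental solution of the principal part necessarily reproduces the delta. Assembling (i) and (ii) yields~\eqref{eq:diff_formLB} with $\RLB$ as claimed in~\eqref{eq:explicit_remainderLB}.
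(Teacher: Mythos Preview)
Your overall strategy matches the paper's: use the ambient identity
\[
\LB w \;=\; \Delta w \;-\; \bs n^T(\nabla^2 w)\bs n \;-\; 2H\,\bs n\cdot\nabla w
\]
to read off the remainder for $\bx\neq\bx'$, and handle the $\delta$-contribution separately by testing against smooth functions. The paper cites exactly this formula (from N\'ed\'elec) for step~(ii), and carries out step~(i) by a Green's-identity limit over shrinking disks $B_R(\bx')$, which is a concrete version of what you outline.

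However, there is a genuine computational slip in your step~(ii) that manufactures a nonexistent obstacle. You assert $\Delta w = 0$ for $w(\bx)=\frac{1}{2\pi}\log\|\bx-\bx'\|$, but $\log r$ is harmonic only in $\bbR^2$; in $\bbR^3$ one has $\Delta\log r = 1/r^2$, so $\Delta w = \frac{1}{2\pi r^2}$. This term cancels \emph{exactly} against the $\frac{1}{2\pi r^2}$ you correctly found in $\bs n^T(\nabla^2 w)\bs n = \frac{1}{2\pi}\bigl(\frac{1}{r^2}-\frac{2(\bs n\cdot\br)^2}{r^4}\bigr)$, leaving
\[
\LB w \;=\; \frac{1}{\pi}\left(\frac{\bs n\cdot\br}{r^2}\right)^2 \;-\; \frac{H}{\pi}\,\frac{\bs n\cdot\br}{r^2},
\]
which is precisely~\eqref{eq:explicit_remainderLB}. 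There is no residual $-\frac{1}{2\pi r^2}$ and no need for the second-fundamental-form bookkeeping you anticipate; the ``main obstacle'' you describe is an artifact of confusing the two- and three-dimensional Laplacians of $\log r$. Once this is corrected, your argument and the paper's coincide.
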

\begin{proof}

We begin by choosing a parameterization such that~$\bx'=\by(\bs 0)$, and
define~$C_R(\bx'):=\by(C_R(\bs 0))$ and~$B_R(\bx'):=\by(B_R(\bs 0))$ to be the image of
the circle and ball of radius~$R$ centered at~$\bx'$, respectively.
Since $\KLB$ is smooth for $\bx\neq \bx'$, showing~\eqref{eq:diff_formLB} is equivalent to showing that 
\begin{equation}
  \lim_{R\to 0}\int_{ B_R(\bx')} \LB\KLB(\cdot,\bx') \, \phi = \phi(\bx'),\label{eq:Param_limit}
\end{equation}
for all smooth test functions~$\phi$, where the Laplace-Beltrami operator is interpreted in a weak sense.

First, by the definition of the weak derivative, we have that
\begin{equation}
   \int_\Gamma \LB\KLB(\cdot,\bx')  \, \phi = \int_\Gamma \KLB(\cdot,\bx') \, \LB\phi.\label{eq:IBP_K}
\end{equation}
Next, for~$R>0$, manipulating the domains of integration,
using~\eqref{eq:IBP_K}, and applying Green's third identity gives that
\begin{equation}
  \begin{aligned}
        \int_{B_R(\bx')}\LB\KLB(\cdot,\bx')  \, \phi
        &= \int_\Gamma \KLB(\cdot,\bx') \, \LB\phi
         - \int_{\Gamma\setminus B_R(\bx')}  \LB\KLB(\cdot,\bx') \,  \phi \\
        &=\int_{B_R(\bx')} \KLB(\cdot,\bx') \, \LB \phi
        +  \int_{C_R(\bs x')}   \bs \nu\cdot \gradg \KLB(\cdot,\bx') \,  \phi\\
        &\qquad\qquad\qquad-\int_{C_R(\bs x')} \KLB(\cdot,\bx')\,\bs \nu\cdot \gradg \phi,
\end{aligned}
\end{equation}
where~$\bs \nu$ is the unit~\emph{binormal} vector to~$C_R(\bx')$, which is
defined to be tangent to~$\Gamma$, normal~$C_R(\bx')$, and to point away
from~$B_R(\bx')$. The first and third terms above will vanish as~$R\to 0$ due to
the smoothness of~$\phi$ and the weak logarithmic singularity in~$\KLB$. For the
second term, we add and subtract~$\phi(\bx')$, yielding:
\begin{multline}
  \int_{C_R(\bs x')}   \bs \nu \cdot \gradg \KLB(\cdot,\bx')  \, \phi = \int_{C_R(\bs x')}   \bs \nu\cdot \gradg \KLB(\cdot,\bx')  \, \phi \\
  + \int_{C_R(\bs x')}   \bs \nu\cdot \gradg \KLB(\cdot,\bx') \,   (\phi-\phi(\bx')).
\end{multline}
Since $\phi$ is smooth, and $\KLB$ has a logarithmic singularity, the second term
vanishes as~$R\to0$. We therefore have that
\begin{equation}
  \lim_{R\to 0}\int_{\Gamma\cap B_R(\bx')} \LB\KLB(\cdot,\bx')  \, \phi  = \phi(\bx') \lim_{R\to 0}\int_{\Gamma\setminus B_R(\bx')}   \bs \nu\cdot \gradg \KLB(\cdot,\bx').\label{eq:phi_lim}
\end{equation}
We now show that the right hand limit is equal to one. First, we note
that~$$\bs \nu=\frac{\bx-\bx'}{ \|\bx-\bx'\|}+\cO(R)$$  and define~$I_R$ by
\begin{equation}
  \begin{aligned}
    I_R &:=\int_{C_R(\bs x')} \bs \nu \cdot \gradg \KLB(\cdot ,\bs x') \\
        &=\int_{C_R(\bs x')} \lp\frac{\bx-\bx'}
          {\|\bx-\bx'\|}+\cO(R)\rp \cdot \frac{\bx\cdot \gradg\bx}{2\pi\Vert\bs x-\bs x'\Vert^2}.
  \end{aligned}
\end{equation}
We next parameterize~$C_R(\bx')$ by~$\by(\Phi_R(\theta))$
where~$\Phi_R(\theta):=R\begin{bmatrix}\cos\theta&\sin\theta\end{bmatrix}^T$.
Taylor expanding~$\by$ around the origin gives that
\begin{equation}
    I_R=\int_0^{2\pi} \left(\frac{g(\bs0)\Phi_R(\theta)}{R}+\cO(R)\right)^T \frac{g^{-1}(\bs0) \, \Phi_R(\theta) + \cO(R^2)}{2\pi R^{2}+O(R^{3})} R \, d\theta+ \cO(R).
\end{equation}
Some more simplification gives 
\begin{equation}
    I_R=(2\pi)^{-1}\int_0^{2\pi} R^{-2} \, \Phi_R(\theta)^T g(\bs0) \, g^{-1}(\bs0) \, 
    \Phi_R(\theta) \, d\theta+\cO(R).
\end{equation}
Evaluating this integral gives
\begin{equation}
    I_R= 1+\cO(R),
\end{equation}
which, when combined with~\eqref{eq:phi_lim}, proves~\eqref{eq:Param_limit} and
so that~$K_{LB}$ is a candidate to be a parametrix.

The expression for the remainder~\eqref{eq:explicit_remainderLB} is obtained
from the following formula for the surface Laplacian of a smooth function
defined in a neighborhood of~$\Gamma$ (see~\cite{nedelec2001}), which will be
valid wherever~$\KLB$ is smooth, i.e. for $\bx\neq\bx'$:
\begin{equation}
  \label{eq:diff_remainderLB}
  \begin{aligned}
    \RLB(\bx,\bx') &=\LB \KLB(\bx,\bx') \\
    &=   \Delta \KLB(\bx,\bx') - \partial_{\bs n(\bx)}^2 \KLB(\bx,\bx') - 2H(\bx) \partial_{\bs n(\bx)} \KLB(\bx,\bx').
  \end{aligned}
\end{equation}
\end{proof}

We now verify that~$\KLB$ is indeed a parametrix for the Laplace-Beltrami
problem by showing that~$\RLB$ is integrable.
\begin{theorem}\label{thm:remainderLB}
If~$\Gamma$ is a smooth surface, then~$\RLB(\bx,\bx')=\cO(1)$ as~$\bx'\to\bx$ and~$\KLB$ is a parametrix for the Laplace-Beltrami problem.
\end{theorem}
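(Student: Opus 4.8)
The plan is to read the required estimate directly off the explicit formula~\eqref{eq:explicit_remainderLB} for $\RLB$. Since $\Gamma$ is smooth (and compact), the mean curvature $H$ is bounded on $\Gamma$, so it suffices to show that the scalar quantity $\bs n(\bx)\cdot\br / r^2$ stays bounded as $\bx'\to\bx$ along $\Gamma$; then the quadratic term and the linear term in~\eqref{eq:explicit_remainderLB} are both $\cO(1)$, giving $\RLB(\bx,\bx')=\cO(1)$.

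To control that quotient I would fix $\bx\in\Gamma$, choose a smooth regular local parameterization $\by$ with $\by(\bs 0)=\bx$, and write a nearby surface point as $\bx'=\by(\bs p)$ with $\bs p=(s,t)\to\bs 0$. Taylor expanding $\by$ about the origin gives $\br=\bx-\bx'=-D\by(\bs 0)\,\bs p+\cO(|\bs p|^2)$. This has two consequences. First, because $D\by(\bs 0)$ has full column rank, $r=\|\br\|$ is comparable to $|\bs p|$, i.e. $c_1|\bs p|\le r\le c_2|\bs p|$ for $\bs p$ small. Second, the first-order part $D\by(\bs 0)\,\bs p$ lies in the tangent plane $T_\bx\Gamma$ and is therefore annihilated by $\bs n(\bx)$, so that $\bs n(\bx)\cdot\br = \bs n(\bx)\cdot\bigl(-\tfrac12 D^2\by(\bs 0)[\bs p,\bs p]\bigr)+\cO(|\bs p|^3)=\cO(|\bs p|^2)$ --- this is just the statement that a smooth surface deviates from its tangent plane quadratically. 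Combining the two, $\bs n(\bx)\cdot\br / r^2$ has numerator and denominator both of size $|\bs p|^2$, hence is $\cO(1)$ as $\bx'\to\bx$, and therefore $\RLB(\bx,\bx')=\cO(1)$.

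Finally, to upgrade $\KLB$ from a \emph{candidate} parametrix to a genuine parametrix, I would invoke the distributional identity $\LB\KLB(\bx,\bx')=\delta(\bx-\bx')+\RLB(\bx,\bx')$ established in Theorem~\ref{thm:parametrixLB}, together with the definition of a parametrix: a bounded remainder is in particular $\cO(\|\bx-\bx'\|^{\alpha})$ with $\alpha=0>-2$, hence an integrable function of $\bx$ on the two-dimensional surface $\Gamma$, so the definition is met.

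The only non-trivial step is the differential-geometric estimate $\bs n(\bx)\cdot(\bx-\bx')=\cO(r^2)$ and the comparison $r\asymp|\bs p|$; these follow from the smoothness of $\Gamma$ and regularity of the parameterization, with compactness of $\Gamma$ ensuring the implied constants can be taken uniform in $\bx$ if desired. Everything else is immediate algebra on~\eqref{eq:explicit_remainderLB}.
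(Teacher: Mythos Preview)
Your argument is correct and is in fact the most direct way to obtain the $\cO(1)$ bound: reduce to controlling $\bs n(\bx)\cdot\br/r^2$, then use a local chart and Taylor expansion to see that the numerator is $\cO(|\bs p|^2)$ (tangent-plane cancellation) while $r\asymp|\bs p|$ (regularity of the chart). The paper proceeds differently. Instead of working in a two-parameter chart, it restricts to one-dimensional normal-section curves $\gamma$ through $\bx$ and, via an arclength Taylor expansion, actually \emph{computes} the limit $\lim_{\bx'\to\bx}\bs n(\bx)\cdot\br/r^2=\kappa_\gamma/2$ along each such curve; plugging this into~\eqref{eq:explicit_remainderLB} yields the explicit direction-dependent limiting value $-\bigl(H^2\sin^2 2\theta+\kappa_G\cos^2 2\theta\bigr)/(4\pi)$. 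Your route is shorter and entirely sufficient for the theorem as stated; the paper's route buys extra information, namely the precise asymptotic form of $\RLB$ near the diagonal and the fact that the limit depends on the angle of approach (so $\RLB$ is bounded but not continuous). That formula is reused later in the design of the singular quadrature (see~\eqref{eq:asymptotic_form}).
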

\begin{proof}
By Theorem~\ref{thm:parametrixLB}, it is enough to show the first statement.

Let~$\gamma$ be a curve along~$\Gamma$, passing through~$\bx$, obtained from
the intersection of a plane normal to~$\Gamma$ at~$\bx$ with~$\Gamma$. We shall show
that~$\RLB(\bx,\gamma(s))$ is a continuous function of~$s$, where~$\gamma(s)$ is
an arclength parameterization of~$\gamma$.

We suppose that~$\gamma(\tilde s)=\bs x$. As an intermediate step, we Taylor
expand~$\gamma$ and compute
\begin{equation}
    \lim_{s\to \tilde s} \frac{\bs n(\bx) \cdot (\gamma(\tilde s)-\gamma(s))}{\Vert\gamma(\tilde s)-\gamma(s)\Vert^2} = \lim_{s\to \tilde s} \frac{\bs n(\bx) \cdot \lp(\tilde s-s) \gamma'(\tilde s) + \frac{(\tilde s-s)^2}2\gamma''(\tilde s) + \cO((\tilde s-s)^3)\rp}{(\tilde s-s)^2 \Vert\gamma'(s)\Vert^2 + \cO((\tilde s-s)^4)}.
\end{equation}
Since~$\gamma$ is parameterized by arclength,~$\Vert\gamma'\Vert=1$,~$\bs n(\bx)\cdot \gamma'(\tilde s)=0$, and~$\bs n(\bx)\cdot \gamma''(\tilde s)=\kappa_\gamma$, where~$\kappa_\gamma$ is the curvature of~$\gamma$ at~$\bs x$. The limit thus becomes
\begin{equation}
    \lim_{s\to \tilde s} \frac{\bs n(\bx) \cdot (\gamma(\tilde s)-\gamma(s))}{\Vert\gamma(\tilde s)-\gamma(s)\Vert^2} = \frac{\kappa_\gamma}2.
\end{equation}
Inserting this limit into~\eqref{eq:explicit_remainderLB}, we obtain
\begin{equation}
  \begin{aligned}
     \lim_{s\to \tilde s}\RLB(\gamma(\tilde s),\gamma(s))
     &=\lim_{s\to \tilde s}\left[2\left(\frac{\bs n(\bx) \cdot (\gamma(\tilde s)-\gamma(s))}{\Vert\gamma(\tilde s)-\gamma(s)\Vert^2}\right)^2 - 2H(\bx) \frac{\bs n(\bx) \cdot (\gamma(\tilde s)-\gamma(s))}{\Vert\gamma(\tilde s)-\gamma(s)\Vert^2}\right] \\
     &=\frac{\kappa_\gamma^2}2-H(\bx)\kappa_\gamma.
  \end{aligned}
\end{equation}
The curvature~$\kappa_\gamma$ will vary between the principal
curvatures~$\kappa_1$ and~$\kappa_2$, depending on the angle of approach.
Specifically, if~$\theta$ is the angle between the first principal direction
and~$\gamma'(\tilde s)$, then 
\begin{equation}
    \kappa_\gamma(\theta) = \kappa_1 \cos^2(\theta) + \kappa_2\sin^2(\theta).
\end{equation}
Using this expression for the curvature and writing the principal curvatures in terms of the mean and Gaussian curvatures gives that
\begin{equation}
    \lim_{s\to \tilde s}\RLB(\gamma(\tilde s),\gamma(s))= -\frac{H^2(\bx)\sin^2(2\theta)+\kappa_G(\bx)\cos^2(2\theta)}{4\pi }, \label{eq:limiting_behaviour}
\end{equation}
where~$\kappa_G(\bx)$ is the Gaussian curvature of the surface at~$\bx$. Since
the surface is smooth, this limit is bounded, and therefore~$\RLB(\bx,\bx')$
is~$\mathcal O(1)$ as~$\bx'\to\bx$.
\end{proof}

We shall pause here to note that the above proof gives that the remainder
function is not smooth as~$\bx'\to\bx$ because~$R_{LB}(\bx,\bx')$ depends on the
direction of approach. A plot of the remainder kernel along a toroidal like
object, for a fixed~$\bx$, is shown in Figure~\ref{fig:remainderLB}. 

\begin{figure}[t]
    \centering
    \includegraphics[width=0.75\textwidth]{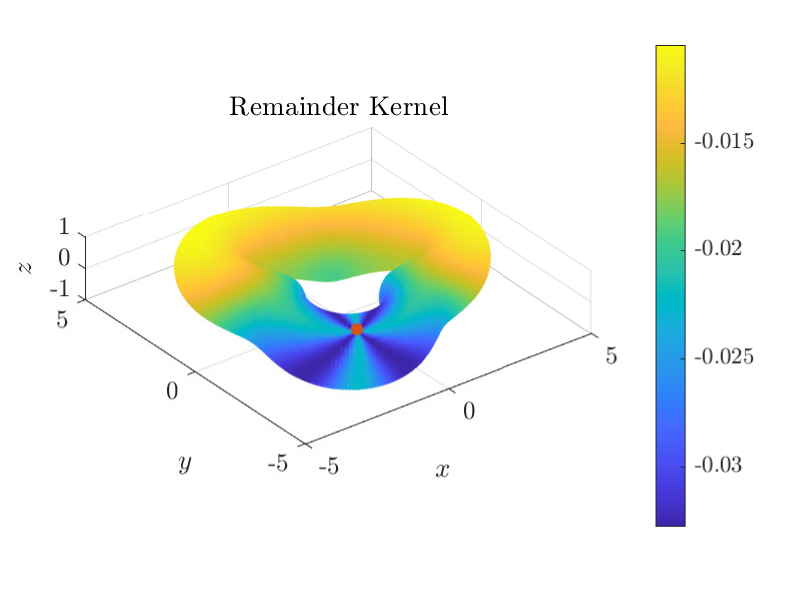}
    \caption{This plot shows the remainder kernel~$\RLB(\bx,\cdot)$ for the
     Laplace-Beltrami problem on an example surface. We see that it is bounded, 
     but not smooth around~$\bx$ (red dot).}
    \label{fig:remainderLB}
\end{figure}

We can now show that using this parametrix yields a second-kind
integral equation for the Laplace-Beltrami problem along general smooth
surfaces.

\begin{theorem}
  \label{thm:second_kindLB}
The integral equation
\begin{equation}
  \label{eq:IELB}
  \sigma(\bs x) + \int_\Gamma \RLB(\bs x,\bs x') \, \sigma(\bx') \, da(\bx') =  f(\bx), \qquad \bx \in \Gamma,
\end{equation}
is a second-kind Fredholm integral equation on~$L^2(\Gamma)$. Furthermore,
if~$f\in L^2_\text{mz}(\Gamma)$ and~$\sigma$ is the solution of~\eqref{eq:IELB}, then
the function   
\begin{equation}
  \label{eq:representationLB}
  \begin{aligned}
  u(\bx)&=\mathcal\KLB[\sigma](\bx) \\
  &=\int_\Gamma \KLB(\bx,\bx') \, \sigma(\bx') \, da(\bx')
  \end{aligned}
\end{equation}
is the unique mean-zero solution of the Laplace-Beltrami problem.
\end{theorem}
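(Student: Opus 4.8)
The plan is to establish the two claims separately: first that \eqref{eq:IELB} is Fredholm second-kind on $L^2(\Gamma)$, and then that the operator $\KLB$ maps its solution to the unique mean-zero solution of the Laplace-Beltrami problem. For the first claim, I would invoke Theorem~\ref{thm:remainderLB}, which gives that $\RLB(\bx,\bx') = \cO(1)$ as $\bx' \to \bx$; since $\Gamma$ is a smooth closed (hence compact) surface and $\RLB$ is continuous off the diagonal with a bounded limit on the diagonal, $\RLB \in L^\infty(\Gamma \times \Gamma) \subset L^2(\Gamma\times\Gamma)$, so the associated integral operator $\RopLB$ is Hilbert--Schmidt and therefore compact on $L^2(\Gamma)$. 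Writing the equation as $(\mathcal I + \RopLB)\sigma = f$ then exhibits it as a second-kind Fredholm equation.

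For the second claim, I would argue in two directions. First, suppose $\sigma$ solves \eqref{eq:IELB} with $f \in L^2_{\text{mz}}(\Gamma)$, and set $u = \KLB[\sigma]$. Applying $\LB$ to $u$ and using the distributional identity \eqref{eq:diff_formLB} from Theorem~\ref{thm:parametrixLB} --- i.e.\ $\LB \KLB(\bx,\bx') = \delta(\bx-\bx') + \RLB(\bx,\bx')$ --- gives
\begin{equation}
  \LB u(\bx) = \sigma(\bx) + \int_\Gamma \RLB(\bx,\bx')\,\sigma(\bx')\,da(\bx') = f(\bx),
\end{equation}
so $u$ solves the Laplace-Beltrami equation (this should be made rigorous by pairing against a test function and using Theorem~\ref{thm:parametrixLB}, exchanging the order of integration, which is justified by the $L^2 \times L^2$ kernel bound). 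Existence of a solution $\sigma$, and hence of such a $u$, follows once we verify solvability of \eqref{eq:IELB}: since the operator is $\mathcal I + (\text{compact})$, the Fredholm alternative reduces existence for every $f \in L^2_{\text{mz}}(\Gamma)$ to uniqueness on that subspace, which I address below. To get the mean-zero normalization of $u$, note that if $u = \KLB[\sigma]$ does not have zero mean we may subtract its average; but in fact one expects $\int_\Gamma u = 0$ to come out automatically, or can be arranged, because $\LB u = f$ with $f$ mean-zero only determines $u$ up to a constant and the representation should be shown to pick the mean-zero representative --- this is a point to handle carefully.

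The reverse direction and uniqueness use Theorem~\ref{thm:LBsolvable}: given $f \in L^2_{\text{mz}}(\Gamma)$ there is a unique mean-zero weak solution $u$, and by the well-posedness its preimage structure under $\KLB$ must be consistent, so the $\sigma$ produced by the integral equation yields exactly that $u$. For uniqueness of $\sigma$ in the Fredholm alternative, I would show that a homogeneous solution $\sigma_0$ (solving $\sigma_0 + \RopLB \sigma_0 = 0$) produces $u_0 = \KLB[\sigma_0]$ with $\LB u_0 = 0$, hence $u_0$ is constant by the closed-surface maximum principle / Theorem~\ref{thm:LBsolvable}; one then back-substitutes to conclude $\sigma_0$ lies in a one-dimensional space, and checks that restricting the right-hand side to $L^2_{\text{mz}}$ and $u$ to mean-zero kills this degree of freedom, giving a genuine bijection. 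The main obstacle I anticipate is exactly this last bookkeeping: carefully tracking the one-dimensional null space of $\mathcal I + \RopLB$ (coming from $\LB$ annihilating constants), identifying the corresponding solvability constraint on $f$, and confirming it coincides with the mean-zero condition --- essentially a Fredholm-alternative computation of the cokernel, which requires understanding the adjoint kernel $\RLB(\bx',\bx)$ and showing the left null space is spanned by (something proportional to) the constant function.
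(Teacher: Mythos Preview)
Your core approach matches the paper's: you invoke boundedness of $\RLB$ to get a Hilbert--Schmidt (hence compact) operator for the Fredholm claim, and you verify $\LB u = f$ by pairing against a test function, using the distributional identity from Theorem~\ref{thm:parametrixLB}, and exchanging integrals via Fubini. That is exactly what the paper does.

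Where you diverge is in scope. The theorem is \emph{conditional}: it assumes ``$\sigma$ is the solution of~\eqref{eq:IELB}.'' You do not need to establish solvability of the integral equation or analyze the null space of $\mathcal I + \RopLB$. In fact, the paper explicitly leaves the dimension of that null space as an open \emph{conjecture} immediately after this theorem --- so your proposed Fredholm-alternative argument to pin it down would, if it worked, prove something the authors could not. Drop all of the null-space bookkeeping; it is not part of this theorem and is not currently known in general.

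Your worry about whether $u = \KLB[\sigma]$ is automatically mean-zero is legitimate, and the paper's proof is casual on this point too: it shows $u$ is \emph{a} weak solution and then simply declares it ``the desired solution,'' without verifying $\int_\Gamma u = 0$. The clean fix is to observe that once $\LB u = f$ holds weakly, Theorem~\ref{thm:LBsolvable} gives a unique mean-zero solution, and $u$ differs from it by at most a constant; if the representation does not already have zero mean, subtract it. You should state this explicitly rather than hope it comes out automatically.
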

\begin{proof}
We begin by noting that the remainder function is bounded. The integral 
operator 
\begin{equation}
    \RopLB [\sigma] (\bs x)=\int_\Gamma \RLB(\bs x,\bs x') \, \sigma(\bx') 
    \, da(\bx')
\end{equation}
is therefore a Hilbert-Schmidt operator and so is compact as a map
from~$L^2(\Gamma)$ to itself by Theorem~{0.45} in~\cite{Folland1995}. The
equation~\eqref{eq:IELB} is therefore a second-kind Fredholm integral equation.

We shall now verify that when~$\sigma$ solves~\eqref{eq:IELB}, the formula
in~\eqref{eq:representationLB} gives a weak solution of~\eqref{eq:LB_prob},
i.e., that for all~$\phi \in C^2(\Gamma)$
\begin{equation}
  \label{eq:wkLB}
 -\int_\Gamma \gradg u \cdot\gradg\phi 
   = \int_\Gamma f\,  \phi.
\end{equation}
Integrating by parts in the above expression and plugging in the representation
for~$u$ gives us that 
\begin{equation}
  \int_\Gamma u \, \LB\phi 
    =  \int_\Gamma \left( \int_\Gamma \KLB(\bs x,\bs x') \, \sigma(\bx') 
    \, da(\bx')\right) \LB\phi(\bx) \, da(\bx).
\end{equation}
We note that since~\eqref{eq:IELB} is Fredholm second-kind,~$\sigma$ will be
in~$L^2_\text{mz}(\Gamma)$. Since~$\KLB$ is also integrable, we may use
Fubini's theorem to find that
\begin{equation}
  \int_\Gamma u \, \LB\phi 
   =\int_\Gamma \sigma(\bx') \int_\Gamma \KLB(\bs x,\bs x') \, 
    \LB\phi(\bx) \, da(\bx) da(\bx').
\end{equation}
We now wish to bring the derivatives on~$\phi$ onto~$\KLB$. We do this by integrating by parts in the weak sense. Since we have computed the weak derivative of~$\KLB$ in Theorem~\ref{thm:parametrixLB}, we have that
\begin{equation}
  \begin{aligned}
    \int_\Gamma u \, \LB\phi  &= \int_\Gamma \sigma(\bx') \int_\Gamma \LB 
  \KLB(\bx,\bx') \, \phi(\bx) \, da(\bx) da(\bx') \\
  &=\int_\Gamma \sigma(\bx') \left(\phi(\bx')+\int_\Gamma \RLB(\bx,\bx') \, \phi(\bx) \, da(\bx)  \right) da(\bx').
  \end{aligned}
\end{equation}
Another application of Fubini's theorem combined with a change of variables gives
\begin{equation}
  \begin{aligned}
    \int_\Gamma u \, \LB\phi &= \int_\Gamma\phi(\bx) \left(\sigma(\bx) +\int_\Gamma \RLB(\bx,\bx') \, \sigma(\bx') \, da(\bx') \right) da(\bx) \\
    &= \int_\Gamma \phi \, f.
  \end{aligned}
\end{equation}
Thus~$\LB u=f$ in the weak sense, and so~$u$ is the desired solution.
\end{proof}

The integral equation in~\eqref{eq:IELB} involves the
operator~$\cI+\mathcal\RLB$, which is the composition of the Laplace-Beltrami
operator with~$\mathcal\KLB$. It is therefore \emph{not} solvable for any right
hand side with non-zero mean. Since~\eqref{eq:IELB} is Fredholm second-kind, it
must therefore have at least a one-dimensional null-space consisting of functions
that are mapped to constants by~$\mathcal\KLB$. 
For numerical reasons, it is often advantageous to modify this
equation so that it is uniquely invertible for \emph{any} right-hand side, and
handle the mean-zero constraint separately and explicitly.
We can do this by adding the term~$\int_\Gamma \sigma$. 

This kind of
one-dimensional update is discussed in detail~\cite{Rachh2015}. In short, this
update makes the solution of~\eqref{eq:IELB} unique by fixing the projection of
the solution onto the null vector, which we denote by~$\sigma_n$. This method is known to remove the
null-space of the integral equation provided that~$\sigma_n$ has a non-zero
mean. While there is no formula for~$\sigma_n$ on general surfaces, some
calculus shows that if~$\Gamma$ is a sphere, then~$\sigma_n$ is a constant
function and if~$\Gamma$ is a torus, then~$\sigma_n = \alpha-\rho$,
where~$\rho = \rho(\bx)$ is the distance from~$\bx$ to the axis of rotation. The
constant~$\alpha$ depends on the inner and outer radii of the torus, but is not
the mean of~$\sqrt{x^2+y^2}$ over~$\Gamma$.

We now make the following conjecture:
\begin{conjecture}
    The integral equation~\eqref{eq:IELB} has only a one-dimensional null space on all smooth surfaces.
\end{conjecture}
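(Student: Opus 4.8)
The plan is to show that the null space of the operator $\cI + \RopLB$ coincides with the set of densities $\sigma$ such that $\KopLB[\sigma]$ is a constant function on $\Gamma$, and then to argue that this set is one-dimensional. The first inclusion is essentially contained in the proof of Theorem~\ref{thm:second_kindLB}: running that argument in reverse, if $\sigma \in L^2(\Gamma)$ satisfies $\sigma + \RopLB[\sigma] = 0$, then for every $\phi \in C^2(\Gamma)$ one has $\int_\Gamma \KopLB[\sigma]\, \LB \phi = 0$, so $u = \KopLB[\sigma]$ is a weak solution of $\LB u = 0$. By Theorem~\ref{thm:LBsolvable} (applied to the mean-zero part of $u$), $u$ must be constant. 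Conversely, if $\KopLB[\sigma]$ is constant, then differentiating the parametrix identity $\LB \KLB = \delta + \RLB$ against $\sigma$ shows $\sigma + \RopLB[\sigma]$ is the image of a constant under $\LB$, hence zero. So the conjecture reduces to: \emph{the map $\sigma \mapsto \KopLB[\sigma]$ has a one-dimensional kernel modulo constants}, equivalently $\KopLB : L^2(\Gamma) \to H^1(\Gamma)/\mathbb{R}$ composed with the quotient has a one-dimensional kernel.

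Next I would try to pin down $\KopLB$ as a Fredholm operator of index zero, so that the dimension of its kernel equals the codimension of its range. Because $\KLB$ is a parametrix, $\LB \circ \KopLB = \cI + \RopLB$ with $\RopLB$ compact; this says $\KopLB$ is a right parametrix (up to compact error) for $\LB$. Combined with the known mapping properties of $\LB$ on a closed surface — it is Fredholm of index zero from $H^{s+2}_{\text{mz}}$ to $H^s_{\text{mz}}$, with one-dimensional cokernel (the constants) when considered on all of $H^s$ — one expects $\KopLB$ itself to be Fredholm. The cleanest route is probably to establish that $\KopLB$ is, modulo smoothing operators, an elliptic pseudodifferential operator of order $-2$ on $\Gamma$: its principal symbol should be $|\xi|^{-2}$ (the same as the plane Laplacian's Green's function, which is the whole point of the construction), and an order $-2$ elliptic $\Psi$DO on a compact manifold is Fredholm between the appropriate Sobolev spaces. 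Once $\KopLB$ is Fredholm, its index can be computed — or bounded — by deforming to the genuine Laplace–Beltrami Green's operator $G_{\LB}$, which differs from $\KopLB$ by an operator that is at worst order $-2$ and, one hopes, compact relative to it; since $G_{\LB}$ has exactly a one-dimensional kernel (the constants map to zero) and a one-dimensional cokernel, the index is zero and we would need the kernel to be exactly one-dimensional, not larger.

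The hard part will be this last quantitative step: Fredholmness only gives that $\dim \ker(\cI + \RopLB) = \dim \operatorname{coker}(\cI + \RopLB) < \infty$, and the discussion preceding the conjecture already establishes this dimension is \emph{at least} one. Ruling out a kernel of dimension two or more is genuinely delicate because $\RLB$ is not smooth across the diagonal (Theorem~\ref{thm:remainderLB} and Figure~\ref{fig:remainderLB}), so one cannot simply invoke a classical smooth-$\Psi$DO index theorem off the shelf; the remainder is only bounded, lying in a class with a jump discontinuity in the co-normal variable. One possible way around this: show directly that $\operatorname{coker}(\cI + \RopLB)$ is exactly one-dimensional by identifying it with solutions $\psi$ of the adjoint equation $\psi + \RopLB^*[\psi] = 0$, i.e. $\KopLB^*[\psi]$ harmonic hence constant, and then proving $\KopLB^* = \KopLB$ (the kernel $\KLB$ is symmetric) so that kernel and cokernel have the \emph{same} dimension $d$; this alone does not finish the argument, but it reduces everything to showing that $\KopLB[\sigma] = \text{const}$ forces $\sigma$ to be a scalar multiple of a single fixed density $\sigma_n$. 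That final uniqueness claim is really a statement that the only distributional "layer potentials" with the logarithmic kernel that produce a constant on $\Gamma$ form a line; I would attempt it by a potential-theoretic uniqueness argument in $\mathbb{R}^3$ — if $\int_\Gamma \KLB(\bx,\bx')\sigma(\bx')\,da(\bx') = c$ for all $\bx \in \Gamma$, study the volume potential $w(\bx) = \int_\Gamma \KLB(\bx,\bx')\sigma(\bx')\,da(\bx')$ off the surface, which satisfies $\Delta w = \sigma\,\delta_\Gamma$ in $\mathbb{R}^3$ in a suitable sense — but the three-dimensional nature of the ambient $\log$ kernel (which is \emph{not} the $3$D fundamental solution) makes this nonstandard, and I expect this to be the genuine obstacle, which is presumably why the authors leave it as a conjecture rather than a theorem.
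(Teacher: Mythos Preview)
The statement you are attempting to prove is a \emph{conjecture} in the paper, not a theorem: the authors do not provide a proof. What the paper offers in lieu of a proof is three pieces of supporting evidence: (i) the explicit computation on the sphere, where $\RLB = 1/|\Gamma|$ and the null space is visibly the constants; (ii) numerical evidence that the discretized operator has exactly one zero singular value on every surface tested; and (iii) the observation that the integral equation is invariant under dilation of $\Gamma$, so the sign change of the logarithmic kernel at $\|\bx-\bx'\|=1$ cannot be the source of an extra null vector. None of these constitutes a proof for general smooth surfaces.

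Your reduction --- that $\sigma$ lies in the null space of $\cI + \RopLB$ if and only if $\KopLB[\sigma]$ is constant --- is correct and is exactly the mechanism the paper uses informally when it says the null space ``consists of functions that are mapped to constants by $\KopLB$.'' Your Fredholm/index strategy is a reasonable line of attack, and your observation that the symmetry of $\KLB$ forces $\dim\ker = \dim\operatorname{coker}$ is valid but, as you note, does not by itself pin the dimension to one. The obstacle you identify at the end --- that the ambient logarithmic kernel in $\bbR^3$ is not a Green's function for any three-dimensional PDE, so standard off-surface potential-theoretic uniqueness arguments do not apply --- is real, and your closing sentence is exactly right: this is why the authors state it as a conjecture. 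Your proposal is an honest and well-structured outline of where a proof would have to go and where it currently gets stuck, but it is not a proof, and neither is anything in the paper.
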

This conjecture is based on a few observations:
\begin{enumerate}
   \item When~$\Gamma$ is a sphere, the work in~\cite{kropinski2014fast} showed
that~$R_{LB} = 1/ |\Gamma|$. The integral equation~\eqref{eq:IELB} thus
becomes~$\sigma -\hat\sigma=f$, where~$\hat \sigma$ is the mean of~$\sigma$,
which clearly has only a one-dimensional null space for any sized sphere. 
\item The associated discretization of the integral equation had only one zero
singular value for all of the surfaces considered in subsequent numerical
examples (Section~\ref{sec:Numeric_Exp}), which include examples of genus
0 and genus 1.
\item The integral equation~\eqref{eq:IELB} is invariant under surface dilation.
By this we mean that if~$a\Gamma=\left\{a\bx \text{ such that }
\bx\in\Gamma\right\}$, then~$\mathcal{R}_{\text{LB},a\Gamma}[\sigma](a\bx)=
\mathcal{R}_{\text{LB},\Gamma}[\sigma](\bx)$ for all~$a>0$. This implies that the sign
change in the parametrix at~$\Vert\bx-\bx'\Vert=1$ does not introduce a null
space, since~$\Gamma$ can be assumed to have a diameter less than 1.
\end{enumerate}
In addition to being interesting in and of itself, proving the above
conjecture would have computational ramifications, as the discretized
linear systems could be solved by any preferred means (iteratively,
directly, etc.) without worry of unknown rank deficiencies.

\section{Elliptic PDEs on surfaces}
\label{sec:existence}

The above parametrix method may be extended to a broader class of surface
elliptic equations of the form~\eqref{eq:surf_PDE}. In this section, we
introduce this class of equations and discuss solvability conditions and
introduce our parametrix. For simplicity, we shall assume that~$\bs b = 0$.
Introducing a non-zero~$\bs b$ only requires minor changes, and is discussed in
Section~\ref{sec:nonzerob}.

\subsection{Existence of solutions}
In this section, we shall discuss some aspects of existence and uniqueness
of~\eqref{eq:surf_PDE}; a thorough investigation of conditions under
which~\eqref{eq:surf_PDE} is well-posed is beyond the scope of this paper.
Instead, we present Theorems~\ref{thm:well-posed},~\ref{thm:Helm_posed}, and~\ref{thm:closed_eval} as
examples of such conditions, and the results which they lead to.

In order to prove these theorems,
we shall need following two well-known theorems.

\begin{theorem}[Poincar\'e's inequality for surfaces (Theorem 2.10 in~\cite{HebeyEmmanuel1999Naom})]
If~$\Gamma$ is a smooth surface then there exists a constant~$C_\Gamma>0$ such
that for every function~$u\in H^1(\Gamma)$,
\begin{equation}
        \int_\Gamma |u-\hat u|^2 \leq C_\Gamma \int_\Gamma \Vert\gradg u\Vert^2,\label{eq:poincare}
    \end{equation}
     where~$\hat u$ is the mean of~$u$.
    \end{theorem}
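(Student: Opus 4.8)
The plan is to argue by contradiction, using a compactness (Rellich--Kondrachov) argument on the compact surface~$\Gamma$ --- recall that, as assumed throughout the paper,~$\Gamma$ is smooth and bounds a bounded domain, hence is a closed (compact, boundaryless) manifold. Suppose the inequality fails. Then for each~$n\in\mathbb N$ there is~$u_n\in\Ho$ with~$\int_\Gamma|u_n-\hat u_n|^2 = 1$ while~$\int_\Gamma\|\gradg u_n\|^2 < 1/n$. Replacing~$u_n$ by~$u_n-\hat u_n$ changes neither integral (the surface gradient annihilates constants) and produces functions with zero mean and unit~$L^2$ norm; so we may assume~$\hat u_n = 0$ and~$\|u_n\|_{L^2(\Gamma)} = 1$. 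In particular~$\|u_n\|_{\Ho}^2 = \|u_n\|_{L^2(\Gamma)}^2 + \|\gradg u_n\|_{L^2(\Gamma)}^2 < 2$, so the sequence is bounded in~$\Ho$.

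Next I would invoke the fact that~$\Ho$ embeds compactly into~$L^2(\Gamma)$ --- this is where the geometry of the closed surface enters, and it is the substantive ingredient one cites from e.g.~$\cite{HebeyEmmanuel1999Naom}$. Passing to a subsequence,~$u_n\rightharpoonup u$ weakly in~$\Ho$ and~$u_n\to u$ strongly in~$L^2(\Gamma)$ for some~$u\in\Ho$. By weak lower semicontinuity of the seminorm~$v\mapsto\|\gradg v\|_{L^2(\Gamma)}$ we get~$\|\gradg u\|_{L^2(\Gamma)}^2 \le \liminf_n\|\gradg u_n\|_{L^2(\Gamma)}^2 = 0$, so~$\gradg u = 0$; since~$\Gamma$ is connected,~$u$ is a constant. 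Strong~$L^2$ convergence gives~$\hat u = \lim_n\hat u_n = 0$, so~$u\equiv 0$, which contradicts~$\|u\|_{L^2(\Gamma)} = \lim_n\|u_n\|_{L^2(\Gamma)} = 1$. This proves the inequality, with~$C_\Gamma$ the (non-explicit) constant produced by the contradiction.

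The main obstacle is really the compact embedding~$\Ho \hookrightarrow L^2(\Gamma)$ itself; everything else is soft functional analysis. A self-contained proof of that embedding would cover~$\Gamma$ by finitely many coordinate charts (possible by compactness), fix a subordinate smooth partition of unity~$\{\chi_k\}$, and note that each~$\chi_k u_n$, transported to a fixed bounded Euclidean domain, is bounded in~$H^1$ with support in a fixed compact set --- so the classical Euclidean Rellich--Kondrachov theorem gives~$L^2$-convergent subsequences chart by chart, and summing the finitely many pieces yields~$L^2$-convergence on~$\Gamma$. One must also check that the metric factor~$\sqrt{\det g}$ and the chart transition maps, together with their derivatives, are bounded above and below, which follows from smoothness and compactness. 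Alternatively, one can avoid compactness entirely and prove the inequality directly: derive a local Poincar\'e inequality on each chart from the Euclidean one, then chain these together over a finite cover using connectedness of~$\Gamma$ to control the global mean~$\hat u$; this route is more elementary but considerably heavier on bookkeeping, so the compactness argument is the one I would write up.
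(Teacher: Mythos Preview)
The paper does not prove this theorem; it simply cites it as Theorem~2.10 in~\cite{HebeyEmmanuel1999Naom} and uses it as a black box in the proof of Theorem~\ref{thm:well-posed}. So there is no in-paper argument to compare against.

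Your argument is the standard Rellich--Kondrachov contradiction proof and is correct. One small point worth making explicit: you use connectedness of~$\Gamma$ to conclude that~$\gradg u = 0$ forces~$u$ to be constant. The paper assumes~$\Gamma$ is the boundary of a bounded domain, which in principle could have several components; however, the paper tacitly assumes connectedness throughout (e.g., the null space of~$\LB$ is one-dimensional), so this is consistent with the ambient hypotheses. If~$\Gamma$ were disconnected the inequality as stated would fail, since a locally constant function with different values on different components has zero surface gradient but nonzero~$u-\hat u$.
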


    \begin{theorem}[Lax-Milgram (see~\cite{Brezis2011})]
    Let~$\mathcal H$ be a Hilbert space and the sesquilinear  form~$\cB(u,\rho)$
    be bounded and coercive on~$\mathcal H$, i.e.
    \begin{equation}
        |\cB(u,\rho)|
        \leq C_B \Vert u\Vert_{\mathcal H}\Vert\rho\Vert_{\mathcal H} \quad \text{and} \quad \Re (\xi \cB(u,u)) \geq C_C \Vert u\Vert_{\mathcal H}^2,
    \end{equation}
    for some positive constants~$C_B$,~$C_C$, and some~$\xi\in \mathbb C$
    with~$|\xi|=1$. If the conjugate linear form~$\cA(\rho)$ is bounded, i.e.
    \begin{equation}
        |\cA(\rho)| \leq C_A \Vert\rho\Vert_{\mathcal H}
    \end{equation}
    for some positive constant~$C_A$, then there exists a
    unique~$u\in\mathcal{H}$ such that~$\cB(u,\rho)=\cA(\rho)$ for
    all~$\rho\in\mathcal H$.
    \end{theorem}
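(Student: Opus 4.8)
The plan is to reduce the assertion to an operator-inversion statement via the Riesz representation theorem, and then extract bijectivity of that operator from the boundedness and coercivity hypotheses. First I would fix $u\in\mathcal H$ and observe that $\rho\mapsto\cB(u,\rho)$ is a bounded conjugate-linear functional on $\mathcal H$ of norm at most $C_B\Vert u\Vert_{\mathcal H}$, so by Riesz representation there is a unique $\bA u\in\mathcal H$ with $\cB(u,\rho)=(\bA u,\rho)_{\mathcal H}$ for all $\rho\in\mathcal H$ and $\Vert\bA u\Vert_{\mathcal H}\le C_B\Vert u\Vert_{\mathcal H}$; uniqueness of the representer forces $u\mapsto\bA u$ to be linear, so $\bA$ is a bounded linear operator on $\mathcal H$. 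Applying Riesz again to the bounded form $\cA$ produces a unique $w\in\mathcal H$ with $\cA(\rho)=(w,\rho)_{\mathcal H}$. The theorem is then equivalent to the claim that $\bA$ is a bijection of $\mathcal H$, since in that case $u=\bA^{-1}w$ is the unique element with $\cB(u,\rho)=\cA(\rho)$ for all $\rho$.

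Next I would use coercivity to bound $\bA$ from below. From $\Re(\xi\,\cB(u,u))\ge C_C\Vert u\Vert_{\mathcal H}^2$ and Cauchy--Schwarz, $C_C\Vert u\Vert_{\mathcal H}^2\le|\cB(u,u)|=|(\bA u,u)_{\mathcal H}|\le\Vert\bA u\Vert_{\mathcal H}\Vert u\Vert_{\mathcal H}$, so $\Vert\bA u\Vert_{\mathcal H}\ge C_C\Vert u\Vert_{\mathcal H}$. This lower bound makes $\bA$ injective and its range closed. To see the range is all of $\mathcal H$, suppose $v\in\mathcal H$ is orthogonal to $\mathrm{ran}(\bA)$; then $(\bA v,v)_{\mathcal H}=0$, hence $\cB(v,v)=0$, and coercivity forces $v=0$. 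Thus $\mathrm{ran}(\bA)$ is both dense and closed, i.e. all of $\mathcal H$, so $\bA$ is bijective with bounded inverse, which finishes the argument.

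I do not expect a genuine obstacle here — this is the textbook proof of Lax--Milgram, which is why the paper simply cites \cite{Brezis2011}. The only point requiring care, and the place a careless write-up would slip, is the bookkeeping around sesquilinearity and the unimodular twist: one must be consistent about which slot of $\cB$ and of the inner product is conjugate-linear, and must carry $\xi$ through the coercivity estimate without ever assuming $\cB(u,u)\in\bbR$. I would present the steps in exactly the order above — Riesz representation to produce $\bA$ and $w$; the lower bound $\Vert\bA u\Vert_{\mathcal H}\ge C_C\Vert u\Vert_{\mathcal H}$ from coercivity; injectivity and closedness of the range; surjectivity via the orthogonality argument; and finally $u=\bA^{-1}w$, with uniqueness read off from injectivity.
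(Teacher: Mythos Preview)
Your proposal is correct and is the standard Riesz-representation proof of Lax--Milgram. The paper does not supply its own proof of this theorem at all --- it is stated with a citation to \cite{Brezis2011} and used as a black box --- so there is nothing to compare against beyond confirming that your argument is the textbook one, which it is.
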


We are now ready to prove an existence theorem for~\eqref{eq:surf_PDE} given
some mildly restrictive conditions on~$a$ and~$c$. The proof of the first
statement appeared in~\cite{dziuk2013finite}, and  the proof of the second is
very similar to the equivalent problem in subsets of~$\bbR^n$ considered in
example 9.5.3 in~\cite{Brezis2011}.

\begin{theorem}\label{thm:well-posed}
Let~$\Gamma$ be a smooth and closed surface and~$f$ be in~$L^2(\Gamma)$. If one of the following holds, then there exists a weak solution~$u\in H^1(\Gamma)$ of~\eqref{eq:surf_PDE}:
\begin{enumerate}
    \item The coefficient function~$c$ is identically zero and~$f$ is mean-zero.
    \item There exists a~$\theta\in \lp -\frac\pi2, \frac\pi2\rp$ such that~$\Re
    (c e^{i\theta}) < 0$.
\end{enumerate}
Furthermore, the solution is unique if condition 2 holds. If condition 1 holds,
then there is a unique mean-zero solution.  
\end{theorem}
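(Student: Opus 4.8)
The plan is to obtain both existence statements from the Lax--Milgram theorem applied on a suitably chosen Hilbert space, with Poincar\'e's inequality needed only for the first statement's coercivity.

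For the first statement ($c\equiv 0$, $f$ mean-zero) I would work on $H^1_\text{mz}(\Gamma)$ with the sesquilinear form $\cB(u,\rho) = -\int_\Gamma a\,\gradg\bar\rho\cdot\gradg u$ and the conjugate-linear form $\cA(\rho) = \int_\Gamma \bar\rho\,f$. Boundedness of $\cB$ follows from $0 < a \le \max_\Gamma a$ (finite since $a$ is continuous and positive on the compact surface $\Gamma$) together with Cauchy--Schwarz, and boundedness of $\cA$ from $f \in L^2(\Gamma)$. For coercivity, set $a_0 := \min_\Gamma a > 0$; then $-\cB(u,u) = \int_\Gamma a|\gradg u|^2 \ge a_0\int_\Gamma |\gradg u|^2$, and since every $u \in H^1_\text{mz}(\Gamma)$ is mean-zero, Poincar\'e's inequality gives $\int_\Gamma |u|^2 \le C_\Gamma\int_\Gamma|\gradg u|^2$, so $-\cB(u,u) \ge \frac{a_0}{1+C_\Gamma}\,\|u\|_{H^1(\Gamma)}^2$; hence $\cB$ is coercive with $\xi = -1$. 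Lax--Milgram then yields a unique $u \in H^1_\text{mz}(\Gamma)$ satisfying $\cB(u,\rho) = \cA(\rho)$ for all $\rho\in H^1_\text{mz}(\Gamma)$. To see that this $u$ is a weak solution in the sense of \eqref{eq:weakform}, one extends the identity to every $\rho\in H^1(\Gamma)$: decomposing $\rho = \rho_0 + \hat\rho$ with $\rho_0$ mean-zero, $\cB(u,\rho) = \cB(u,\rho_0)$ because $\gradg$ annihilates constants, and $\cA(\rho) = \cA(\rho_0)$ because $f$ is mean-zero, so the weak form holds against all test functions.

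For the second statement I would instead use the full space $H^1(\Gamma)$ with $\cB(u,\rho) = -\int_\Gamma a\,\gradg\bar\rho\cdot\gradg u + \int_\Gamma c\,\bar\rho\,u$ and the same $\cA$. Choosing the unit modulus number $\xi = -e^{i\theta}$, a short computation gives $\Re(\xi\,\cB(u,u)) = \cos\theta\int_\Gamma a|\gradg u|^2 - \int_\Gamma \Re(c\,e^{i\theta})|u|^2$. Since $\theta \in (-\frac\pi2,\frac\pi2)$ forces $\cos\theta > 0$, and since $c$ is continuous on the compact set $\Gamma$ with $\Re(c\,e^{i\theta}) < 0$ pointwise so that $\sup_\Gamma \Re(c\,e^{i\theta}) =: -\delta < 0$, both terms are controlled from below, giving $\Re(\xi\,\cB(u,u)) \ge \min(a_0\cos\theta,\delta)\,\|u\|_{H^1(\Gamma)}^2$ with no appeal to Poincar\'e needed. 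Boundedness of $\cB$ (now also using $|c| \le \max_\Gamma|c|$) and of $\cA$ is routine, so Lax--Milgram produces a unique $u\in H^1(\Gamma)$, which is the asserted weak solution and also settles uniqueness under condition 2. Uniqueness of the mean-zero solution under condition 1 is then a separate argument: if $u_1,u_2$ are mean-zero weak solutions, their difference $w$ satisfies the homogeneous weak form, and testing with $\rho = w$ yields $\int_\Gamma a|\gradg w|^2 = 0$, hence $\gradg w = 0$, so $w$ is constant on the connected surface $\Gamma$ and, having mean zero, vanishes.

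I expect the only real subtlety to be the bookkeeping around the mean-zero constraint in the first statement: the form $-\int_\Gamma a|\gradg u|^2$ is \emph{not} coercive on all of $H^1(\Gamma)$ (constants lie in its kernel), so one must restrict to $H^1_\text{mz}(\Gamma)$ to apply Lax--Milgram, and then must verify that the solution so obtained still solves the weak problem tested against every element of $H^1(\Gamma)$ --- a step that works precisely because $f$ was assumed mean-zero, and which would fail otherwise (consistent with the remark in Section~\ref{sec:LB_posed} that the range must be restricted). The remaining content is a mechanical check of the Lax--Milgram hypotheses, using only that $a$ is bounded between two positive constants on the compact surface and, for condition 2, that $\Re(c\,e^{i\theta})$ is bounded away from zero.
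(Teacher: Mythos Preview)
Your proposal is correct and follows essentially the same approach as the paper: Lax--Milgram on $H^1_\text{mz}(\Gamma)$ with Poincar\'e for coercivity under condition~1, and Lax--Milgram on $H^1(\Gamma)$ with the rotation by $e^{i\theta}$ under condition~2. You are in fact slightly more careful than the paper in two places---explicitly extending the weak identity from $H^1_\text{mz}$ test functions to all of $H^1(\Gamma)$ (which works precisely because $f$ is mean-zero), and spelling out the uniqueness argument for condition~1---but these are refinements of the same argument rather than a different route.
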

\begin{proof}
We begin by defining the sesquilinear form 
\begin{equation}
    \cB(u,\rho) = \int_\Gamma \lp -a \, \gradg\bar \rho \cdot \gradg u +c\, \bar \rho \, u \rp
\end{equation}
and the conjugate linear form
\begin{equation}
    \cA(\rho) = \int_\Gamma f \, \bar\rho.
\end{equation}
With these definitions, the weak form~\eqref{eq:weakform} becomes the equation
\begin{equation}
    \cB(u,\rho) = \cA(\rho), \qquad \text{for all } \rho \in \Ho.
\end{equation}
We note that~$\cB$ is bounded on~$H^1(\Gamma)\times H^1(\Gamma)$ :
\begin{equation}
  \begin{aligned}
    |\cB(u,\rho)| &\leq \max_\Gamma a \int_\Gamma |\gradg\bar \rho \cdot \gradg u| +\max_\Gamma |c| \int_\Gamma |\bar \, \rho u| \\
    &\leq \lp \max_\Gamma a +\max_\Gamma |c| \rp \Vert\rho\Vert_{H^1(\Gamma)} \, \Vert u\Vert_{H^1(\Gamma)}.
  \end{aligned}
\end{equation}
The conjugate linear form~$\cA$ is bounded on~$H^1(\Gamma)$ via the
Cauchy-Schwartz inequality.

We first consider Condition 1. In this case, constant functions are in the null
space of the PDE and the range is mean-zero functions, as in the case of the
Laplace-Beltrami problem. We shall therefore apply the Lax-Milgram theorem on
the space~$\Hoo$. The forms~$\cB(u,\rho)$ and~$\cA(\rho)$ are bounded on~$\Hoo$ for
the same reason that they are bounded on~$\Ho$. To see that the sesquilinear
form is coercive, we compute
    \begin{equation}
        -\cB(u,u) =   \int_\Gamma a \Vert\gradg u \Vert^2 \geq \min_\Gamma a \int_\Gamma \Vert\gradg u \Vert^2.
    \end{equation}
    Applying the Poincar\'e inequality then gives that
    \begin{equation}
      \begin{aligned}
      -\cB(u,u) &\geq  \frac{\min_\Gamma a}{1+C_\Gamma}\Vert\gradg u\Vert_{L^2(\Gamma)}^2 + \frac{\min_\Gamma a}{1+C_\Gamma}\Vert u\Vert_{L^2(\Gamma)}^2 \\ 
      &\geq \frac{\min_\Gamma a}{1+C_\Gamma} \Vert u\Vert_{H^1(\Gamma)}^2.
    \end{aligned}
    \end{equation}
Since the multiplicative constant on the right hand side is positive,~$\cB$ is
coercive. The Lax-Milgram theorem then gives that there exists a unique~$u\in
\Hoo$ that solves~\eqref{eq:weakform}.

We now consider Condition 2. Multiplying~$\cB(u,u)$ by~$e^{i\theta}$ and taking
the real part gives that
\begin{equation}
\begin{aligned}  
    \Re (-e^{i\theta}\cB(u,u))  &= \int_\Gamma \lp a \cos \theta \left\Vert\gradg u\right\Vert^2 - \Re ( c e^{i\theta}) |u|^2 \rp \\
    &\geq \cos \theta \min_\Gamma a \Vert\gradg u\Vert_{L^2(\Gamma)}^2 - \max_\Gamma \Re (c e^{i\theta})\Vert u\Vert_{L^2(\Gamma)}^2.
    \end{aligned}
\end{equation}
Since~$a$ is strictly positive,~$\theta\in \lp -\frac\pi2, \frac\pi2\rp$, and
the real part of~$c e^{i\theta} $ is strictly negative, we have that~$\cB$ is
coercive with constant~$$C_C=\min\{\cos\theta \min_\Gamma a ,-\max_\Gamma \Re (c
e^{i\theta}) \}  <0.$$ The Lax-Milgram Theorem then gives that  there exists a
unique~$u\in \Ho$ that solves~\eqref{eq:weakform}.
\end{proof}


\begin{theorem}
Suppose that~$\Gamma$ is a smooth surface and the coefficient functions are
smooth. If the solution~$u$ of~\eqref{eq:surf_PDE} exists and the right hand
side~$f$ is smooth, then~$u$ is also smooth.
\end{theorem}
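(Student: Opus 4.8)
The statement to prove is elliptic regularity for the surface PDE~\eqref{eq:surf_PDE}: smooth coefficients and smooth right-hand side imply a smooth solution.

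\medskip

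The plan is to reduce the global statement to a local one and then invoke the standard interior elliptic regularity theory in~$\bbR^2$. First I would fix a point~$\bx_0\in\Gamma$ and a smooth local parameterization~$\by\colon U\to\Gamma$ with~$\by(\bs 0)=\bx_0$, where~$U\subset\bbR^2$ is open. Pulling~\eqref{eq:surf_PDE} back through~$\by$ (using the formulas~\eqref{eq:gradg} and~\eqref{eq:divg} for~$\gradg$ and~$\divg$, together with the area element~$\sqrt{\det g}$), the weak form~\eqref{eq:weakform} becomes, after restricting the test function~$\rho$ to be supported in a small neighborhood of~$\bs0$, a weak-form elliptic equation on~$U$ of the form
\begin{equation}
  \sum_{i,j}\partial_i\lp \tilde a_{ij}\,\partial_j \tilde u\rp + \tilde c\,\tilde u = \tilde f,
\end{equation}
where~$\tilde u = u\circ\by$, the matrix~$\tilde a_{ij} = \sqrt{\det g}\,a\,(g^{-1})_{ij}$, and~$\tilde c$,~$\tilde f$ are the corresponding pullbacks multiplied by~$\sqrt{\det g}$. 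The key point is that~$(\tilde a_{ij})$ is uniformly elliptic on a compact neighborhood of~$\bs0$: since~$\Gamma$ is smooth,~$g$ and~$g^{-1}$ are smooth and positive definite, and since~$a$ is smooth and strictly positive by hypothesis, the product is a smooth, symmetric, positive-definite matrix field. Likewise~$\tilde c$ and~$\tilde f$ are smooth because the coefficients, the right-hand side, and~$\by$ are all smooth.

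\medskip

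Next I would apply the classical interior regularity theorem for second-order elliptic equations in divergence form with smooth coefficients (see, e.g., Gilbarg--Trudinger, or~\cite{Brezis2011}): a weak solution~$\tilde u\in H^1$ of such an equation with~$C^\infty$ coefficients and~$C^\infty$ right-hand side is itself~$C^\infty$ on the interior of the domain. This is a bootstrap argument — one shows~$\tilde u\in H^2_{\mathrm{loc}}$ by difference quotients, then differentiates the equation to climb to~$H^k_{\mathrm{loc}}$ for all~$k$, and finally uses Sobolev embedding to conclude~$\tilde u\in C^\infty$. Since~$\tilde u = u\circ\by$ is smooth on a neighborhood of~$\bs0$ and~$\by$ is a smooth diffeomorphism onto its image, it follows that~$u$ is smooth in a neighborhood of~$\bx_0$ on~$\Gamma$.

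\medskip

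Finally, since~$\bx_0\in\Gamma$ was arbitrary and smoothness is a local property, this shows~$u\in C^\infty(\Gamma)$, which is the claim. I expect the main obstacle to be purely bookkeeping: verifying carefully that the pulled-back equation is genuinely of divergence form with the stated smooth, uniformly elliptic coefficient matrix, so that an off-the-shelf interior regularity result applies verbatim. No essentially new analysis is needed beyond what is already standard in~$\bbR^2$; the surface structure enters only through the smooth change of variables. (If one wished to avoid local coordinates entirely, an alternative is to run the~$H^k$ bootstrap directly on~$\Gamma$ using the commutator estimates for~$\gradg$ and covariant derivatives, but the coordinate-chart reduction is cleaner and requires no new machinery.)
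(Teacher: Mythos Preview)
Your proposal is correct and follows essentially the same approach as the paper: pull the equation back through a smooth local chart to obtain a uniformly elliptic equation with smooth coefficients on a domain in~$\bbR^2$, invoke standard interior regularity (the paper cites Evans, Section~6.3.1, Theorem~3), and then cover~$\Gamma$ by charts. Your write-up is more detailed about the form of the pulled-back coefficients and the bootstrap mechanism, but the argument is the same.
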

\begin{proof}
    We prove this by mapping~\eqref{eq:surf_PDE} to an elliptic equation
    in~$\bbR^2$ and using standard regularity results. If~$\by:U\to \Gamma$ is
    local parameterization of part of~$\Gamma$, then the pull-back of the
    differential operator onto $U$ is
    \begin{equation}
      \mathcal E v := \lp \divg \lp a\gradg (v\circ \by)
      \rp \rp \circ \by^{-1} + (c\circ \by^{-1}) v.
    \end{equation}
Because~$\by$ is smooth,~$\mathcal E$ is an elliptic operator with smooth coefficients.
Since we also have that~$\mathcal E (u\circ \by) = f\circ \by$, Theorem 3 in Section 6.3.1 of
\cite{Evans2010} gives that~$u\circ \by$ is smooth in
the interior of the domain~$U$. We therefore have that~$u$ is smooth on the interior of the range of~$\bs y$. By choosing different parameterizations we can
cover~$\Gamma$ and prove that~$u$ is smooth everywhere.
\end{proof}

\subsubsection*{Helmholtz-type equations}
\label{sec:helm}


As a special case, we now address existence and uniqueness
for~\eqref{eq:surf_PDE} when~$c$ is real and positive, which corresponds to
equations of Helmholtz-type. This case is more complicated as the associated
sesquilinear form will not be coercive. For simplicity, we suppose that~$c >0$
is constant and not zero. In this case, we can use an argument very similar to
that used for the equivalent problem in the plane, see~\cite{Evans2010}.
When~$c$ is constant, equation~\eqref{eq:surf_PDE} is well-posed if and only
if~$-c$ is in the resolvent set of
\begin{equation}
    \mathcal L:= \divg a \gradg.
\end{equation}
We will study the resolvent set of~$\mathcal L$
by proving that its inverse, $\mathcal S:= \mathcal L^{-1}$, is a self-adjoint and negative semi-definite compact
operator. This will allow us to use the spectral theorem for compact operators,
Theorem 28.3 in~\cite{lax2002functional}, to characterize the resolvent set
of~$\mathcal L$. We now formalize this argument. 

\begin{theorem}[Analogue of Theorem 6.2.5 in \cite{Evans2010}]\label{thm:Helm_posed}
  Let~$\mathcal S$ be the map from functions~$g\in L^2_\text{mz}(\Gamma)$ to the
  mean-zero solution~$v$ of~$\mathcal L v = g$. If~$c$ is a non-zero constant
  function, then there exists a unique weak solution~$u\in H^1(\Gamma)$
  of~\eqref{eq:surf_PDE} if and only if ~$-1/c$ is not in the spectrum
  of~$\mathcal S$.
\end{theorem}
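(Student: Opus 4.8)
The plan is to run the classical Fredholm-alternative argument for the Helmholtz equation (as in \cite{Evans2010}), transplanted to the closed surface $\Gamma$, with $\mathcal S$ playing the role of an inverse Laplacian. The first task is to record the functional-analytic properties of $\mathcal S$: that it is a well-defined bounded operator $L^2_\text{mz}(\Gamma)\to L^2_\text{mz}(\Gamma)$, compact, self-adjoint, and negative definite. Well-definedness and boundedness come directly from Theorem~\ref{thm:well-posed}, condition 1 (applied with the given smooth positive $a$ and with $c\equiv 0$): it produces a unique mean-zero weak solution $v$ of $\mathcal L v=g$, and the coercivity estimate in its Lax--Milgram step gives $\Vert v\Vert_{H^1(\Gamma)}\le C\Vert g\Vert_{L^2(\Gamma)}$. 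Compactness then follows by factoring $\mathcal S$ through the (standard) compact embedding $H^1(\Gamma)\hookrightarrow L^2(\Gamma)$. Self-adjointness is the symmetry identity
\begin{equation*}
  \langle \mathcal S g,h\rangle_{L^2(\Gamma)}
  = -\int_\Gamma a\,\gradg v\cdot\gradg\bar w
  = \langle g,\mathcal S h\rangle_{L^2(\Gamma)},
\end{equation*}
with $v=\mathcal S g$ and $w=\mathcal S h$, obtained by testing the weak form of $\mathcal L v=g$ against $w$ and that of $\mathcal L w=h$ against $v$, and using that $a$ is real-valued. Taking $h=g$ gives $\langle \mathcal S g,g\rangle=-\int_\Gamma a\Vert\gradg v\Vert^2\le 0$, with equality only when $v$ is constant, hence $v=0$ and $g=0$; so $\mathcal S$ is negative definite, $0$ is not an eigenvalue, and $\sigma(\mathcal S)\subset(-\infty,0]$.

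The second step is to rewrite~\eqref{eq:surf_PDE} as a spectral equation for $\mathcal S$. Testing~\eqref{eq:weakform} against the constant function $\rho\equiv 1$ and using $c\ne 0$ pins down the mean of any weak solution, forcing $u=u_0+m$ with $m$ the constant $\hat f/c$ (here $\hat f$ is the mean of $f$) and $u_0\in H^1_\text{mz}(\Gamma)$. Setting $f_0:=f-cm\in L^2_\text{mz}(\Gamma)$, the weak form~\eqref{eq:weakform} is \emph{equivalent} to the pair of statements ``$m=\hat f/c$'' and ``$u_0$ is the mean-zero weak solution of $\mathcal L u_0+c u_0=f_0$'' --- both directions checked by splitting test functions into their mean and mean-zero parts, which is routine. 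Applying $\mathcal S$ to $\mathcal L u_0=f_0-cu_0$ converts the latter into $(I+c\mathcal S)u_0=\mathcal S f_0$, i.e. $c\bigl(\mathcal S+\tfrac1c I\bigr)u_0=\mathcal S f_0$.

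The conclusion then follows from the spectral theorem for compact self-adjoint operators (Theorem 28.3 in~\cite{lax2002functional}) together with Riesz--Fredholm theory for the compact perturbation $I+c\mathcal S$. If $-1/c\notin\sigma(\mathcal S)$, then $\mathcal S+\tfrac1c I$ is boundedly invertible, hence so is $I+c\mathcal S=c(\mathcal S+\tfrac1c I)$, so $u_0=(I+c\mathcal S)^{-1}\mathcal S f_0$ is uniquely determined for every $f$, and with it $u$. If $-1/c\in\sigma(\mathcal S)$, then since $\mathcal S$ is compact and $-1/c\ne 0$ (as $c$ is finite and nonzero), $-1/c$ is an eigenvalue with nontrivial finite-dimensional eigenspace $N=\ker(I+c\mathcal S)$; taking $f=0$ (so $m=0$ and $f_0=0$) the homogeneous problem has solution set exactly $N\ne\{0\}$, so a weak solution of~\eqref{eq:surf_PDE}, when it exists at all, is never unique. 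This gives the stated equivalence.

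I do not expect a deep obstacle; the two places that need care rather than cleverness are (i) the self-adjointness computation, where one must make sure the integrations by parts used are \emph{literally} the weak formulations (so that they are valid at $H^1$ regularity) and that it is the reality of $a$ that produces the symmetry, and (ii) the bookkeeping in the reduction, ensuring the correspondence $u\leftrightarrow(m,u_0)$ is a genuine two-way equivalence of weak formulations, not just one implication. The compact embedding $H^1(\Gamma)\hookrightarrow L^2(\Gamma)$ on the closed surface $\Gamma$ should be invoked as standard, in the same spirit as the Poincar\'e inequality quoted above.
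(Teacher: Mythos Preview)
Your proposal is correct and follows essentially the same Fredholm-alternative route as the paper: split off the constant part using that $c$ is a nonzero constant, apply $\mathcal S$ to the mean-zero equation to obtain $(I+c\mathcal S)u_0=\mathcal S f_0$, and conclude via the spectrum of $\mathcal S$. The only difference is organizational: you fold the compactness, self-adjointness, and definiteness of $\mathcal S$ into this proof, whereas the paper defers those to the immediately following Theorem~\ref{thm:closed_eval} and keeps the present argument to the bare reduction.
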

\begin{proof}
  Suppose that~$u$ and~$f\in L^2_\text{mz}(\Gamma)$. If we apply~$\mathcal S$ to
  both sides of~\eqref{eq:surf_PDE} and divide by~$c$, we find
  that~\eqref{eq:surf_PDE} becomes
  \begin{equation}
    \frac{1}{c}u + \mathcal Su = \frac{1}{c} \mathcal Sf. \label{eq:inverseequation}
  \end{equation}
  Since~$c$ was assumed to be constant,~\eqref{eq:inverseequation} (and
  thus~\eqref{eq:surf_PDE}) is well-posed in~$L^2_\text{mz}(\Gamma)$ if and only
  if~$-\frac 1c$ is not in the spectrum of~$\mathcal S$.
    
  We now suppose that~$f\in L^2(\Gamma)$. In this case, we
  write~$f=\tilde f + \hat f$, where~$\hat f$ is the mean of~$f$, and write the
  function~$u = \tilde u+\hat u$, where~$\hat u$ is the mean of~$u$. Since~$c$
  is constant,~\eqref{eq:surf_PDE} is equivalent to the uncoupled equations
  \begin{equation}
    \begin{aligned}
      \DO \tilde u + c\tilde u &= \tilde f,\\
      c\hat u &= \hat f.
    \end{aligned}
  \end{equation}
  Since~$c$ is non-zero, there always exists a unique $\hat u$ solving the
  second equation. The first equation is equivalent to the mean-zero case, and
  so has a unique solution if and only if~$-1/c$ is not in the spectrum
  of~$\mathcal S$, and we have the desired result.
\end{proof}
We now verify the spectrum of~$\mathcal S$ is countable.

\begin{theorem}[Analogue of Theorems 6.2.4 and 6.2.5 in
\cite{Evans2010}]\label{thm:closed_eval} Let~$\Gamma$ be a smooth and closed
surface and the function~$a$ be continuous and positive. The solution
operator~$\mathcal S$ is a compact map from~$L^2_\text{mz}(\Gamma)$ to itself.
Furthermore, the spectrum of~$\mathcal S$ is real, positive, and countable.
\end{theorem}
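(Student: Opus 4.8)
The plan is to establish the three claimed properties of $\mathcal S$ in the natural order: compactness first, then self-adjointness, then the characterization of the spectrum. For compactness, I would argue that $\mathcal S$ factors through the compact inclusion $H^1(\Gamma) \hookrightarrow L^2(\Gamma)$ (the Rellich–Kondrachov theorem on a compact manifold). Concretely, given $g \in L^2_\text{mz}(\Gamma)$, Theorem~\ref{thm:well-posed} (Condition~1, with $c=0$) produces a unique mean-zero weak solution $v = \mathcal S g \in \Hoo$. The coercivity estimate already proved in that theorem, namely $-\cB(v,v) \ge \frac{\min_\Gamma a}{1+C_\Gamma}\Vert v\Vert_{H^1(\Gamma)}^2$, combined with $-\cB(v,v) = \int_\Gamma g\,\bar v \le \Vert g\Vert_{L^2(\Gamma)}\Vert v\Vert_{L^2(\Gamma)} \le \Vert g\Vert_{L^2(\Gamma)}\Vert v\Vert_{H^1(\Gamma)}$, yields the bound $\Vert \mathcal S g\Vert_{H^1(\Gamma)} \le \frac{1+C_\Gamma}{\min_\Gamma a}\Vert g\Vert_{L^2(\Gamma)}$. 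Thus $\mathcal S : L^2_\text{mz}(\Gamma) \to H^1(\Gamma)$ is bounded, and post-composing with the compact embedding into $L^2(\Gamma)$ shows $\mathcal S : L^2_\text{mz}(\Gamma) \to L^2_\text{mz}(\Gamma)$ is compact.

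For self-adjointness, I would use the weak formulation directly. Given $g, h \in L^2_\text{mz}(\Gamma)$ with $v = \mathcal S g$ and $w = \mathcal S h$, testing the weak equation for $v$ against $w$ gives $\int_\Gamma g\,\bar w = -\int_\Gamma a\,\gradg \bar w \cdot \gradg v$, and testing the weak equation for $w$ against $v$ gives $\int_\Gamma h\,\bar v = -\int_\Gamma a\,\gradg \bar v \cdot \gradg w$; since $a$ is real, the two right-hand sides are complex conjugates of one another, so $(\mathcal S g, h)_{L^2} = \overline{(\mathcal S h, g)_{L^2}} = (g, \mathcal S h)_{L^2}$, which is exactly self-adjointness. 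Taking $h = g$ in the same computation gives $(\mathcal S g, g)_{L^2} = -\int_\Gamma a\,\Vert\gradg v\Vert^2 \le 0$, so $\mathcal S$ is negative semi-definite; in fact it is strictly negative on nonzero $g$, since $\gradg v \equiv 0$ forces $v$ constant, hence $v = 0$ (mean zero), hence $g = \mathcal L v = 0$.

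Finally, the spectral theorem for compact self-adjoint operators (Theorem~28.3 in~\cite{lax2002functional}) applies: the spectrum of $\mathcal S$ consists of $0$ together with an at most countable set of real eigenvalues accumulating only at $0$, and by the (strict) negative-definiteness just established these eigenvalues are all negative. This gives a countable real spectrum; the only subtlety is the sign convention in the statement, where the spectrum is called "positive" — this is consistent if one reads $\mathcal L = \divg a\gradg$ as the negative-definite operator so that $\mathcal S = \mathcal L^{-1}$ is negative-definite, and one should simply state the eigenvalues are strictly negative and accumulate at $0$, or equivalently that $-\mathcal S$ has a countable strictly positive spectrum accumulating at $0$. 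I expect the compactness step to be the main point requiring care, since it rests on invoking Rellich–Kondrachov on a closed surface and on correctly extracting the $H^1$ a~priori bound from the coercivity inequality of Theorem~\ref{thm:well-posed}; the self-adjointness and the invocation of the spectral theorem are then routine.
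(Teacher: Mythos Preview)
Your proposal is correct and follows essentially the same route as the paper: boundedness of $\mathcal S$ into $H^1_\text{mz}(\Gamma)$ via the coercivity estimate plus Cauchy--Schwarz, compactness via the Rellich--Kondrachov embedding, self-adjointness from the symmetry of the bilinear form, negative semi-definiteness from $(\mathcal S g,g)_{L^2}=-\int_\Gamma a\,\Vert\gradg v\Vert^2$, and then the spectral theorem for compact self-adjoint operators. Your observation about the sign discrepancy is also apt: the paper's own proof concludes that the eigenvalues are \emph{negative} (with zero), matching your analysis rather than the word ``positive'' in the theorem statement.
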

\begin{proof}
  We suppose that~$f,g\in L^2_\text{mz}(\Gamma)$ and that~$v=\mathcal Sg$. We
  also let
  \begin{equation}
    B(\mu,\rho) = -\int_\Gamma a \, \gradg \mu \cdot \gradg \bar \rho.
  \end{equation}
  We begin by proving that~$\cS$ is a bounded map from~$L^2_\text{mz}(\Gamma)$
  to~$H^1_\text{mz}(\Gamma)$. By the definition of~$\cS$,
  \begin{equation}
    B(v,v) = (g,v)_{L^2}.
  \end{equation}
  The coerciveness of~$B$ discussed in the proof of Theorem~\ref{thm:well-posed}
  and the Cauchy-Schwartz inequality then give that
  \begin{equation}
    \begin{aligned}
        c\Vert v\Vert_{H^1(\Gamma)}^2 &\leq  B(v,v) \\
        &= (g,v)_{L^2} \\
        &\leq \Vert g \Vert_{L^2(\Gamma)} \, \Vert v\Vert_{L^2(\Gamma)} \\
        &\leq \Vert g\Vert_{L^2(\Gamma)} \, \Vert v\Vert_{H^1(\Gamma)} 
    \end{aligned}
  \end{equation}
  The operator~$\mathcal S$ is thus a bounded map from~$L^2_\text{mz}(\Gamma)$
  to~$H^1_\text{mz}(\Gamma)$. Since~$H^1_\text{mz}(\Gamma)$ is compactly
  embedded in~$L^2_\text{mz}(\Gamma)$, we have that~$\cS$ is a bounded and
  compact map from~$L^2_\text{mz}(\Gamma)$ to itself.
    
  We now prove that~$\mathcal S$ is Hermitian on~$L^2_\text{mz}(\Gamma)$. By the
  definition of~$Sf$,
  \begin{equation}
    B(v,Sf)  = (v,f)_{L^2}  = (\mathcal Sg,f)_{L^2}.
  \end{equation}
  Similarly,~$B(\cS f,v)=(\cS f,g)_{L^2}$. The conjugate symmetry of~$B$ then gives
  that~$\mathcal S$ is a Hermitian operator on~$L^2_\text{mz}(\Gamma)$.

  Finally,~$\cS$ is a negative semidefinite operator because
  \begin{equation*}
    (\mathcal Sg,g)_{L^2}=B(v,v) = - \int_\Gamma a\, \Vert\gradg v\Vert^2 \leq 0
  \end{equation*} for all~$g\in L^2_\text{mz}(\Gamma)$.
    
  By the spectral theorem for self adjoint compact operators
  (see~\cite{lax2002functional}), assembling the above observations gives that
  the spectrum of~$\mathcal S$ is composed of a countable number of isolated
  negative eigenvalues and zero.
\end{proof}

\subsection{Elliptic problems without advection}
\label{sec:gen_elliptic}

In this section, we introduce a parametrix for~\eqref{eq:surf_PDE} and extend
the argument in Section~\ref{sec:LBparametrix} to find an integral equation form
of~\eqref{eq:surf_PDE}. We will assume that~$a$ is a smooth and positive
function. We shall also assume that~$c$ is a smooth complex-valued function and
that~\eqref{eq:surf_PDE} has a unique solution (see
Theorems~\ref{thm:well-posed},~\ref{thm:Helm_posed}, and~\ref{thm:closed_eval}). For the same reasons
discussed in Section~\ref{sec:LB_posed}, if~$c= 0$ we will also assume that~$u$
and~$f$ are mean-zero. We also suppose for now that~$\bs b = \bs 0$, and discuss the
case with non-zero~$\bs b$ in the following section.

Analogous to the Laplace-Beltrami case, we let~$G(r;\tilde
a,\tilde c)$ be the Green's function for the constant coefficient elliptic
equation in the plane:
\begin{equation}
    \lp \tilde a\Delta_2  + \tilde c \rp  G(r;\tilde a,\tilde c) = \delta(r),
\end{equation}
where~$\tilde a$ and~$\tilde c$ are arbitrary constants.

Note that if~$\tilde c = 0$, then the Green's function~$G$ is given by
\begin{equation}
  G(r;\tilde a,\tilde c) = 
  \frac{1}{2\pi \tilde a} \log r,
\end{equation}
and if~$\tilde c \neq 0$, then
\begin{equation}
  G(r;\tilde a,\tilde c) =
  \frac{i}{4\tilde a} \, H^{(1)}_0 \lp \sqrt{\tilde c /\tilde a }\;r \rp ,
\end{equation}
where~$H^{(1)}_0$ is the zeroth-order Hankel function of the first kind.  

We use this Green's function to define a parametrix for the general variable
coefficient case:
\begin{equation}
  \label{eq:diff_param}
    K(\bs x,\bs x') := G(\Vert\bx - \bx'\Vert; a(\bx'),c(\bx ')),
\end{equation}
for~$\bx,\bx' \in \mathbb R^3$. Before proving that~$K$ is a parametrix
for~\eqref{eq:surf_PDE}, however, we note that we could have chosen~$K$ to
depend on the values of~$a$ and~$c$ at~$\bx$ instead of~$\bx'$. We made the
above choice so that~$K(\bs x,\bs x')$ is defined for~$\bx$ in a neighborhood
of~$\Gamma$. This allow us to apply the formula~\eqref{eq:diff_remainderLB} to
differentiate~$K$.

We also note that the Green's function must have a branch cut due to the square
root in the argument and logarithmic terms in the Hankel function expansions. We
therefore add the assumption that~$c$ is contained in some wedge in the complex
plane. In what follows, since~$r = \Vert \bx - \bx' \Vert$ is real, we can avoid the branch cut in the Green's
function by choosing the branch cut to be outside that wedge.

Since we will need them below, we note that for small~$z \in \mathbb C$ 
we have the following asymptotic expansions for~$H^{(1)}_0$ and its derivatives:
\begin{equation}
  \label{eq:asymptotic_H}
  \begin{aligned}
    H^{(1)}_0(z) &= \frac{2i}{\pi}\log z +\cO(1),\\
    \frac{d}{dz}   H^{(1)}_0(z) &= \frac{2i}{\pi z}+\cO(z\log z),\\
   \frac{d^2}{dz^2} H^{(1)}_0(z) &=-\frac{2i}{\pi z^2}+\cO(\log z). 
  \end{aligned}
\end{equation}
We now verify that~$K$ is a candidate to be parametrix for~\eqref{eq:surf_PDE}.
Afterwards, we shall prove properties of the remainder function such that~$K$ is
indeed a parametrix.

\begin{theorem}
  \label{thm:parametrix}
The function~$K$ satisfies
\begin{equation}
    \divg a(\bx) \gradg K(\bs x,\bs x') +c(\bx)K(\bs x,\bs x') = \delta(\bx - \bx') + R(\bx,\bx'),\label{eq:diff_form}
\end{equation}
where the derivatives are taken with respect to~$\bx$. Furthermore,
wherever~$K(\bs x, \bs x')$ is smooth the remainder function is
\begin{multline}
  \label{eq:explicit_remainder}
 R(\bs x,\bx')
 =\gradg a(\bx) \cdot \hat \br \, G'(r ; a(\bx'),c(\bx ')) -a(\bx) \, G''(r;a
 (\bx'),c(\bx ')) \frac{ (\bs n(\bx) \cdot \br )^2}{r^2}  \\
  +a(\bx) \, G'(r;a(\bx'),c(\bx ')) \frac{(\bs n(\bx)\cdot \br )^2}{r^3} 
  - 2a(\bx)H(\bx) \bs n(\bx)\cdot \hat\br \, G'(r; a(\bx'),c(\bx ')) \\
 + \lp c(\bx)-\frac{a(\bx)c(\bx')}{a(\bx')}\rp G(r; a(\bx'),c(\bx ')),
\end{multline}
where~$\br = \bx - \bx'$,
$r = \Vert \br \Vert$, ~$\hat \br = \br/r$,
\begin{equation}
  G'(r;a,c) = \partial_r G(r;a,c) , \qquad\text{and}\qquad  G''(r;a,c) = \partial_r^2G'(r;a,c) .
\end{equation}
\end{theorem}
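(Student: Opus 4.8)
The plan is to mirror the proof of Theorem~\ref{thm:parametrixLB} as closely as possible, since~$K$ differs from~$\KLB$ only by the smooth-in-$\bx$ dependence on the frozen coefficients~$a(\bx')$ and~$c(\bx')$ and by the lower-order term~$c(\bx)K$. First I would establish the delta-function identity~\eqref{eq:diff_form}. Because $K(\bs x,\bs x')$ is smooth for $\bx \neq \bx'$, it suffices to show that $\lim_{R\to 0}\int_{B_R(\bx')}\lp\divg a\gradg K + cK\rp\phi = \phi(\bx')$ for every smooth test function~$\phi$, with the operator interpreted weakly. The $cK\phi$ term drops out in the limit: both $G(r;a(\bx'),c(\bx'))$ (logarithmic or Hankel, hence $\cO(\log r)$ near $r=0$) and $c$ and $\phi$ are bounded, so the integral over $B_R(\bx')$ is $\cO(R^2\log R)\to 0$. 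For the principal part, I would integrate by parts on $\Gamma \setminus B_R(\bx')$ exactly as in Theorem~\ref{thm:parametrixLB}, producing a boundary term on $C_R(\bx')$ of the form $\int_{C_R(\bx')} a\,\bs\nu\cdot\gradg K\,\phi$ plus terms that vanish by smoothness of $\phi$ and the weak singularity of $K$. The only new feature is the factor $a$; writing $a(\bx) = a(\bx') + \cO(R)$ on $C_R(\bx')$ and using that, near the origin, $G'(r;a(\bx'),c(\bx')) \sim \tfrac{1}{2\pi a(\bx') r}$ for both the Laplace and Helmholtz Green's functions (this follows from the $\log$ form when $\tilde c = 0$ and from the expansion~\eqref{eq:asymptotic_H} of $(d/dz)H^{(1)}_0$ when $\tilde c \neq 0$), the $a(\bx')$ in the numerator cancels the $a(\bx')$ in the denominator of $G'$. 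Thus the local integral $I_R$ reduces, after the same Taylor expansion of $\by$ and the same change of variables $\Phi_R(\theta)=R[\cos\theta,\sin\theta]^T$, to exactly the integral $(2\pi)^{-1}\int_0^{2\pi} R^{-2}\Phi_R^T g(\bs 0)g^{-1}(\bs 0)\Phi_R\,d\theta = 1 + \cO(R)$ that appeared before, giving $\phi(\bx')$ in the limit.

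Next I would derive the explicit remainder formula~\eqref{eq:explicit_remainder}, valid wherever $K$ is smooth, i.e. for $\bx\neq\bx'$. Here I use two ingredients. First, the same neighborhood identity for the surface Laplacian quoted from~\cite{nedelec2001} and used in~\eqref{eq:diff_remainderLB}: for a function $w$ smooth near $\Gamma$, $\surflap w = \Delta w - \partial_{\bs n}^2 w - 2H\,\partial_{\bs n} w$, applied here to $w = a(\bx')G(\Vert\bx-\bx'\Vert;a(\bx'),c(\bx'))$ (with $a(\bx'),c(\bx')$ frozen). Second, I expand $\divg a(\bx)\gradg K = a(\bx)\surflap K + \gradg a(\bx)\cdot\gradg K$ and then add $c(\bx)K$. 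The term $a(\bx)\surflap K$ is handled by the neighborhood identity, and then I use that the \emph{full} volume Laplacian of $a(\bx')G(r;a(\bx'),c(\bx'))$ — with the coefficients frozen — equals $\delta(\bx-\bx') - c(\bx')G(r;a(\bx'),c(\bx'))$ by definition of the planar Green's function extended radially to $\bbR^3$; away from $\bx'$ this gives $\Delta\!\lp a(\bx')G\rp = -c(\bx')G$, so $a(\bx)\Delta K = \tfrac{a(\bx)}{a(\bx')}\Delta\!\lp a(\bx')G\rp = -\tfrac{a(\bx)c(\bx')}{a(\bx')}G$. Collecting: the $\Delta$ piece contributes $-\tfrac{a(\bx)c(\bx')}{a(\bx')}G$, the normal-derivative pieces $\partial_{\bs n}^2 K$ and $\partial_{\bs n}K$ are computed by the chain rule, since $K$ is radial in $\br$ so $\partial_{\bs n(\bx)}K = (\bs n(\bx)\cdot\hat\br)G'(r)$ and $\partial_{\bs n(\bx)}^2 K = (\bs n(\bx)\cdot\hat\br)^2 G''(r) + \tfrac{1 - (\bs n(\bx)\cdot\hat\br)^2}{r}G'(r)$ (the standard Hessian-of-a-radial-function identity), the $-2aH\partial_{\bs n}K$ term gives $-2a(\bx)H(\bx)(\bs n(\bx)\cdot\hat\br)G'(r)$, the $\gradg a\cdot\gradg K$ term gives $\gradg a(\bx)\cdot\hat\br\,G'(r)$ because the tangential gradient of the radial $K$ is the tangential projection of $\hat\br\,G'(r)$ and $\gradg a$ is tangential, and finally the $c(\bx)K = c(\bx)G$ term combines with the $\Delta$ contribution to yield the displayed coefficient $\lp c(\bx) - \tfrac{a(\bx)c(\bx')}{a(\bx')}\rp G$. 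Matching signs term by term with~\eqref{eq:explicit_remainder} then finishes the derivation (note the $\tfrac{1}{r}G'$ piece of $\partial_{\bs n}^2 K$ combines with the $\Delta K$ bookkeeping to leave exactly the $+a(\bx)G'(r)(\bs n(\bx)\cdot\br)^2/r^3$ and $-a(\bx)G''(r)(\bs n(\bx)\cdot\br)^2/r^2$ terms shown; I would double-check the $\tfrac{1}{r}G'$ cross-term carefully).

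The main obstacle I anticipate is bookkeeping rather than conceptual: correctly tracking the $\tfrac{1}{r}G'(r)$ contribution from the Hessian of the radial kernel and verifying it lands exactly as the $+a(\bx)G'(r)(\bs n(\bx)\cdot\br)^2/r^3$ term (with no leftover $\tfrac{1}{r}G'$ piece), and making sure the near-origin cancellation of $a(\bx')$ in $I_R$ is uniform over the branch-cut wedge assumed for $c$. For the latter I would invoke the asymptotic expansions~\eqref{eq:asymptotic_H}, which show $G'(r;a(\bx'),c(\bx')) = \tfrac{1}{2\pi a(\bx')r} + \cO(r\log r)$ regardless of whether $c(\bx') = 0$ or not, so the leading behavior — and hence the value of the limit $I_R \to 1$ — is independent of $c$; the $\cO(r\log r)$ correction only contributes $\cO(R\log R)$ to $I_R$, which is absorbed. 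One small subtlety worth a remark: the delta-function computation only needs the leading singular behavior of $G$ and $G'$, so the proof of~\eqref{eq:diff_form} is genuinely identical in structure to Theorem~\ref{thm:parametrixLB}, and only the explicit-remainder computation requires the full chain-rule expansion above.
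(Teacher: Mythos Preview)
Your overall strategy matches the paper's: part one reduces to the proof of Theorem~\ref{thm:parametrixLB} via the asymptotics~\eqref{eq:asymptotic_H}, and part two expands $\divg a\gradg K = \gradg a\cdot\gradg K + a\,\LB K$ and applies the neighborhood identity~\eqref{eq:diff_remainderLB}. The paper says exactly this, with essentially no additional detail for part one.

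There is, however, a genuine error in your explicit-remainder computation. You assert that the \emph{three-dimensional} volume Laplacian of $a(\bx')G(r;a(\bx'),c(\bx'))$ equals $\delta(\bx-\bx')-c(\bx')G$ ``by definition of the planar Green's function extended radially to $\bbR^3$.'' This is false: the planar Green's function satisfies the two-dimensional radial equation $G''+r^{-1}G'=-c(\bx')/a(\bx')\,G$, whereas the three-dimensional Laplacian of a radial function is $G''+2r^{-1}G'$. Your $\Delta K$ therefore misses an extra $r^{-1}G'$ term, and following your bookkeeping literally would leave a spurious $-a(\bx)r^{-1}G'$ in the remainder. You flag the $r^{-1}G'$ cross-term as something to ``double-check,'' but the issue is conceptual, not merely clerical.

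The paper avoids this by never invoking $\Delta_3 K$ alone. Instead it computes the \emph{tangential} combination $\Delta K-\partial_{\bs n}^2 K$ directly from the chain rule, obtaining $G''\bigl(1-(\bs n\cdot\hat\br)^2\bigr)+G'\bigl(r^{-1}+(\bs n\cdot\br)^2/r^3\bigr)$; the pair $G''+r^{-1}G'$ then appears naturally and is eliminated via the two-dimensional ODE, yielding~\eqref{eq:explicit_remainder}. If you correct $\Delta_3 G=G''+2r^{-1}G'$ in your version, your computation becomes equivalent to the paper's.
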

\begin{proof}
The proof of~\eqref{eq:diff_form} is exactly the same as the proof
of~\eqref{eq:diff_formLB} because of the asymptotic formula for the gradient
of~$G$, given in~\eqref{eq:asymptotic_H}.

In order to verify~\eqref{eq:explicit_remainder} we assume that~$K(\bx,\bx')$ is
smooth at~$\bx$ and compute:
\begin{equation*}
    \divg a(\bx)\gradg K(\bx,\bx') = \gradg a(\bx) \cdot \gradg K(\bx,\bx')  + a(\bx)\LB K(\bx,\bx').
\end{equation*}
We then use~\eqref{eq:diff_remainderLB} and the fact that~$\gradg =\nabla - \bs
n(\bx) (\bs n(\bx)\cdot \nabla)$ to find that for all~$\bx \neq \bx'$,
\begin{equation}
  \label{eq:diff_remainder}
    \DO K= \gradg a \cdot \nabla K  - 2aH \partial_{\bs n(\bx)} K + a(\Delta K - \partial_{\bs n(\bx)}^2 K).
\end{equation}


In order to continue, we compute some derivatives of~$G(r;a(\bx'),c(\bx '))$. If~$\bs v$ is a unit vector, then
\begin{equation}
     \partial_{\bs v} G\lp r;a(\bx'),c(\bx ')\rp = G'\lp r;a(\bx'),c(\bx ')\rp \, \partial_{\bs v} r ,\label{eq:grad_K}
\end{equation}
and
\begin{equation}
     \partial_{\bs v}^2 G(r;a(\bx'),c(\bx ')) 
     = G''(r;a(\bx'),c(\bx ')) \lp \partial_{\bs v} r\rp^2 + G'(r;a(\bx'),c(\bx ')) \, 
     \partial_{\bs v}^2 r.
\end{equation}
The required derivatives of the distance function are
\begin{equation}
    \partial_{\bs v} r = \frac{\bs v\cdot \br}{r} \quad \text{and} \quad \partial_{\bs v}^2 r=\frac{\bs v\cdot\bs v}{r}-\frac{(\bs v\cdot\br )^2}{r^3}.
\end{equation}
Grouping the second derivatives into the tangential Laplacian gives that
\begin{multline}
  \label{eq:tmp_der}
    \Delta G(r;a(\bx'),c(\bx ')) - \partial_{\bs n(\bx)}^2 G(r;a(\bx'),c(\bx ')) \\
    = G''(r;a(\bx'),c(\bx '))  \lp 1-\frac{ (\bs n(\bx)\cdot\br)^2}{r^2}\rp
    +G'(r;a(\bx'),c(\bx '))\lp\frac{1}{r} + \frac{(\bs n(\bx)\cdot\br)^2}{r^3} \rp.
 \end{multline}
 We now note that by the definition of~$G$, we have that
 \begin{equation}
     a(\bx') \, G''(r;a(\bx'),c(\bx '))
          +\frac{a(\bx')}{r}G'(r; a(\bx'),c(\bx '))  
          + c(\bx ') \, G(r; a(\bx'),c(\bx ')) = \delta(r).
 \end{equation}
 The tangential Laplacian thus becomes
 \begin{multline}
  \label{eq:tang_der}
     \Delta G(r;a(\bx'),c(\bx ')) - \partial_{\bs n(\bx)}^2 G(r;a(\bx'),c(\bx ')) \\
     = - G''(r;a(\bx'),c(\bx '))\frac{(\bs n(\bx)\cdot\br)^2}{r^2} 
  +\frac{(\bs n(\bx)\cdot\br)^2}{r^3} G'(r;a(\bx'),c(\bx ')) \\ -\frac{c(\bx ')}{a(\bx')}G(r;a(\bx'),c(\bx '))
 \end{multline}
 for all~$\bx'\neq\bx$. If we plug~\eqref{eq:diff_remainder} and~\eqref{eq:tang_der} into~\eqref{eq:diff_form}, we get that~$R(\bx,\bx')$ is given by~\eqref{eq:explicit_remainder}.
\end{proof}

When the coefficients~$a$ and~$c$ are constant, the remainder has a simpler
form, which is clear from~\eqref{eq:explicit_remainder}, and the following Hankel function identities:
\begin{equation}
    \partial_z H^{(1)}_0(z)=-H^{(1)}_1(z)\qquad \text{and}\qquad \partial_z^2 H^{(1)}_0(z)-z^{-1}\partial_z H^{(1)}_0(z)=H^{(1)}_2(z).
\end{equation}

\begin{corollary}
If~$a=a_0$ is a non-zero constant and~$c$ is zero, then~$R(\bs x,\bx')= \RLB(\bx,\bx')$.
If~$a=a_0$ and~$c=c_0$ are both non-zero constants, then~\eqref{eq:explicit_remainder} becomes
\begin{equation}
  \label{eq:bdd_remainder_helm}
    R(\bs x,\bs x')=-\frac{c_0}{4 ia_0}H^{(1)}_2\lp\sqrt{\frac {c_0}{a_0}}r\rp\frac{ \left(\bs n(\bx)\cdot (\dx)\right)^2}{r^2} +\frac{\sqrt{c_0}}{2 i\sqrt{a_0}}H^{(1)}_1\lp\sqrt{\frac {c_0}{a_0}}r\rp    H(\bx)\frac{\bs n\cdot(\dx)}{\dxnorm}.
\end{equation}
\end{corollary}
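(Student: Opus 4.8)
The plan is to specialize the general remainder formula~\eqref{eq:explicit_remainder} to the constant-coefficient setting and simplify the resulting scalar derivatives of~$G$ using known Hankel function identities. First I would observe that with $a \equiv a_0$ constant, the term $\gradg a(\bx) \cdot \hat \br \, G'$ vanishes identically, and the term $\lp c(\bx) - \frac{a(\bx)c(\bx')}{a(\bx')}\rp G$ vanishes as well, since $c(\bx) = c(\bx') = c_0$ and $a(\bx) = a(\bx') = a_0$ force the prefactor to zero. This immediately collapses~\eqref{eq:explicit_remainder} to the three surviving terms
\begin{equation*}
  R(\bs x,\bs x') = -a_0 G''(r) \frac{(\bs n(\bx)\cdot \br)^2}{r^2} + a_0 G'(r) \frac{(\bs n(\bx)\cdot \br)^2}{r^3} - 2 a_0 H(\bx) \, \bs n(\bx)\cdot\hat\br \, G'(r),
\end{equation*}
where I abbreviate $G(r) = G(r; a_0, c_0)$. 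For the case $c_0 = 0$ one has $G(r) = \frac{1}{2\pi a_0}\log r$, so $G'(r) = \frac{1}{2\pi a_0 r}$ and $G''(r) = -\frac{1}{2\pi a_0 r^2}$; substituting gives $a_0(G'/r - G'') \frac{(\bs n\cdot\br)^2}{r^2}\cdot r \to \frac{1}{\pi}\lp\frac{\bs n\cdot\br}{r^2}\rp^2$ after collecting, and $-2a_0 H G' \to -\frac{H}{\pi}\frac{\bs n\cdot\br}{r^2}$, matching~\eqref{eq:explicit_remainderLB}, which proves the first assertion of the corollary.

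For the second assertion I would use $G(r; a_0, c_0) = \frac{i}{4a_0} H^{(1)}_0\lp\sqrt{c_0/a_0}\,r\rp$. Writing $\mu = \sqrt{c_0/a_0}$ and applying the chain rule, $G'(r) = \frac{i\mu}{4a_0} \partial_z H^{(1)}_0(\mu r)$ and $G''(r) = \frac{i\mu^2}{4a_0}\partial_z^2 H^{(1)}_0(\mu r)$. The key algebraic step is to recognize that the combination $G'' - G'/r$ appearing (with a sign) in the first two terms of the collapsed $R$ is exactly $\frac{i\mu^2}{4a_0}\lp \partial_z^2 H^{(1)}_0(\mu r) - (\mu r)^{-1}\partial_z H^{(1)}_0(\mu r)\rp = \frac{i\mu^2}{4a_0} H^{(1)}_2(\mu r)$, using the stated identity $\partial_z^2 H^{(1)}_0 - z^{-1}\partial_z H^{(1)}_0 = H^{(1)}_2$. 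This yields the first term of~\eqref{eq:bdd_remainder_helm}, after tracking the overall sign and noting $i\mu^2 a_0 / (4a_0) = i c_0/(4a_0)$, so that $-a_0(G'' - G'/r) = -\frac{i c_0}{4a_0}H^{(1)}_2(\mu r)$; one then rewrites $-i = 1/i$ to match the displayed $-\frac{c_0}{4ia_0}$. For the last term, $-2a_0 H G' = -2a_0 H \cdot \frac{i\mu}{4a_0}\partial_z H^{(1)}_0(\mu r) = \frac{i\mu}{2} H\, H^{(1)}_1(\mu r)$ using $\partial_z H^{(1)}_0 = -H^{(1)}_1$, and again $i\mu/2 = \mu/(2i)\cdot(i^2)^{-1}\cdots$; rewriting gives $\frac{\sqrt{c_0}}{2i\sqrt{a_0}} H^{(1)}_1(\mu r)$, matching the second term.

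The only genuine subtlety — and the step I would be most careful about — is the sign bookkeeping and the identification of the factor $\frac{(\bs n\cdot\br)^2}{r^2}$ versus $\frac{(\bs n\cdot\br)^2}{r^3}$ in the first two terms: one must verify that the $G'/r^3$ and $G''/r^2$ contributions combine so that the $G'$-via-$1/r$ piece is precisely what the Bessel recurrence absorbs into $H^{(1)}_2$, leaving no leftover $G'$ term multiplying $\frac{(\bs n\cdot\br)^2}{r^2}$. This is a routine but error-prone computation; everything else is direct substitution. No new analytic input beyond Theorem~\ref{thm:parametrix} and the quoted Hankel identities is required, so the corollary follows.
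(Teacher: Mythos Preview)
Your approach is exactly the paper's: specialize~\eqref{eq:explicit_remainder} to constant coefficients so that the $\gradg a$ and $\big(c(\bx)-\tfrac{a(\bx)c(\bx')}{a(\bx')}\big)G$ terms drop, then reduce the surviving $G''-G'/r$ combination via the Hankel identity $\partial_z^2 H^{(1)}_0 - z^{-1}\partial_z H^{(1)}_0 = H^{(1)}_2$ and the last term via $\partial_z H^{(1)}_0 = -H^{(1)}_1$. The only place to tighten is precisely the sign and $i$-bookkeeping you already flag as delicate: your line ``$i\mu/2 = \mu/(2i)\cdot(i^2)^{-1}\cdots$'' is muddled, so just carry the constants through cleanly once rather than arguing they ``match'' the displayed form.
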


In order to prove that an integral operator with kernel~$R$ is compact, we
provide the following theorem about the behavior of the remainder term
as~$\bx\to \bx'$:
\begin{theorem}\label{thm:remainder}
If the coefficient~$a$ is constant, then~$R(\bx,\bx')=\cO(1)$ as~$\bx'\to\bx$.
Otherwise,~$R(\bx,\bx')=\cO(\Vert\bx-\bx'\Vert^{-1})$ as~$\bx'\to\bx$.
The function~$K$ is therefore a paramterix in either case.
\end{theorem}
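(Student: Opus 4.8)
The plan is to read the estimate straight off the explicit remainder formula~\eqref{eq:explicit_remainder}, using three ingredients: (i) the small-$r$ behaviour of the planar Green's function and its $r$-derivatives, namely $G(r;a(\bx'),c(\bx')) = \cO(\log r)$, $G'(r;a(\bx'),c(\bx')) = \cO(r^{-1})$, and $G''(r;a(\bx'),c(\bx')) = \cO(r^{-2})$ as $r\to 0$, uniformly for $\bx'$ in a neighbourhood of $\bx$ on $\Gamma$ (this is exact when $c\equiv 0$, where $G = \tfrac{1}{2\pi a(\bx')}\log r$, and follows from the expansions~\eqref{eq:asymptotic_H} together with the chain rule in $z = \sqrt{c(\bx')/a(\bx')}\,r$ when $c\neq 0$, the relevant constants depending continuously on $a(\bx'),c(\bx')$); (ii) the geometric fact, already established in the proof of Theorem~\ref{thm:remainderLB}, that the normal component of $\br = \bx - \bx'$ is quadratically small, $\bs n(\bx)\cdot\br = \cO(r^2)$ as $\bx'\to\bx$ along $\Gamma$, where $r = \|\br\|$; and (iii) the smoothness of $a$ and $c$, which gives $a(\bx')c(\bx) - a(\bx)c(\bx') = \cO(r)$.

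Combining (i) and (ii), I would bound each of the three ``geometric'' terms of~\eqref{eq:explicit_remainder}: $a(\bx)\,G''\,\frac{(\bs n(\bx)\cdot\br)^2}{r^2} = \cO(r^{-2})\,\cO(r^2) = \cO(1)$, $a(\bx)\,G'\,\frac{(\bs n(\bx)\cdot\br)^2}{r^3} = \cO(r^{-1})\,\cO(r) = \cO(1)$, and $2a(\bx)H(\bx)\,\bs n(\bx)\cdot\hat\br\,G' = \cO(1)\,\cO(r)\,\cO(r^{-1}) = \cO(1)$ (here $H$ and $a$ are bounded since $\Gamma$ is smooth and $a$ is positive and continuous; note that none of these three bounds uses that $a$ is constant, and, as in Theorem~\ref{thm:remainderLB}, these terms actually have finite direction-dependent limits governed by $\frac{\bs n(\bx)\cdot\br}{r^2}\to\kappa_\gamma/2$). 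The last term of~\eqref{eq:explicit_remainder} is $\bigl(c(\bx) - \tfrac{a(\bx)}{a(\bx')}c(\bx')\bigr)G = \cO(r)\,\cO(\log r) = \cO(1)$ by (iii) and (i). This leaves only the first term, $\gradg a(\bx)\cdot\hat\br\,G'$: since $|\hat\br| = 1$, it is $\cO(r^{-1})$ in general, and it vanishes identically when $a$ is constant because then $\gradg a \equiv 0$.

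Assembling these yields: when $a$ is constant the first term drops and $R(\bx,\bx') = \cO(1)$ as $\bx'\to\bx$, while for general smooth positive $a$ the first term is the dominant contribution and $R(\bx,\bx') = \cO(r^{-1})$; in particular $R = \cO(\|\bx-\bx'\|^{\alpha})$ with $\alpha = 0$ or $\alpha = -1$, so $\alpha > -2$ in both cases. Since Theorem~\ref{thm:parametrix} already gives the distributional identity~\eqref{eq:diff_form}, $K$ then satisfies the definition of a parametrix for~\eqref{eq:surf_PDE}, which is the claim.

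The main obstacle here is not a single hard estimate but the careful term-by-term accounting: one must pin down that the \emph{only} source of a genuine inverse-power singularity in the remainder is the $\gradg a$ term, and then observe that this is precisely the term that disappears in the constant-coefficient case. A useful consistency check is that when $a$ and $c$ are both constant, the Corollary reduces~\eqref{eq:explicit_remainder} to $\RLB$ (for $c = 0$) or to~\eqref{eq:bdd_remainder_helm} (for $c \neq 0$), both of which are manifestly $\cO(1)$, in agreement with the constant-$a$ case above.
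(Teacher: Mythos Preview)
Your proposal is correct and follows essentially the same approach as the paper: both proofs read the estimate directly off the explicit remainder formula~\eqref{eq:explicit_remainder}, using the small-$r$ asymptotics of $G$, $G'$, $G''$ together with the geometric bound $\bs n(\bx)\cdot\br = \cO(r^2)$ from Theorem~\ref{thm:remainderLB} and the smoothness of $a,c$, and both identify the $\gradg a\cdot\hat\br\,G'$ term as the sole source of the $\cO(r^{-1})$ singularity. The only organizational difference is that the paper first reduces the constant-$a$ cases to Theorem~\ref{thm:remainderLB} via the Corollary and the Hankel asymptotics, whereas you bound all five terms uniformly in one pass; the content is the same.
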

\begin{proof}
When~$a$ is constant and~$c$ is zero, the remainder function is~$R(\bs x,\bx')= \RLB(\bx,\bx')$ and Theorem~\ref{thm:remainderLB} gives that~$R(\bs x,\bx')$ is bounded. 
When~$c$ is not zero, the asymptotic formulas for the derivatives of Hankel functions and~\eqref{eq:explicit_remainder} gives that~$R$ is asymptotic to the~$c=0$ remainder as~$\bx'\to\bx$, so the remainder has the same, bounded behaviour. 

When~$a$ is not constant, we add the term~$\frac{1}{a(\bx')}\gradg a \cdot\gradg
K$ to the remainder function. We may see that this term
is~$\cO (r^{-1})$ through~\eqref{eq:grad_K} and the asymptotic
formula for the derivative of~$G(z;a,c)$. The other non-regular term is
\begin{equation*}
    \lp c(\bx) - \frac{a(\bx)c(\bx')}{a(\bx')} \rp G(\Vert\bx-\bx'\Vert;a(\bx'),c(\bx ')),
\end{equation*}
which is~$\cO(r\log r)$
as~$\bx'\to\bx$. Therefore the~$\cO\lp r^{-1}\rp$ term is the
dominant singularity and~$K$ is a parametrix.
\end{proof}
\begin{remark}
  The stronger singularity when~$a$ is not constant is expected, as it is the
  same as is observed when the equivalent parametrix is used for the
  equation~$\nabla\cdot a\nabla u=f$ in the plane~\cite{Beshley2018}.
\end{remark}
An example of what the remainder kernel looks like when~$c$ is real and not
constant is shown in Figure~\ref{fig:remainder}. As we can see in the
figure, if~$c$ approaches zero, then the remainder function is singular. We
give the exact nature of this singularity in the following theorem, which
follows directly from~\eqref{eq:asymptotic_H}.
\begin{figure}
  \centering
  \includegraphics[width=1\linewidth]{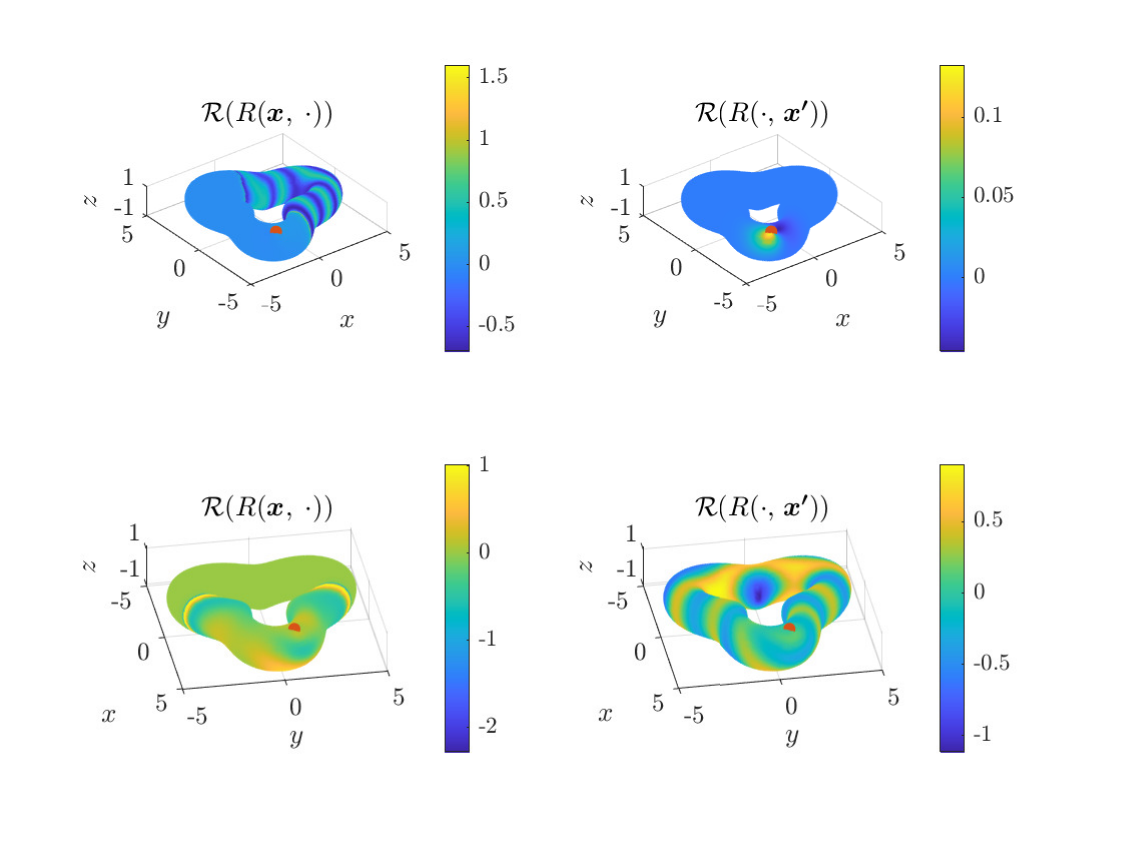}
  \caption[The remainder kernel for the general surface elliptic problems]{The
    figure shows the remainder kernel for an example surface PDE ~$c(x,y,z)=2x$
    and~$a=1$. We see that the remainder function has a singularity at points
    where~$c(\bx)=0$ (the left images) and is a bounded function of~$\bx$ (the
    right images). It is also clear that the remainder inherits the character of
    the PDE: it is oscillatory when~$c>0$ and rapidly decays when~$c<0$.}
    \label{fig:remainder}
\end{figure}

\begin{theorem}
Suppose~$\Gamma_c:=\{\bx\in\Gamma:c(\bx)=0\}$ is not empty and not all of~$\Gamma$. If~$\tilde \bx\in\partial\Gamma_c$ and~$\tilde\bx\neq \bx$, then both~$K(\bx, \bx')$ and~$R(\bx, \bx')$
are~$\cO(\log c(\bx'))$ as~$\bx'\to\tilde\bx$ from outside~$\Gamma_c$.
\end{theorem}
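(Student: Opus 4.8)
The plan is to reduce both estimates to the small-argument expansions for $H^{(1)}_0$ and its derivatives recorded in \eqref{eq:asymptotic_H}. First I would pin down the behavior of the coefficients along the approach. Since $\Gamma_c = c^{-1}(\{0\})$ is closed (by continuity of $c$), its topological boundary satisfies $\partial\Gamma_c\subseteq\Gamma_c$, so $c(\tilde\bx)=0$ and hence $c(\bx')\to 0$ as $\bx'\to\tilde\bx$; at the same time $a(\bx')\to a(\tilde\bx)>0$ and, because $\tilde\bx\neq\bx$, the distance $r:=\|\bx-\bx'\|$ stays bounded and bounded away from $0$. Approaching from \emph{outside} $\Gamma_c$ keeps $c(\bx')\neq0$, so the Hankel-function form of $G$ applies, and the argument $z:=\sqrt{c(\bx')/a(\bx')}\,r$ tends to $0$ (inside the chosen wedge, away from the branch cut). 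The one identity driving everything is the splitting $\log z=\tfrac12\log c(\bx')+\tfrac12\log\!\big(r^2/a(\bx')\big)$, whose second summand is bounded.

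For $K$ itself this is immediate: $K(\bx,\bx')=\tfrac{i}{4a(\bx')}H^{(1)}_0(z)$, and the first line of \eqref{eq:asymptotic_H} gives $K(\bx,\bx')=\tfrac{i}{4a(\bx')}\big(\tfrac{2i}{\pi}\log z+\cO(1)\big)$, which by the splitting above and $a(\bx')$ bounded away from $0$ is $\cO(\log c(\bx'))$.

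For $R$ I would walk through the five terms of \eqref{eq:explicit_remainder}. By the chain rule $G'(r;a,c)=\tfrac{i}{4a}\sqrt{c/a}\,(H^{(1)}_0)'(z)$ and $G''(r;a,c)=\tfrac{i}{4a}(c/a)\,(H^{(1)}_0)''(z)$ with $z=\sqrt{c/a}\,r$; feeding in the second and third lines of \eqref{eq:asymptotic_H}, the singular factors $2i/(\pi z)$ and $-2i/(\pi z^2)$ are cancelled exactly by the prefactors $\sqrt{c/a}$ and $c/a$, so that $G'(r;a(\bx'),c(\bx'))$ and $G''(r;a(\bx'),c(\bx'))$ converge to finite limits (the corrections $\cO(z\log z)$ and $(c/a)\,\cO(\log z)$ vanish since $z\to0$ and $c(\bx')\log c(\bx')\to0$). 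Consequently the first four terms of \eqref{eq:explicit_remainder}, each a product of $G'$ or $G''$ with factors evaluated at the fixed point $\bx$ or with powers of $r$ bounded away from $0$, stay bounded. The fifth term, $\big(c(\bx)-a(\bx)c(\bx')/a(\bx')\big)\,G(r;a(\bx'),c(\bx'))$, has prefactor tending to $c(\bx)$ and the factor $G(r;a(\bx'),c(\bx'))=\cO(\log c(\bx'))$ exactly as for $K$; so this term is $\cO(\log c(\bx'))$. Summing gives $R(\bx,\bx')=\cO(\log c(\bx'))$.

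The only real subtlety — and thus the ``main obstacle'' — is the $G''$ term: at first glance $G''$ threatens to blow up like $z^{-2}\sim c(\bx')^{-1}$, which is worse than logarithmic, but the $c/a$ factor produced by differentiating $G$ twice in $r$ cancels that singularity precisely, leaving a bounded leading part plus a $(c/a)\,\cO(\log z)$ correction that goes to $0$ because $x\log x\to0$ as $x\to0$. Once that cancellation is verified, the rest is bookkeeping: every surviving coefficient is a continuous function evaluated away from its singular set, and only the logarithmic Green's-function contributions — in $K$ and in the fifth term of $R$ — remain unbounded, each at rate $\log c(\bx')$.
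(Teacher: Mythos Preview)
Your proposal is correct and follows exactly the route the paper indicates: the paper simply states that the result ``follows directly from~\eqref{eq:asymptotic_H}'' without further detail, and your argument is precisely the unpacking of that claim, tracking how the prefactors $\sqrt{c/a}$ and $c/a$ on $G'$ and $G''$ cancel the $z^{-1}$ and $z^{-2}$ singularities from the Hankel asymptotics, leaving only the logarithmic contribution from $G$ itself in $K$ and in the fifth term of~\eqref{eq:explicit_remainder}.
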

We are now ready to use our parametrix to find a second kind integral equation form of~\eqref{eq:surf_PDE}.
\begin{theorem}\label{thm:second_kind}
If~$a$ and~$c$ are smooth, then the integral equation
\begin{equation}
  \label{eq:IE}
  \sigma(\bs x) + \int_\Gamma R(\bs x,\bs x')\, \sigma(\bx') \, da(\bx') =  f(\bx), \qquad \text{for } \bx \in \Gamma
\end{equation}
is a second kind Fredholm integral equation on~$L^2(\Gamma)$. Furthermore,
if~$f\in L^2(\Gamma)$ and~$\sigma$ is the solution of~\eqref{eq:IE}, then the
function   
\begin{equation}
u(\bx)=\int_\Gamma K(\bx,\bx') \, \sigma(\bx') \, da(\bx')
\label{eq:representation}
\end{equation}
is the unique solution of~\eqref{eq:surf_PDE}.
\end{theorem}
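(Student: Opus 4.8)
The plan is to mimic the proof of Theorem~\ref{thm:second_kindLB}, with only two changes of substance: the operator is now~$P := \divg a\gradg + c$ in place of~$\LB$, and when~$a$ is not constant the remainder~$R$ is only~$\cO(r^{-1})$ rather than bounded. Write~$\mathcal R$ for the integral operator with kernel~$R$ and~$\mathcal K$ for the operator with kernel~$K$, so that~\eqref{eq:IE} reads~$(I+\mathcal R)\sigma=f$. First I would show that~$\mathcal R\colon L^2(\Gamma)\to L^2(\Gamma)$ is compact. When~$a$ is constant, Theorem~\ref{thm:remainder} gives that~$R$ is bounded, so~$\mathcal R$ is Hilbert--Schmidt and hence compact exactly as in Theorem~\ref{thm:second_kindLB} (Theorem~0.45 in~\cite{Folland1995}). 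When~$a$ varies, Theorem~\ref{thm:remainder} only gives~$R(\bx,\bx')=\cO(\Vert\bx-\bx'\Vert^{-1})$, which is integrable on the (compact, two-dimensional) surface but not square-integrable across the diagonal, so the Hilbert--Schmidt shortcut is unavailable. Instead I would approximate~$\mathcal R$ in operator norm by the truncated operators~$\mathcal R_\varepsilon$ with kernel~$R(\bx,\bx')\,\indic_{\{\Vert\bx-\bx'\Vert>\varepsilon\}}$; each~$\mathcal R_\varepsilon$ has a bounded kernel, hence is Hilbert--Schmidt and compact, and the Schur test gives
\[
  \Vert\mathcal R-\mathcal R_\varepsilon\Vert \le \Bigl(\sup_{\bx}\int_{\Vert\bx-\bx'\Vert<\varepsilon}|R(\bx,\bx')|\,da(\bx')\Bigr)^{1/2}\Bigl(\sup_{\bx'}\int_{\Vert\bx-\bx'\Vert<\varepsilon}|R(\bx,\bx')|\,da(\bx)\Bigr)^{1/2},
\]
which tends to~$0$ as~$\varepsilon\to0$ since~$r^{-1}$ is integrable in two dimensions. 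Thus~$\mathcal R$ is a norm limit of compact operators and so is compact, and in either case~\eqref{eq:IE} is a Fredholm equation of the second kind on~$L^2(\Gamma)$.

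Next I would check that if~$\sigma$ solves~\eqref{eq:IE} then~$u=\mathcal K[\sigma]$ from~\eqref{eq:representation} is a weak solution of~\eqref{eq:surf_PDE}, i.e. that~$\int_\Gamma u\,(P\phi)=\int_\Gamma \phi\,f$ for every smooth test function~$\phi$; this reformulation of~\eqref{eq:weakform} is legitimate because~$\divg a\gradg$ and multiplication by~$c$ are each symmetric for the unconjugated pairing on~$\Gamma$, so one further integration by parts in~\eqref{eq:weakform} moves the remaining derivative onto~$\phi$. The argument then runs exactly as in Theorem~\ref{thm:second_kindLB}: Fubini's theorem---valid since~$K(\bx,\bx')=\cO(\log\Vert\bx-\bx'\Vert)$ has a diagonal singularity that is integrable uniformly in~$\bx'$, while~$\sigma\in L^1(\Gamma)$ and~$P\phi$ is bounded---turns~$\int_\Gamma(\mathcal K[\sigma])\,(P\phi)$ into~$\int_\Gamma\sigma(\bx')\bigl(\int_\Gamma K(\bx,\bx')\,(P\phi)(\bx)\,da(\bx)\bigr)da(\bx')$; the distributional identity~\eqref{eq:diff_form} from Theorem~\ref{thm:parametrix}, with~$P$ differentiating in~$\bx$, rewrites the inner integral as~$\phi(\bx')+\int_\Gamma R(\bx,\bx')\phi(\bx)\,da(\bx)$; and a second Fubini plus relabeling recognizes the outcome as~$\int_\Gamma\phi(\bx)\bigl(\sigma(\bx)+\int_\Gamma R(\bx,\bx')\sigma(\bx')\,da(\bx')\bigr)da(\bx)=\int_\Gamma\phi\,f$. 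Hence~$Pu=f$ weakly, and since the~$\log$ kernel is smoothing, $u$ lies in~$\Ho$, the correct solution space.

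For uniqueness: we have assumed that~\eqref{eq:surf_PDE} has a unique weak solution under the present hypotheses (Theorems~\ref{thm:well-posed},~\ref{thm:Helm_posed}, and~\ref{thm:closed_eval}), and the previous step exhibits~$u=\mathcal K[\sigma]$ as such a solution, so~$u$ is that unique solution. To know that a density~$\sigma$ solving~\eqref{eq:IE} exists at all, I would use the Fredholm alternative: $I+\mathcal R$ is invertible provided it is injective, and if~$(I+\mathcal R)\sigma_0=0$ then, by the previous step with~$f=0$, $\mathcal K[\sigma_0]$ is a weak solution of the homogeneous PDE and hence is~$0$ by well-posedness; since~$I+\mathcal R=P\mathcal K$ on~$L^2(\Gamma)$, this identifies~$\ker(I+\mathcal R)$ with~$\ker\mathcal K$.

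The crux is precisely this last point: passing from~$\mathcal K[\sigma_0]=0$ to~$\sigma_0=0$. In the Laplace--Beltrami case the analogous operator genuinely has a one-dimensional null space, so no purely soft argument can work in general; for~$c\not\equiv0$ one expects~$\mathcal K$ to be injective on~$L^2(\Gamma)$, but proving it is delicate because~$G(\Vert\bx-\bx'\Vert;a,c)$ is a two-dimensional Green's function, \emph{not} a fundamental solution in~$\bbR^3$, so one cannot simply continue the potential off~$\Gamma$ and invoke a volumetric uniqueness theorem; a regularity/bootstrap argument applied to~$\sigma_0=-\mathcal R\sigma_0$ seems to be the way in. The only other technical point requiring care is the stronger~$\cO(r^{-1})$ diagonal singularity of~$R$ when~$a$ varies, which is handled by the truncation-and-Schur argument above in place of the one-line Hilbert--Schmidt bound used for~$\RLB$.
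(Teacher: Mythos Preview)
Your approach to the second half---showing that $u=\mathcal K[\sigma]$ solves~\eqref{eq:surf_PDE} weakly via Fubini and the distributional identity~\eqref{eq:diff_form}---is exactly what the paper does. Your compactness argument for the diagonal singularity (truncate near~$\bx=\bx'$, apply Schur, pass to the norm limit) is a perfectly good substitute for the paper's direct citation of Proposition~3.11 in~\cite{Folland1995}.

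There is, however, a genuine gap in your compactness proof: you have overlooked a second singularity of~$R$. When~$c$ is smooth but vanishes on a proper subset of~$\Gamma$, the parametrix and remainder both blow up like~$\log c(\bx')$ as~$\bx'$ approaches~$\partial\{c=0\}$ from the side where~$c\neq 0$; this is stated as a theorem in the paper just before Theorem~\ref{thm:second_kind}, and it comes from the~$\log z$ behavior of~$H^{(1)}_0(z)$ at the origin. Your truncation near the diagonal does nothing to control this, and your Hilbert--Schmidt claim in the constant-$a$ case can fail for the same reason. The paper's proof truncates instead near~$\gamma_c:=\partial\{c=0\}$, shows each truncated operator~$\mathcal R_\epsilon$ is compact (now only the~$\cO(r^{-1})$ diagonal singularity remains), and then argues~$\mathcal R_\epsilon\to\mathcal R$ in operator norm because the~$\log c(\bx')$ singularity is integrable. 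You need to add this second truncation, or else add a hypothesis that~$c$ is either identically zero or bounded away from zero.

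One smaller point: your closing discussion of existence of~$\sigma$ via the Fredholm alternative and injectivity of~$\mathcal K$ is not required by the theorem as stated, which only asserts that \emph{if}~$\sigma$ solves~\eqref{eq:IE} then~$u$ solves the PDE. The paper makes no attempt to show~$I+\mathcal R$ is invertible here, and indeed in the Laplace--Beltrami case it is not (the null space is conjectured to be exactly one-dimensional). So you may safely drop that paragraph.
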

\begin{proof}
We begin by proving that the integral operator 
\begin{equation}
    \mathcal R [\sigma] (\bs x)=\int_\Gamma R(\bs x,\bs x') \, \sigma(\bx') \, 
    da(\bx')
\end{equation}
is a compact map from~$L^2(\Gamma)$ to itself. The remainder function has two
singularities: (1) when~$\bx'\to\bx$ and, if it is non empty, (2) when~$\bx'$
approaches~$\gamma_c$, the boundary of the
subset~$\Gamma_c=\left\{ \tilde{\bx}\in \Gamma:c(\tilde{\bx})=0\right\}$. In order to simplify our
discussion, we start by ignoring the contribution of an~$\epsilon$ neighborhood
of~$\gamma_c$. This allows to focus on the singularity when $\bx' \to \bx$. What
remains is an integral operator~$\mathcal R_\epsilon$ with the kernel
\begin{equation}
    R_\epsilon (\bx,\bx'):=\begin{cases} R(\bx,\bx')& \operatorname{dist}(\bx',\gamma_c)>\epsilon\\
    0  & \text{otherwise}
    \end{cases}.
\end{equation}

For any~$\epsilon>0$,~$R_\epsilon$ is bounded apart from
a~$O(r^{-1})$ singularity at~$\bx'\approx\bx$
(Theorem~\ref{thm:remainder}), so Proposition 3.11 in~\cite{Folland1995} gives
that the operator~$\mathcal R_\epsilon$ is compact on~$L^2(\Gamma)$. The
singularity that occurs when~$c(\bx')\to 0$ is integrable, so by the argument in
the proof of Proposition 3.10 in~\cite{Folland1995},~$\mathcal R$ is
the~$L^2(\Gamma)$-norm limit of~$\mathcal R_\epsilon$. The operator~$\mathcal R$
is therefore a compact map from~$L^2(\Gamma)$ to itself. The
equation~\eqref{eq:IE} is thus a second kind Fredholm integral equation.
  
The proof that the~$u$ defined in~\eqref{eq:representation}
solves~\eqref{eq:surf_PDE} is exactly the same as in
Theorem~\ref{thm:second_kindLB}, except that it starts from the the weak form
of~\eqref{eq:surf_PDE} :
\begin{equation}
  \begin{aligned}
     B(u,\phi):=& \int_\Gamma \lp -a\gradg u \cdot \gradg\bar\phi 
     + c\, u \bar\phi \rp\\
     =& \int_\Gamma f\, \bar\phi,
  \end{aligned}
\end{equation}
for all~$\phi \in H^1(\Gamma)$.
\end{proof}

\subsection{Elliptic problems with advection}
\label{sec:nonzerob}
In some cases, we wish to consider surface PDEs with a first order term, such as
the semi-discretization of an advection-diffusion equation with an
implicit time stepping method. In this section, we describe how the above
integral equation form can easily be extended to this case, which corresponds to
introducing a non-zero~$\bs b$ into~\eqref{eq:surf_PDE}. 

\begin{theorem}\label{thm:advectionparametrix}
  The function~$K$ defined in~\eqref{eq:diff_param} is a parametrix
  for~\eqref{eq:surf_PDE} with remainder function
  \begin{equation}
    \label{eq:adv_remainder}
        R_{\bs b}(\bx,\bx'):= R(\bx,\bx') + \bs b \cdot \gradg K(\bx,\bx'),
      \end{equation}
      where the function~$R$ is defined in~\eqref{eq:bdd_remainder_helm}.
\end{theorem}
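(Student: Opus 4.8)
The plan is to verify that $K$ satisfies the parametrix defining relation \eqref{eq:parametrix_def} for the full operator $\cL = \DO + \bs b \cdot \gradg + c$, and that the corresponding remainder $R_{\bs b}$ is integrable as $\bx' \to \bx$. First I would apply the operator $\cL$ to $K(\bx,\bx')$ in the distributional sense. Since $\cL$ differs from the advection-free operator $\DO + c$ only by the addition of the zeroth- and first-order term $\bs b \cdot \gradg$, and $\bs b \cdot \gradg K(\bx,\bx')$ introduces no new delta-function contribution (the gradient of $K$ is only $\cO(r^{-1})$, hence locally integrable, with no distributional mass at $\bx = \bx'$), we get
\begin{equation}
  \cL K(\bx,\bx') = \bigl(\DO + c\bigr) K(\bx,\bx') + \bs b \cdot \gradg K(\bx,\bx') = \delta(\bx - \bx') + R(\bx,\bx') + \bs b \cdot \gradg K(\bx,\bx'),
\end{equation}
using Theorem~\ref{thm:parametrix} for the middle equality. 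This is exactly \eqref{eq:diff_form} with $R$ replaced by $R_{\bs b}$ as defined in \eqref{eq:adv_remainder}.

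Next I would check that $R_{\bs b}$ satisfies the integrability condition in the definition of a parametrix, i.e. that $R_{\bs b}(\bx,\bx') = \cO(r^{\alpha})$ with $\alpha > -2$ as $\bx' \to \bx$. By Theorem~\ref{thm:remainder}, $R(\bx,\bx')$ is either $\cO(1)$ (if $a$ is constant) or $\cO(r^{-1})$ (otherwise). For the new term, the asymptotic expansions \eqref{eq:asymptotic_H} (or the logarithmic form of $G$ when $c = 0$) together with \eqref{eq:grad_K} give $\gradg K(\bx,\bx') = \cO(r^{-1})$, and since $\bs b$ is a continuous (hence locally bounded) tangential vector field, $\bs b \cdot \gradg K(\bx,\bx') = \cO(r^{-1})$ as well. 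Hence $R_{\bs b}(\bx,\bx') = \cO(r^{-1})$ in all cases, and $-1 > -2$, so $K$ is a parametrix for \eqref{eq:surf_PDE} with advection. One should also note that the contribution near $\Gamma_c$ (where $c$ vanishes) is unchanged in character, being at worst $\cO(\log c(\bx'))$, which is integrable, so it does not affect the conclusion.

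I do not expect a serious obstacle here; the result is essentially a one-line perturbation of Theorem~\ref{thm:parametrix} and Theorem~\ref{thm:remainder}. The only point requiring a little care is making rigorous the claim that adding a first-order differential operator applied to $K$ contributes no distributional mass at the diagonal — that is, that $\bs b \cdot \gradg K(\bx,\bx')$, interpreted as a distribution in $\bx$, is just the locally integrable function given by the pointwise formula, with no extra delta or principal-value term. This follows because $\gradg K$ has only an $r^{-1}$ singularity in two surface dimensions, which is locally integrable, so pairing against a test function $\phi$ and taking the limit of the integral over $\Gamma \setminus B_R(\bx')$ as $R \to 0$ produces no boundary contribution (the boundary integral over $C_R(\bx')$ scales like $R \cdot R^{-1} = \cO(1)$ times $\phi - \phi(\bx') = \cO(R)$, which vanishes); this mirrors the vanishing-term arguments already used in the proof of Theorem~\ref{thm:parametrixLB}. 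Everything else is bookkeeping with the asymptotic expansions already recorded.
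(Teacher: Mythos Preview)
Your proposal is correct and follows essentially the same approach as the paper's proof: observe that the first-order term $\bs b \cdot \gradg K$ contributes no delta mass and simply adds to the remainder, then check that the new term is $\cO(r^{-1})$ and hence integrable. You have in fact supplied more detail than the paper on why the distributional contribution at the diagonal vanishes, which is a reasonable point to make explicit.
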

\begin{proof}
Adding a lower-order term to the equation does not change the proof of the fact
that~$K$ satisfies~\eqref{eq:parametrix_def}. The advection term is additive,
so~\eqref{eq:adv_remainder} follows immediately from the definition of the
remainder function. Finally, we note that the term~$\bs b \cdot \gradg K$ has a
singularity of~$\cO(\Vert\bx-\bx'\Vert^{-1})$ as~$\bx'\to \bx$, which still
allows for~$K$ to be a parametrix.
\end{proof}
With the knowledge that~$K$ is a parametrix, we can use it to derive an integral
equation form of~\eqref{eq:surf_PDE} in the same manner as
Theorem~\ref{thm:second_kind}.

\begin{theorem}
The integral equation
\begin{equation}
  \label{eq:IE_adv}
    \sigma(\bs x) + \int_\Gamma R_{\bs b}(\bs x,\bs x')\, \sigma(\bx') \, da(\bx') =  f(\bx), \qquad \text{for } \bx \in \Gamma,
\end{equation}
is a second-kind Fredholm integral equation on~$L^2(\Gamma)$.  Furthermore, if~$f\in L^2(\Gamma)$ and~$\sigma$ is the solution of~\eqref{eq:IE_adv}, then the function~$u$ given by~\eqref{eq:representation}
is the unique solution of~\eqref{eq:surf_PDE}.
\end{theorem}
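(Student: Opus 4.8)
The plan is to run the two-step argument of Theorem~\ref{thm:second_kind} essentially verbatim, carrying the extra advection term along at each stage. For the first claim I would show that the operator $\mathcal R_{\bs b}[\sigma](\bx):=\int_\Gamma R_{\bs b}(\bx,\bx')\sigma(\bx')\,da(\bx')$ is compact on $L^2(\Gamma)$, so that \eqref{eq:IE_adv} is $\cI$ plus a compact operator. By \eqref{eq:adv_remainder} we have $R_{\bs b}=R+\bs b\cdot\gradg K$, and the operator with kernel $R$ is already known to be compact on $L^2(\Gamma)$ from the proof of Theorem~\ref{thm:second_kind}; hence it suffices to treat the kernel $\bs b\cdot\gradg K(\bx,\bx')$ (the surface gradient acting on the first variable). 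Since $\bs b$ is continuous, hence bounded, on $\Gamma$, and the proof of Theorem~\ref{thm:advectionparametrix} records that $\bs b\cdot\gradg K(\bx,\bx')=\cO(\Vert\bx-\bx'\Vert^{-1})$ as $\bx'\to\bx$, this kernel is weakly singular of order $r^{-1}$, more regular than the $r^{-2}$ borderline for a two-dimensional surface, so the associated operator is compact on $L^2(\Gamma)$ by the same reasoning (Proposition~3.11 in~\cite{Folland1995}) used for the $\cO(r^{-1})$ contribution to $\mathcal R$ in Theorem~\ref{thm:second_kind}. Where $\Gamma_c$ is nonempty, near its boundary $\gamma_c$ the kernel $R_{\bs b}$ has the same $\cO(\log c(\bx'))$, hence integrable, behaviour as $R$ (the term $\bs b\cdot\gradg K$ being bounded there, away from the diagonal), so the truncation-and-$L^2$-limit device from Theorem~\ref{thm:second_kind} (Proposition~3.10 in~\cite{Folland1995}) applies without change. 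Therefore $\cI+\mathcal R_{\bs b}$ is Fredholm of the second kind on $L^2(\Gamma)$.

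For the second claim, given a solution $\sigma$ of \eqref{eq:IE_adv} I would verify that $u$ defined by \eqref{eq:representation} satisfies the weak form \eqref{eq:weakform} of \eqref{eq:surf_PDE} with the $\bs b$-term retained (tested against smooth $\phi$, hence against all of $H^1(\Gamma)$ by density; the weakly singular $K$ maps $L^2(\Gamma)$ into $H^1(\Gamma)$, so $u\in H^1(\Gamma)$). The computation copies that of Theorem~\ref{thm:second_kindLB}: integrate by parts on the closed surface $\Gamma$ to move every derivative off $u$ and onto $\phi$, turning the left-hand side of \eqref{eq:weakform} into $\int_\Gamma u\,(\mathcal L^{T}\phi)$, where $\mathcal L=\divg a\gradg+\bs b\cdot\gradg+c$ and its formal transpose is $\mathcal L^{T}\phi=\divg(a\gradg\phi)-\divg(\phi\,\bs b)+c\,\phi$; substitute $u(\bx)=\int_\Gamma K(\bx,\bx')\sigma(\bx')\,da(\bx')$ and apply Fubini (legitimate because $K$ is weakly singular and $\sigma\in L^2(\Gamma)$); then invoke the distributional identity of Theorem~\ref{thm:advectionparametrix}, namely $\divg a(\bx)\gradg K(\bx,\bx')+\bs b\cdot\gradg K(\bx,\bx')+c(\bx)K(\bx,\bx')=\delta(\bx-\bx')+R_{\bs b}(\bx,\bx')$ in the variable $\bx$, which is precisely the statement that $\int_\Gamma K(\bx,\bx')\,(\mathcal L^{T}\phi)(\bx)\,da(\bx)=\phi(\bx')+\int_\Gamma R_{\bs b}(\bx,\bx')\phi(\bx)\,da(\bx)$. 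A further use of Fubini and a relabelling then reduce the left-hand side of \eqref{eq:weakform} to $\int_\Gamma\phi(\bx)\bigl(\sigma(\bx)+\mathcal R_{\bs b}[\sigma](\bx)\bigr)\,da(\bx)$, which \eqref{eq:IE_adv} collapses to $\int_\Gamma f\,\phi$. Hence $u$ solves \eqref{eq:surf_PDE} weakly, and by the standing assumption that \eqref{eq:surf_PDE} is uniquely solvable, $u$ is its unique solution.

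I expect the only genuinely new point --- everything else being a transcription of Theorems~\ref{thm:second_kind} and~\ref{thm:second_kindLB} --- to be the bookkeeping around the first-order term: confirming that carrying $\bs b\cdot\gradg$ from $u$ onto $\phi$ produces exactly the term $-\divg(\phi\,\bs b)$, with no surface-boundary contribution (this is where closedness of $\Gamma$ enters) and no residual leftover, so that integrating by parts a second time returns precisely the $\bs b\cdot\gradg K$ term in the parametrix identity of Theorem~\ref{thm:advectionparametrix}; the additional check that $\bs b\cdot\gradg K$ contributes no singularity to $R_{\bs b}$ worse than $\cO(r^{-1})$ is already supplied by Theorem~\ref{thm:advectionparametrix}. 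As in the $c\neq 0$ case of Theorem~\ref{thm:second_kind}, since the weak form is tested against every $\phi\in H^1(\Gamma)$ one may work throughout with the bilinear pairing and the formal transpose, which avoids any complex-conjugation fuss coming from a complex-valued $c$. All of the singular-integral estimates needed --- both for the compactness step and for justifying the two integrations by parts --- are contained in Theorem~\ref{thm:advectionparametrix} and in the proof of Theorem~\ref{thm:second_kind}, so no fresh analysis of $R_{\bs b}$ is required.
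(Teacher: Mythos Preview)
Your proposal is correct and follows essentially the same approach as the paper: the paper's own proof is a two-sentence remark that the additional term in~$R_{\bs b}$ is no more singular than~$R$ (so~\eqref{eq:IE_adv} remains second-kind) and that the second part proceeds exactly as in Theorem~\ref{thm:second_kind}. You have simply unpacked that remark in full detail, including the formal-transpose bookkeeping for the advection term, which the paper leaves implicit.
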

\begin{proof}
    Since the additional term in~$R_{\bs b}$ is not more singular than~$R$, the integral
    equation~\eqref{eq:IE_adv} will still be second-kind. The proof of the
    second part follows in the same manner as in the case without advection.
\end{proof}

\section{Elliptic boundary value problems on surfaces}
\label{sec:BVP} 

We now address the situation of surface PDEs on open surfaces with boundary
conditions applied at surface edges. We suppose that~$\Gamma$ is an open subset of a
larger closed surface~$\tilde \Gamma$ whose boundary is comprised of smooth and
non-intersecting closed curves, see Figure~\ref{fig:island} for an example of
such a surface. Throughout this section, we let~$\gamma:=\partial\Gamma$ be the
boundary of the surface and~$\bs \nu$ be the binormal vector
to~$\gamma$, i.e. the normalized vector field on~$\gamma$ that
is defined to be tangent to~$\Gamma$, perpendicular to~$\gamma$, and to
point away from~$\Gamma$. We shall also assume that~$\Gamma$ does not include it's boundary, and so is an open subset of~$\tilde \Gamma$.

We start by briefly discussing the existence of solutions to such problems. We
then consider problems with Neumann boundary conditions imposed at~$\Gamma$, and
then move on to the case where Dirichlet boundary conditions are imposed
on~$\gamma$.

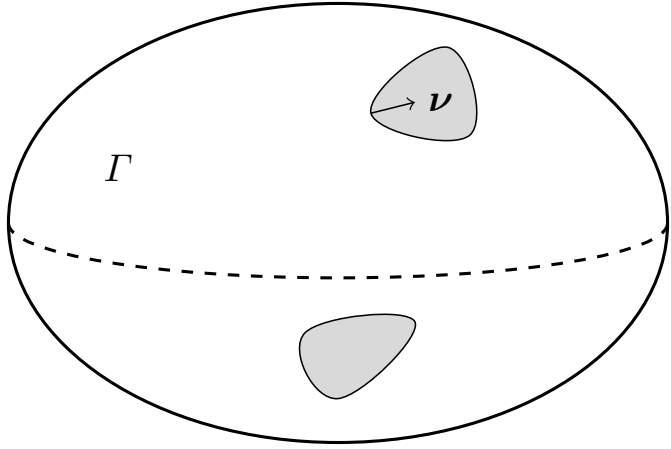
\begin{figure}
  \centering
  \resizebox{.6\linewidth}{!}{
    \begin{tikzpicture}
        \draw[thick, black] (0,0) [partial ellipse=0:360:3cm and 2cm];
        \draw[thick, black,dashed] (0,0) [partial ellipse=180:360:3cm and 0.5cm];

        \filldraw[fill=gray!30, draw=black] plot [smooth cycle, tension=0.8] coordinates {(1.2,0.8) (1,1.6) (0.3,1)};

                \filldraw[fill=gray!30, draw=black] plot [smooth cycle, tension=0.8] coordinates {(0.7,-0.9) (0,-1.6) (-0.3,-1)};
        \draw[->, draw=black] (0.3,1) -- (0.7,1.1) node[right]{$\bs \nu$};

        \node(label) at (-2,.5) {$\Gamma$};
      \end{tikzpicture}
      }
    \caption[A surface with boundaries]{This figure shows an example of an open
      surface~$\Gamma$, which is a subset of the ellipsoid~$\tilde\Gamma$ with
      two patches removed. It also shows the binormal vector~$\bs\nu$, which is
      normal to the~$\partial\Gamma$, but tangent to $\Gamma$.}
    \label{fig:island}
\end{figure}

\subsection{Existence of solutions}

Before we continue, we note that
Theorems~\ref{thm:well-posed},~\ref{thm:Helm_posed}, and~\ref{thm:closed_eval}
can be extended to problems on open surfaces where boundary conditions are
applied on the edge~$\gamma$. In order to study these in detail, we first define
the trace operator, the weaker analogue of the restriction operator. 
\begin{definition}[Trace]
  Suppose~$\by$ is a smooth local parameterization of~$\bar \Gamma$ and that~$\gamma_{\by}$ is the intersection of $\gamma$ and the range of~$\by$. If~$u\in H^1(\Gamma)$, then the trace
  of~$u$ is defined as
\begin{equation}
    \trace_{\gamma_{\by}}u:= \trace_{\by^{-1}(\gamma_{\by})} (u\circ \by)\circ \by^{-1},
\end{equation}
where~$\trace_{\by^{-1}(\gamma_{\by})}$ is the usual trace operator for curves in~$\bbR^2$ (see Theorem~18.1 of~\cite{Leoni2017}).

\end{definition}
By Theorem~18.1 of~\cite{Leoni2017}, we know that the range of the trace
operator is~$H^{\frac{1}{2}}(\gamma_{\bs y})$, which may be defined through the
use of a parameterization and the usual one dimensional Sobolev
space~$H^{\frac12}([0,L])$. To define the trace onto all of~$\gamma$, we can use a partition of unity to combine the traces onto individual portions $\gamma_{\by_i}$.
For this to be well defined, we must show that the trace is
parameterization-independent.

\begin{theorem}\label{thm:trace-welldefined}
    The operator~$\operatorname{tr}_{\gamma_{\by}}$ is independent of the local parameterization used in its definition.
\end{theorem}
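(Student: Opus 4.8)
The plan is to reduce the assertion to the overlap of two parameterizations and then transplant the question to $\bbR^2$, where it becomes the statement that the classical boundary-trace operator of Theorem~18.1 of~\cite{Leoni2017} intertwines with smooth changes of variables. Let $\by_1,\by_2$ be two smooth local parameterizations of $\bar\Gamma$ whose ranges overlap in an open set $W$, put $\gamma_W := \gamma\cap W$, and let $\Psi := \by_2^{-1}\circ\by_1$ be the transition map. Then $\Psi$ is a $C^\infty$ diffeomorphism between open subsets of $\bbR^2$; it maps $\Omega_1 := \by_1^{-1}(\Gamma\cap W)$ onto $\Omega_2 := \by_2^{-1}(\Gamma\cap W)$ and the sub-arc $\Gamma_1 := \by_1^{-1}(\gamma_W)\subset\partial\Omega_1$ onto $\Gamma_2 := \by_2^{-1}(\gamma_W)\subset\partial\Omega_2$. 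Shrinking $W$ about a fixed point of $\gamma_W$, we may take $\Omega_1,\Omega_2$ to be relatively compact with smooth boundary (since $\gamma$ is smooth), so that $\Psi$ extends to a smooth diffeomorphism of $\bar\Omega_1$ onto $\bar\Omega_2$ with all derivatives bounded. By the locality of the trace, it is enough to prove that $\trace_{\Gamma_1}(u\circ\by_1)\circ\by_1^{-1}$ and $\trace_{\Gamma_2}(u\circ\by_2)\circ\by_2^{-1}$ agree on $\gamma_W$ for every $u\in\Ho$, the traces being taken in the small domains $\Omega_1,\Omega_2$.

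The key step is the intertwining identity
\begin{equation*}
  \trace_{\Gamma_1}\big((u\circ\by_2)\circ\Psi\big)
  = \big(\trace_{\Gamma_2}(u\circ\by_2)\big)\circ\big(\Psi|_{\Gamma_1}\big).
\end{equation*}
I would establish it by density. By the Meyers--Serrin theorem, choose $g_k\in C^\infty(\bar\Omega_2)$ with $g_k\to u\circ\by_2$ in $H^1(\Omega_2)$; continuity of the boundary trace gives $g_k|_{\Gamma_2}\to\trace_{\Gamma_2}(u\circ\by_2)$ in $L^2(\Gamma_2)$. Composition with $\Psi$ is bounded on $H^1$ by the chain rule and the boundedness of the Jacobian, so $g_k\circ\Psi\to(u\circ\by_2)\circ\Psi = u\circ\by_1$ in $H^1(\Omega_1)$, and hence $(g_k\circ\Psi)|_{\Gamma_1}\to\trace_{\Gamma_1}(u\circ\by_1)$ in $L^2(\Gamma_1)$. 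But for the smooth functions $g_k$ one has $(g_k\circ\Psi)|_{\Gamma_1} = (g_k|_{\Gamma_2})\circ(\Psi|_{\Gamma_1})$, and $\Psi|_{\Gamma_1}\colon\Gamma_1\to\Gamma_2$ is a smooth bijection with smooth inverse, so composition with it is bounded from $L^2(\Gamma_2)$ to $L^2(\Gamma_1)$; letting $k\to\infty$ yields the displayed identity as an equality of $L^2$ (hence a.e.) functions on $\Gamma_1$.

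Granting this, the theorem follows by unwinding the definition of the trace. On $\gamma_W$ we have $u\circ\by_1 = (u\circ\by_2)\circ\Psi$ and $\Psi\circ\by_1^{-1} = \by_2^{-1}$, so
\begin{equation*}
  \trace_{\Gamma_1}(u\circ\by_1)\circ\by_1^{-1}
  = \Big(\big(\trace_{\Gamma_2}(u\circ\by_2)\big)\circ\Psi\Big)\circ\by_1^{-1}
  = \big(\trace_{\Gamma_2}(u\circ\by_2)\big)\circ\by_2^{-1},
\end{equation*}
which proves that $\trace_{\gamma_{\by}}$ does not depend on the chosen parameterization. The main obstacle is the intertwining identity: one must control the change of variables simultaneously on the bulk space $H^1$ and on the curve (whose $H^{1/2}$ structure is itself parameterization-dependent). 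Passing to $L^2$ on the curve, where boundedness of composition by a smooth bijection is elementary, sidesteps that circularity; the remaining points---locality of the trace, smoothness of $\Psi$ up to the boundary, and the Meyers--Serrin reduction to smooth approximants---are all standard, so this is the only step requiring real care.
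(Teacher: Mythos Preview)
Your proof is correct and follows essentially the same strategy as the paper: reduce to smooth functions by density and observe that for smooth functions the trace is just pointwise restriction, which is manifestly parameterization-independent. The only difference is organizational---the paper argues intrinsically on the surface (showing $C^\infty(\tilde\Gamma)$ is dense in $H^1(\tilde\Gamma)$ and then invoking density directly), whereas you pull everything down to the planar parameter domains and phrase the density step as an intertwining identity for the transition map $\Psi$; your version is more explicit about the continuity needed to pass to the limit, while the paper's is terser.
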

\begin{proof}
Let~$\by$ and~$\bs z$ be two smooth local parameterizations that map the
domains~$U$ and~$V$ onto~$\bar{\tilde\Gamma}\subset\bar\Gamma$, and
such that~$\gamma_{\by} = \gamma_{\bs z}$. We begin by showing
that~$C^\infty(\tilde\Gamma)$ is dense in~$H^1(\tilde\Gamma)$. By the
definitions of the surface gradient and~$H^1(\tilde\Gamma)$, we have that
\begin{equation}
  \begin{aligned}
    H^1(\tilde\Gamma)&=\left\{f\in L^2(\tilde\Gamma)| f\circ\by \in H^1(U)\right\},\\
    C^\infty(\tilde\Gamma)&=\left\{f\in C(\tilde\Gamma)| f\circ\by \in C^\infty(U)\right\}.
  \end{aligned}
\end{equation} 
Standard density results in the plane then imply that~$C^\infty(\tilde\Gamma)$
is dense in~$H^1(\tilde\Gamma)$,  see \cite{Evans2010}.

Next, we note that if~$v\in C^\infty(\tilde\Gamma)$,
then
\begin{equation}
    \trace_{\by^{-1}(\gamma_{\by})} (v\circ \by)\circ \by^{-1} = \trace_{\bs z^{-1}(\gamma_{\bs z})} (v\circ \bs z)\circ \bs z^{-1},
\end{equation}
and therefore~$\trace_{\gamma_{\by}}v$ is independent of the smooth local
parameterization used in its definition. The density of
of~$C^\infty(\tilde\Gamma)$ then implies that the trace operator is independent
of the local parameterization.
\end{proof}
With this theorem, we can extend our definition of the trace onto the whole of~$\gamma$. Further, by choosing a standard set of
parameterizations, it is clear that the trace operator maps~$\Ho$
to~$H^{\frac12}(\gamma)\subset L^2(\gamma)$.

With the trace defined, we are now ready to prove the well-posedness of some
boundary value problems along surfaces.
When the Neumann boundary conditions 
\begin{equation}
     \bs \nu\cdot \gradg u|_{\gamma}=f_\gamma
\end{equation}
are applied, the proofs of the above existence results, Theorems~\ref{thm:well-posed}, \ref{thm:Helm_posed}, and~\ref{thm:closed_eval}, are the same except that the conjugate linear form is
replaced by
\begin{equation}
  L(\rho):=\int_\Gamma \bar \rho \, f+\int_\gamma \bar \rho \, f_\gamma.
\end{equation}
This conjugate linear form is still bounded on~$H^1(\Gamma)$ because the trace
operator is bounded.

We now consider the case where Dirichlet boundary conditions
\begin{equation}
    \left.u\right|_{\gamma} = f_\gamma,
\end{equation}
are applied. The proofs for when $h= 0$ are the same as the case without
boundaries, except that $u$ to must be restricted to live in the subspace of
trace-zero functions. For the inhomogeneous case, we must first show that for
any $f_\gamma\in C^1(\gamma),$ there exists a $u_{f_\gamma}\in \Ho$, whose trace is $f_\gamma$.

To see that~$u_{f_\gamma}$ exists, let~$\Pi(\bx)$ be the point on~$\gamma$ closest
to~$\bx$ in the Euclidean norm and let~$\epsilon(\Pi(\bx))$ be a distance such
that if $\operatorname{dist}(\bx,\gamma)<\epsilon(\Pi(\bx))$, then $\Pi(\bx)$ is
unique. Finally, let~$\eta(r)$ be a smooth function on~$[0,\infty)$ that is 1 at
$r=0$ and that is identically zero for $r>1$. A suitable choice of~$u_{f_\gamma}$ can be
written as
\begin{equation*}
  u_{f_\gamma}(\bx) = f_\gamma(\Pi(\bx)) \,
  \eta\lp\frac{\Vert\bx-\Pi(\bx)\Vert^2}{\epsilon(\Pi(\bx))^2}\rp.
\end{equation*}
This function will be continuously differentiable on the closure of~$\Gamma$
because the surface is smooth and $\Pi(\bx)$ will vary smoothly on the support of~$u$. This will be enough to guarantee that~$u_{f_\gamma}\in\Ho$. By the
linearity of~\eqref{eq:surf_PDE} and~\eqref{eq:Dirichlet_Bc}, we can then apply
the analogues of Theorems~\ref{thm:well-posed},~\ref{thm:Helm_posed},
and~\ref{thm:closed_eval} to $u-u_{f_\gamma}$ and see that the solution exists if and
only if it exists in the case where~$f_\gamma= 0$.

\subsection{Neumann boundary conditions}

In this section, we derive an integral equation supposing that Neumann
boundary conditions are imposed on~$\gamma$:
\begin{equation}
    \bs \nu\cdot \gradg u|_{\gamma}=f_\gamma, \label{eq:Neumann_BCs}
\end{equation}
where~$f_\gamma$ is given data. Our goal in this section will be to use the
parametrix~\eqref{eq:diff_param} to convert
equations~\eqref{eq:surf_PDE} and~\eqref{eq:Neumann_BCs} into a system
of integral equations.

In order to enforce the boundary conditions, we must include a boundary term in
our representation of the solution~$u$. The new augmented representation is
\begin{equation}
    u = \cK_{\Gamma}[\sigma] + \cK_{\gamma} [\mu] , \label{eq:BC_rep}
\end{equation}
where~$\cK_\Gamma$ is the surface operator
\begin{equation}
    \cK_{\Gamma}[\sigma] (\bx) := \int_{\Gamma}K(\bx,\bx') \, \sigma(\bx') \, da(\bx'), \quad\forall \bx\in\Gamma
\end{equation}
and~$\cK_\gamma$ is the edge operator
\begin{equation}
   \cK_{\gamma} [\mu] (\bx):= \int_{\gamma}K(\bx,\bx')\, \mu(\bx') \, ds(\bx'), \quad\forall \bx\in\Gamma.
\end{equation}
%
%
The following lemma gives how the differential operator in~\eqref{eq:surf_PDE}
acts on this representation.

\begin{lemma}\label{lem:surf_term}
If~$u$ is given by~\eqref{eq:BC_rep},~$\sigma\in L^2(\Gamma)$, and~$ \mu\in L^2(\gamma)$, then
\begin{equation}
  \label{eq:diff_repr}
    \DO u +\bs b\cdot\gradg u+ cu = \sigma+ \cR_{\Gamma}[\sigma] + \cR_{\gamma} [\mu],
\end{equation}
in the interior of~$\Gamma$, where
\begin{equation}
  \begin{aligned}
    \cR_{\Gamma}[\sigma] (\bx) :=& \int_{\Gamma}R(\bx,\bx') \, \sigma(\bx') \, da(\bx'), \\
    \cR_{\gamma}[\mu] (\bx):=& \int_{\gamma}R(\bx,\bx') \, \mu(\bx')\, ds(\bx'), \qquad
     \text{for all } \bx\in\Gamma.
   \end{aligned}
 \end{equation}
\end{lemma}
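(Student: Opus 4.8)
The plan is to reduce the claim to the already-established action of the differential operator on the volume potential~$\cK_\Gamma[\sigma]$, which was computed in Theorem~\ref{thm:second_kind} (and its advection analogue), and to treat the new edge term~$\cK_\gamma[\mu]$ by exactly the same argument applied now to a one-dimensional source distribution along~$\gamma$. Since the differential operator in~\eqref{eq:surf_PDE} is linear, it suffices to show separately that
\begin{equation}
  \DO \cK_\Gamma[\sigma] + \bs b\cdot\gradg \cK_\Gamma[\sigma] + c\,\cK_\Gamma[\sigma] = \sigma + \cR_\Gamma[\sigma]
\end{equation}
in the interior of~$\Gamma$, and that
\begin{equation}
  \DO \cK_\gamma[\mu] + \bs b\cdot\gradg \cK_\gamma[\mu] + c\,\cK_\gamma[\mu] = \cR_\gamma[\mu]
\end{equation}
in the interior of~$\Gamma$, i.e.\ away from~$\gamma$. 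The first identity is precisely what was shown in the proof of Theorem~\ref{thm:second_kind} (with the advection term handled as in Section~\ref{sec:nonzerob}), so nothing new is required there.

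For the edge term, I would first observe that for~$\bx$ in the open surface~$\Gamma$, the kernel~$K(\bx,\bx')$ is a smooth function of~$\bx$ for every~$\bx'\in\gamma$, since~$\bx\neq\bx'$ and~$K$ is smooth off the diagonal (Theorem~\ref{thm:parametrix}). Hence differentiation may be taken under the integral sign, and by the pointwise identity~\eqref{eq:diff_form}--\eqref{eq:explicit_remainder} — with the Dirac term~$\delta(\bx-\bx')$ never activating because~$\bx\notin\gamma$ — we get
\begin{equation}
  \left(\DO + \bs b\cdot\gradg + c\right)\cK_\gamma[\mu](\bx)
  = \int_\gamma \left( R(\bx,\bx') + \bs b\cdot\gradg K(\bx,\bx') \right)\mu(\bx')\,ds(\bx')
  = \cR_\gamma[\mu](\bx),
\end{equation}
where~$R$ here should be read with the advection contribution folded in exactly as in~\eqref{eq:adv_remainder}; in the excerpt's notation, since Lemma~\ref{lem:surf_term}'s~$\cR_\gamma$ is defined with the same~$R$ as~$\cR_\Gamma$, one keeps the~$\bs b\cdot\gradg K$ piece explicit or absorbs it, consistently with whichever convention the surrounding text fixes. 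To make the differentiation-under-the-integral step rigorous I would note that~$\gamma$ is compact, that for~$\bx$ ranging in a compact subset of the open surface~$\Gamma$ the distance~$\operatorname{dist}(\bx,\gamma)$ is bounded below, and that on the corresponding region all~$\bx$-derivatives of~$K(\bx,\bx')$ up to second order are bounded uniformly in~$\bx'\in\gamma$; dominated convergence then justifies interchanging~$\DO$, $\gradg$, and the line integral, and the bound~$\mu\in L^2(\gamma)$ on a compact curve makes the integrals finite.

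The only genuinely delicate point is bookkeeping rather than analysis: one must be careful that the statement is asserted only in the \emph{interior} of~$\Gamma$, so that no jump relations or boundary traces of the single-layer-type edge potential~$\cK_\gamma[\mu]$ enter here — those are deferred to when the boundary conditions are actually imposed. I expect the main obstacle, such as it is, to be verifying the uniform-in-$\bx'$ smoothness and boundedness of the derivatives of~$K$ on compact interior subsets with enough care to license differentiation under the integral sign; everything else is an immediate consequence of linearity and the pointwise parametrix identity already proved in Theorems~\ref{thm:parametrix} and~\ref{thm:advectionparametrix}.
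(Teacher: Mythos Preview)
Your proposal is correct and follows essentially the same approach as the paper: split by linearity, invoke Theorem~\ref{thm:second_kind} for the surface term, and differentiate under the integral for the edge term since the kernel is smooth for~$\bx$ bounded away from~$\gamma$. The paper's proof is terser but makes explicit one small point you gloss over: it extends~$\sigma$ by zero to the ambient closed surface~$\tilde\Gamma$ so that Theorem~\ref{thm:second_kind}, which was stated and proved for closed surfaces, applies directly to~$\cK_\Gamma[\sigma]$.
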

\begin{proof}
  We note that
  $$\cK_{\Gamma}[\sigma](\bx)= \int_{\tilde\Gamma} K(\bx,\bx') \,
  \tilde\sigma(\bx') \, da(\bx'),$$ where~$\tilde\sigma$ is the
  zero-extension of~$\sigma$ to the rest of~$\tilde
  \Gamma$. Theorem~\ref{thm:second_kind} thus gives the first two
  terms of~\eqref{eq:diff_repr}. For the other term, we note that
  since each~$\bx\in\Gamma$ is a finite distance away from~$\gamma$,
  the kernel of~$\cK_\gamma$ is smooth in a neighborhood of $\bx$ and
  we may simply pull the derivative inside the integral.
\end{proof}

To enforce the boundary conditions, we need the following two lemmas.
\begin{lemma}\label{lem:gradK}
If~$\sigma\in L^2(\Gamma)$, then~$\cK_\Gamma[\sigma] \in H^2(\Gamma)$. Further, if~$\sigma\in L^\infty(\Gamma)$ then
\begin{equation}
  \label{eq:normal_limit}
  \lim_{\bx\to\tilde\bx\in\gamma} {\bs\nu}(\tilde\bx)\cdot\gradg \cK_\Gamma[\sigma]
  (\bx) = \int_{\Gamma} \bs \nu(\tilde\bx)\cdot\gradg K(\bx,\bx')\,
  \sigma(\bx') \, da(\bx').
\end{equation}
\end{lemma}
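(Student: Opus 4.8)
The plan is to split the statement into its two assertions and handle them separately, both by reducing to known mapping properties of layer potentials in the plane via a local parameterization. For the first assertion, that $\cK_\Gamma[\sigma]\in H^2(\Gamma)$ when $\sigma\in L^2(\Gamma)$, I would argue that $\cK_\Gamma$ differs from the classical volume/single-layer-type potential with a Calder\'on–Zygmund kernel of order $-1$ by a smooth perturbation. Indeed, $K(\bx,\bx') = G(\|\bx-\bx'\|; a(\bx'), c(\bx'))$ is, for $c\ne 0$, a Hankel-function kernel whose leading singularity is $\tfrac{1}{2\pi a(\bx')}\log\|\bx-\bx'\|$ (the $c=0$ case is exactly this), and the logarithmic kernel is the standard weakly singular kernel whose associated surface operator maps $L^2(\Gamma)\to H^2(\Gamma)$ (this is the two-dimensional analogue of the classical result that the Newtonian-type potential gains two derivatives; see e.g.\ the elliptic regularity cited in the proof of Theorem~\ref{thm:second_kind}, or standard pseudodifferential operator theory for the surface single-layer operator). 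The correction terms coming from the smooth dependence of $a$ and $c$ on $\bx'$ and from the higher-order terms in the Hankel asymptotics~\eqref{eq:asymptotic_H} are no more singular, so they also map $L^2(\Gamma)$ into $H^2(\Gamma)$. Since $H^2(\Gamma)\subset C^0(\Gamma)$ is too weak to take a pointwise normal derivative limit, this is exactly why the hypothesis is strengthened to $\sigma\in L^\infty(\Gamma)$ for the second assertion.

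For the limit formula~\eqref{eq:normal_limit}, the key observation is that, unlike the normal derivative of the \emph{double}-layer potential, the quantity $\bs\nu(\tilde\bx)\cdot\gradg K(\bx,\bx')$ has only an integrable singularity as $\bx'\to\bx$: from~\eqref{eq:grad_K} and the asymptotics of $G'$, one has $\gradg_{\bx} K(\bx,\bx') = \cO(\|\bx-\bx'\|^{-1})$, which is integrable over the two-dimensional surface $\Gamma$. Thus there is no jump relation to contend with — the operator $\sigma\mapsto \int_\Gamma \bs\nu(\tilde\bx)\cdot\gradg K(\cdot,\bx')\,\sigma(\bx')\,da(\bx')$ extends continuously up to $\gamma$. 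Concretely, I would fix $\tilde\bx\in\gamma$, write $\gradg\cK_\Gamma[\sigma](\bx) = \int_\Gamma \gradg_{\bx} K(\bx,\bx')\,\sigma(\bx')\,da(\bx')$ (justified since $\cK_\Gamma[\sigma]\in H^2$, so its surface gradient is represented by the differentiated kernel, at least in the $L^2$ sense, and then pointwise a.e.), dot with $\bs\nu(\tilde\bx)$, and show that the integrand converges in an appropriate dominated sense as $\bx\to\tilde\bx$. Splitting $\Gamma$ into a small geodesic disk $B_\delta(\tilde\bx)\cap\Gamma$ and its complement: on the complement the kernel is smooth and continuity of the integral in $\bx$ is immediate; on the disk one uses the uniform bound $|\bs\nu(\tilde\bx)\cdot\gradg_{\bx} K(\bx,\bx')|\le C\|\bx-\bx'\|^{-1}$, together with $\sigma\in L^\infty$, to bound the contribution by $C\|\sigma\|_{L^\infty}\int_{B_\delta} \|\bx-\bx'\|^{-1}\,da(\bx') \le C'\|\sigma\|_{L^\infty}\,\delta$, uniformly in $\bx$ near $\tilde\bx$. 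Letting $\delta\to0$ after $\bx\to\tilde\bx$ gives the result.

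The main obstacle I anticipate is making the bound $|\bs\nu(\tilde\bx)\cdot\gradg_{\bx} K(\bx,\bx')|\le C\|\bx-\bx'\|^{-1}$ \emph{uniform} in $\bx$ ranging over a one-sided neighborhood of $\tilde\bx$ in $\Gamma$, and in $\bx'\in\Gamma$, rather than just along $\Gamma$ for $\bx$ fixed on the surface. This requires controlling $\gradg_{\bx} K(\bx,\bx')$ for $\bx$ slightly off the surface (or, after using a geodesic parameterization, for the parameter point approaching the boundary of the parameter domain), which in turn needs the smooth extension of $a$, $c$, and the geometric quantities ($\bs n$, $H$, the metric $g$) to a tubular neighborhood of $\Gamma$, exactly as was used to make sense of~\eqref{eq:diff_remainderLB}. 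A secondary technical point is justifying the interchange that lets one write $\gradg\cK_\Gamma[\sigma]$ as the integral of the differentiated kernel when $\sigma$ is merely $L^\infty$ (not smooth); this follows by approximating $\sigma$ in $L^2$ by smooth functions, using the $L^2\to H^2$ boundedness from the first part, and noting that the differentiated-kernel integral is continuous in $\sigma$ with respect to the $L^2$ norm because $\|\bx-\bx'\|^{-1}\in L^2(\Gamma)$ uniformly in $\bx$. Once these uniformity estimates are in hand, the convergence argument is routine.
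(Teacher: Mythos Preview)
Your proposal is correct but takes a more hands-on route than the paper on both counts. For the $H^2$ regularity, the paper does not invoke mapping properties of the logarithmic kernel directly; instead it observes that, by Lemma~\ref{lem:surf_term} (applied after extending $\sigma$ by zero to the ambient closed surface $\tilde\Gamma$), the elliptic operator $\DO + \bs b\cdot\gradg + c$ sends $\cK_\Gamma[\sigma]$ into $L^2$, and then cites elliptic regularity on the closed manifold (Theorem~6.30 of~\cite{Warner2013}) to conclude $\cK_\Gamma[\sigma]\in H^2$. For the limit formula, the paper again extends to $\tilde\Gamma$ and cites Proposition~3.12 of~\cite{Folland1995}, which says that an integral operator with kernel bounded by $C\|\bx-\bx'\|^{-1}$ on a two-dimensional compact manifold maps $L^\infty$ into $C^0$; continuity of $\gradg\cK_\Gamma[\sigma]$ on all of $\tilde\Gamma$ then makes the one-sided limit at $\gamma$ trivial. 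Your near/far splitting argument is essentially a reproof of that proposition, and the uniformity worry you flag---controlling the kernel as $\bx$ approaches $\gamma$---is precisely what the extension-to-$\tilde\Gamma$ trick dissolves: once $\sigma$ is extended by zero, $\gamma$ sits in the interior of $\tilde\Gamma$, $\bx$ never leaves the closed surface, and no one-sided analysis is needed. The paper's route is shorter because it recycles Lemma~\ref{lem:surf_term} and the ambient closed surface already built into the setup; your approach is more self-contained and would survive on an open surface not assumed to embed in any closed $\tilde\Gamma$.
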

\begin{proof}
  By the previous lemma, it is clear
  that~$ \lp\DO +\bs b\cdot\gradg + c\rp \cK_\Gamma [\sigma]\in L^2(\Gamma)$.
  Theorem 6.30 of~\cite{Warner2013} then gives
  that~$\cK_\Gamma[\sigma] \in H^2(\Gamma)$.

  To compute the gradient of~$K_\Gamma$, we extend our integral operators to be
  defined on~$\tilde\Gamma$ by extending~$\sigma$ by zero. We can then repeat
  the argument used in the proof of Theorem~\ref{thm:second_kindLB} to find that
  \begin{equation}
    \label{eq:gradKgamma}
    \gradg \cK_\Gamma[\sigma](\bx) =  \int_\Gamma \int_\Gamma \gradg K(\bx,\bx')
    \, \sigma(\bx') \, da(\bx').
  \end{equation}

  We now assume that~$\sigma$ is in~$L^\infty(\Gamma)$. Proposition 3.12
  in~\cite{Folland1995} applied on~$\tilde\Gamma$ gives
  that~$\gradg \cK_\Gamma[\sigma]$ is a continuous vector field on~$\tilde\Gamma$,
  and so the limit can simply be evaluated. Dotting the limiting value
  with~$\bs \nu(\tilde\bx)$ gives~\eqref{eq:normal_limit}.
\end{proof}
\begin{lemma}\label{lem:jumpcdt}
If~$\mu\in C(\gamma)$, then
\begin{equation}
  \lim_{\bx\to\tilde{\bx}\in \gamma} \bs \nu(\tilde\bx) \cdot \gradg \cK_{\gamma}[\mu]
  = -\frac1{2a(\tilde\bx)} \mu(\tilde\bx) +\cK'_{\gamma}[\mu] (\tilde\bx) \label{eq:neum_lim}
\end{equation}
for all~$\tilde\bx\in\gamma$,
where 
\begin{equation}
  \cK'_{\gamma} [\mu] (\bx)= \int_{\gamma}\bs \nu(\bx) \cdot \gradg K(\bx,\bx')
  (\bx') \, ds(\bx'), \qquad\text{for all } \bx\in\gamma.
\end{equation}

\end{lemma}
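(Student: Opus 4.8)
The plan is to read~\eqref{eq:neum_lim} as a standard single-layer jump relation, with the factor $1/(2a(\tilde\bx))$ arising from the leading behaviour $K(\bx,\bx')=\tfrac{1}{2\pi a(\bx')}\log\Vert\bx-\bx'\Vert+K_{\mathrm r}(\bx,\bx')$, where $K_{\mathrm r}$ has a bounded tangential $\bx$-gradient. When $c(\bx')=0$ this decomposition is exact with $K_{\mathrm r}=0$; when $c(\bx')\neq0$, the expansion~\eqref{eq:asymptotic_H} of $H^{(1)}_0$ and of $\tfrac{d}{dz}H^{(1)}_0$ gives $G(r;a,c)-\tfrac{1}{2\pi a}\log r=\cO(1)$ and $\partial_r G(r;a,c)-\tfrac{1}{2\pi a r}=\cO(r\log r)$, so $\gradg_\bx K_{\mathrm r}(\bx,\bx')=\cO(\Vert\bx-\bx'\Vert\log\Vert\bx-\bx'\Vert)$. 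Hence the operator $\mu\mapsto\int_\gamma\bs\nu(\bx)\cdot\gradg_\bx K_{\mathrm r}(\bx,\bx')\,\mu(\bx')\,ds(\bx')$ has a continuous kernel and, by Proposition~3.12 of~\cite{Folland1995} applied as in the proof of Lemma~\ref{lem:gradK}, extends continuously up to $\gamma$: it contributes to $\cK'_\gamma[\mu](\tilde\bx)$ with no jump. A Taylor expansion of the smooth positive function $a$ then lets us replace $a(\bx')$ by $a(\tilde\bx)$ in the leading term at the cost of a further bounded, hence jump-free, error, using also that $\bs\nu(\tilde\bx)$ lies in $T_{\tilde\bx}\Gamma$ so the tangential projection in $\gradg_\bx\log\Vert\bx-\bx'\Vert$ disappears after dotting with $\bs\nu(\tilde\bx)$. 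It thus suffices to analyse, as $\bx\to\tilde\bx$ within $\Gamma$,
\begin{equation*}
  J(\bx):=\frac{1}{2\pi a(\tilde\bx)}\int_{\gamma\cap B_\delta(\tilde\bx)} \frac{\bs\nu(\tilde\bx)\cdot(\bx-\bx')}{\Vert\bx-\bx'\Vert^2}\,\mu(\bx')\,ds(\bx'),
\end{equation*}
the complementary integral over $\gamma\setminus B_\delta(\tilde\bx)$ having a kernel that is smooth near $\tilde\bx$, so there the limit passes inside and produces the far part of $\cK'_\gamma[\mu](\tilde\bx)$.

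For $J(\bx)$ I would parametrise $\gamma$ near $\tilde\bx$ by arc length $t\mapsto\gamma(t)$ with $\gamma(0)=\tilde\bx$ and approach $\tilde\bx$ along the $\Gamma$-normal to $\gamma$ (legitimate since the limiting value is non-tangential, i.e. independent of the approach path from a fixed side), writing $\bx=\bx(h)$ with $\Vert\bx(h)-\tilde\bx\Vert\sim h$. Expanding gives $\bs\nu(\tilde\bx)\cdot(\bx-\gamma(t))=h+\cO(t^2)$ and $\Vert\bx-\gamma(t)\Vert^2=h^2+t^2+\cO(t^3)+\cO(ht^2)$, so the integrand splits into the Poisson-type kernel $\tfrac{h}{h^2+t^2}$ and remainders that are uniformly integrable in $t$ over $(-\delta,\delta)$ and, as $h\to0$, converge to the diagonal contribution already absorbed into $\cK'_\gamma$. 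Writing $\mu(\gamma(t))=\mu(\tilde\bx)+(\mu(\gamma(t))-\mu(\tilde\bx))$ and using that $\tfrac{|h|}{h^2+t^2}$ is a nonnegative approximate identity of total mass $\le\pi$ on $(-\delta,\delta)$, continuity of $\mu$ annihilates the second term, while $\int_{-\delta}^{\delta}\tfrac{h}{h^2+t^2}\,dt=2\,\mathrm{sgn}(h)\arctan(\delta/|h|)\to\mp\pi$. With the stated convention that $\bs\nu$ points out of $\Gamma$ and $\bx\in\Gamma$ (so $\bx$ approaches $\tilde\bx$ from the side opposite $\bs\nu$), the relevant limit is $-\pi$, which yields $\lim J(\bx)=-\tfrac{1}{2a(\tilde\bx)}\mu(\tilde\bx)$ plus terms that assemble into the $B_\delta(\tilde\bx)$-part of $\cK'_\gamma[\mu](\tilde\bx)$. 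Combining with the far-part contribution and letting $\delta\to0$ gives~\eqref{eq:neum_lim}; along the way one also checks that the kernel $\bs\nu(\bx)\cdot\gradg K(\bx,\bx')$ is bounded near the diagonal (its most singular piece $\tfrac{1}{2\pi a(\bx')}\tfrac{\bs\nu(\bx)\cdot(\bx-\bx')}{\Vert\bx-\bx'\Vert^2}$ has an $\cO(t^2)$ numerator over $\gamma$, controlled by the geodesic curvature of $\gamma$ in $\Gamma$), so that $\cK'_\gamma[\mu]$ is an ordinary convergent integral and is itself continuous for $\mu\in C(\gamma)$.

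An equivalent and bookkeeping-lighter route is to pull the whole computation back through a smooth local parameterisation $\by$ of a neighbourhood of $\tilde\bx$ in $\tilde\Gamma$: then $\cK_\gamma[\mu]\circ\by$ is, up to the Jacobian factor $\sqrt{\det g}$ along $\by^{-1}(\gamma)$ and the constant $1/a(\tilde\bx)$, a planar single-layer potential with leading kernel $\tfrac{1}{2\pi}\log|\cdot|$, $\bs\nu$ pulls back to the in-plane normal of the planar curve $\by^{-1}(\gamma)$, and the claim reduces to the classical normal-derivative jump relation for the two-dimensional single-layer potential (the same concentration argument underlying Proposition~3.12 of~\cite{Folland1995}), rescaled by $1/a(\tilde\bx)$. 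I would present the proof in this form. The main obstacle is not the limiting integral itself --- that is the textbook Poisson-kernel concentration --- but verifying \emph{uniformly} that every geometric and coefficient correction (the variation of $\bs\nu$ along $\gamma$, the deviation of $\Vert\bx-\bx'\Vert$ from the arc-length parameter, the geodesic curvature of $\gamma$ in $\Gamma$, the non-constant $a$, and the tangential projection implicit in $\gradg$) contributes only terms that are bounded near the diagonal and converge to precisely the direct value $\cK'_\gamma[\mu](\tilde\bx)$, so that no spurious extra jump is introduced; the parameterisation route is preferable precisely because it collapses this to tracking the single factor $\sqrt{\det g}$ through a standard computation.
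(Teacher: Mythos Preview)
Your proposal is correct and follows essentially the same route as the paper: localise near $\tilde\bx$, peel off the leading $\tfrac{1}{2\pi a}\log r$ singularity, reduce to a Poisson-type kernel in local coordinates, and obtain the $-\tfrac{1}{2a(\tilde\bx)}$ jump from the approximate-identity integral while the bounded remainders assemble into $\cK'_\gamma[\mu](\tilde\bx)$. The paper differs only in bookkeeping: it works in a general parameterisation $\by(s,t)$ and tracks the metric $g$ explicitly through the computation (showing the cancellation $\int k_s\to-\tfrac12$ directly), approaches along an arbitrary ray $\by(ah,bh)$ rather than asserting non-tangential independence, and first proves the formula for Lipschitz $\mu$ before extending to $C(\gamma)$ by density using boundedness of the $\cK'_\gamma$ kernel; your arc-length/binormal choice and direct approximate-identity argument for continuous $\mu$ are a legitimate streamlining of the same calculation.
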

\begin{proof}
  We begin by showing~\eqref{eq:neum_lim} in the Laplace-Beltrami case.

  Let~$\by = \by(s,t)$ be a smooth parameterization of~$\Gamma$
  with domain~$V\subset\bbR^2$ such that~$\Gamma_V$, the range of~$\by$, contains the point~$\tilde \bx$. Also let $\tilde{\bs\nu}=\bs\nu(\tilde\bx)$. We shall consider the limit
\begin{equation}
    \lim_{\substack{\bx\to \tilde \bx\\ \bx\in \Gamma\setminus\gamma}}\tilde{\bs\nu}\cdot\gradg \cK_{\gamma}[\mu \, \chi_{\Gamma_V}] (\bx),
\end{equation}
  where $\chi_{\Gamma_V}$ is the characteristic function
  for $\Gamma_V$. The limit of the remaining part,
  \begin{equation}
    \lim_{\substack{\bx \to \tilde \bx\\ \bx\in \Gamma\setminus\gamma}}\tilde{\bs\nu}\cdot\gradg \cK_{\gamma}[\mu\lp 1-\chi_{\Gamma_V}\rp] (\bx),
\end{equation}
  may easily be computed by the dominated convergence theorem, since the kernel
  of $K_\gamma$ is smooth for $\bx\in \Gamma\setminus\Gamma_V$.

 Without loss of generality, we shall also
  suppose that~$\by$ maps~$(0,0)$ to~$\tilde\bx$ and maps the line
  segment~$[-L,L]\times\{0\}$ to~$\gamma\cap\Gamma_V$. Under this
  parameterization, the integral becomes
\begin{equation}
  \tilde{\bs\nu}\cdot\gradg \cK_{\gamma}[\mu \chi_{\Gamma_V}](\bx)
  = \frac{1}{2\pi}\int_{-L}^L \frac{\tilde{\bs\nu}\cdot (\bx - \by(s,0))}{\Vert\bx - \by(s,0)\Vert^2} \mu(\by(s,0)) \, \sqrt{g_{ss}} \, ds.
\end{equation}
We consider test points~$\bx$ that approach~$\by(0,0)$ along the
curve~$\by(ah,bh)$ and Taylor expand the kernel
of~$\tilde{\bs\nu}\cdot\gradg \cK_{\gamma}$ in the vicinity of~$(0,0)$:
\begin{multline}
   \frac{1}{2\pi}\frac{\tilde{\bs\nu}\cdot (\by(ah,bh) - \by(s,0))}{\Vert\by(ah,bh) - \by(s,0)\Vert^2}\sqrt{g_{ss}(s,t)} \\
   = \frac{1}{2\pi}\frac{\lp\tilde{\bs\nu}\cdot \nabla\by(0,0)\rp \cdot
     (ah-s,bh)+\cO(h^2+s^2)}{(ah-s,bh)^T \nabla\by(0,0)^T \nabla\by(0,0)(ah-s,bh)+\cO(h^3+s^3)}\lp \sqrt{g_{ss}(0,0)}+\cO(s) \rp.\label{eq:normal_kernel}
\end{multline}
We now note that
$\tilde{\bs\nu}=-g^{-1}_{ss}(0,0)^{-1/2} \nabla \by(0,0) g^{-1}(0,0)\hat e_t$, where~$\hat e_t\in\bbR^2$ is the standard basis vector in the~$t$ direction. We thus have that~$\tilde{\bs\nu}\cdot \nabla \by = -g^{-1}_{ss} \hat e_t$, so the kernel~\eqref{eq:normal_kernel} can thus
be written as
\begin{multline}
  \frac{1}{2\pi}\frac{\tilde{\bs\nu}\cdot
    (\by(ah,bh) - \by(s,0))}{\Vert\by(ah,bh) - \by(s,0)\Vert^2}\sqrt{g_{ss}(s,t)} \\
  = \frac{-1}{2\pi}\sqrt{\frac{g_{ss}(0,0)}{g^{-1}_{tt}(0,0)}}
  \frac{bh}{(ah-s,bh)^T g(0,0)(ah-s,bh)}+k_b(s,h),
\end{multline}
where~$k_b(s,h)=\cO(1)$ as~$s,h\to 0$ because~$g$ is positive definite.

To examine the limit as~$h\to 0$, we split the kernel into two parts: the
singular part,~$k_s(s,h)$ and the bounded part~$k_b(s,h)$. Expanding the
denominator of $k_s$ gives that
\begin{equation*}
    k_s(s,h) =\frac{-1}{2\pi}\sqrt{\frac{g_{ss}}{g^{-1}_{tt}}}\frac{bh}{(ah-s)^2g_{ss} + 2bh(ah-s)g_{st}+b^2h^2g_{tt}},
\end{equation*}
 where the metric $g$ is evaluated at $(0,0)$.
Substituting~$\tilde h = b\sqrt{g_{tt}/g_{ss}} h$, gives the slightly nicer expression
\begin{equation*}
    k_s(s,h) = \frac{-1}{2\pi}\frac{1}{\sqrt{g_{tt}\lp g^{-1}\rp_{ss}}}\frac{\tilde h}{\lp\frac{a}b\sqrt{\frac{g_{ss}}{g_{tt}}}\tilde h-s\rp^2 + 2\tilde h\lp\frac{a}b\sqrt{\frac{g_{ss}}{g_{tt}}} \tilde h-s\rp\frac{g_{st}}{\sqrt{g_{ss}g_{tt}}}+\tilde h^2}.
\end{equation*}
Since~$\tilde h>0$ we can analytically compute
\begin{equation*}
  \begin{aligned}
   \int_{-L}^L k_s(s,h)ds&=\int_{-\infty}^\infty k_s(s,h) \, ds -\int_{\bbR\setminus[-L,L]}k_s(s,h) \, ds\\
   &= \frac{-1}{2\pi}\frac{1}{\sqrt{g_{tt}\frac{g_{ss}}{g_{ss}g_{tt}-g_{st}^2}}} \frac{2\pi}{\sqrt{4-\frac{4g_{st}^2}{g_{ss}g_{tt}}}}-\int_{\bbR\setminus[-L,L]}k_s(s,h) \, ds\\
  &=-\frac{1}{2}-\int_{\bbR\setminus[-L,L]}k_s(s,h) \, ds.
  \end{aligned}
\end{equation*}
 Since 
 \begin{equation}
     k_s(s,h)\sim \frac{-1}{2\pi}\frac{1}{\sqrt{g_{tt}\lp g^{-1}\rp_{tt}}}\frac{\tilde h}{s^2}
 \end{equation} 
 for~$s$ larger than~$\tilde h$, the dominated convergence theorem gives that
\begin{equation*}
    \lim_{h\to 0^+}\int_{-L}^L k_s(s,h) \, ds=-\frac{1}2.
  \end{equation*}
We now compute the integral with a Lipschitz continuous density~$\mu$:
\begin{equation*}
\begin{aligned}
    \lim_{h\to 0^+}\int_{-L}^L k_s(s,h) \, \mu(\bx(s,0)) \, ds
    =&\lim_{h\to 0^+}\int_{-L}^L k_s(s,h)\, (\mu(\bx(s,0))-\mu(\tilde\bx)) \, ds \\
    &\qquad \qquad  + \lim_{h\to 0^+}\int_{-L}^L k_s(s,h)\, \mu(\tilde\bx) \, ds \\
    =& \lim_{h\to 0^+}\int_{-L}^L k_s(s,h)\, (\mu(\bx(s,0))-\mu(\tilde\bx)) \, ds -\frac{\mu(\tilde\bx)}2.
  \end{aligned}
\end{equation*}
    
Since~$\mu$ is Lipschitz continuous, we can apply the dominated convergence theorem provided $k_s(s,h)s$ is bounded. Clearly,~$k_s(s,h)s$ is a rational function with a second order zero
at~$(0,0)$ in the numerator and denominator (since~$g$ is positive definite). It
is thus a bounded function of~$s,h$. We may use the dominated convergence
theorem and the fact that~$k_s(s,0)=0$ to see
that
\begin{equation}
  \lim_{h\to 0^+}\int_{-L}^L k_s(s,h) \, \mu(\bx(s,0)) \, du=-\frac{\mu(\tilde\bx)}2.
\end{equation}

For the contribution of~$k_b(s,h),$ we note that, since~$k_b(s,h)=\cO(1)$
as~$h\to 0$, we may use the dominated convergence theorem to conclude that
\begin{equation}
  \begin{aligned}
   \lim_{\bs y\to\tilde\bx} \partial_{\tilde{\bs\nu}}\cK_{\gamma}[\mu \chi_{\Gamma_V}](\bx)
   &= -\frac{1}2 \mu(\by(0,0)) + \int_{-L}^L \frac{\tilde{\bs\nu}\cdot (\by(0,0) - \by(s,0))}{\Vert\by(0,0) - \by(s,0)\Vert^2} \mu(\by(s,0)) \, \sqrt{g_{ss}} \, ds\\
   &=-\frac{1}2 \mu(\tilde\bx) +\cK'_\gamma[\mu\chi_{\Gamma_V}](\tilde\bx) .
 \end{aligned}
\end{equation}

The above argument also applies to the case when $c\neq 0$ because the parametrix has the same singularity as in the Laplace-Beltrami case. Since introducing an~$a\neq 1$ divides the parametrix by that~$a$ and neither~$a$ nor~$\bs b$ change the singularity, we have~\eqref{eq:neum_lim}.

Finally, we note that since the kernel of~$\mathcal{K}'_\gamma$ is bounded, Proposition 3.10 in~\cite{Folland1995} gives that~$\mathcal{K}'_\gamma$ is bounded on~$L^\infty(\gamma)$. The density of Lipschitz continuous functions then shows that~\eqref{eq:neum_lim} holds for all continuous~$\mu$.
\end{proof}

Combining the above two lemmas gives the following theorem.
\begin{theorem}\label{thm:NeumannBCsrep}
If~$u$ is given by~\eqref{eq:BC_rep},~$\sigma\in L^\infty(\Gamma),$ and~$\mu\in C(\gamma)$, then
\begin{equation}
  \label{eq:neumann}
   \bs \nu\cdot\gradg u|_{\gamma} = -\frac 1{2a}\mu +  \cK'_{\Gamma}[\sigma] + \mathcal{K}'_{\gamma} [\mu],
\end{equation}
where
\begin{equation}
  \cK'_\Gamma [\sigma](\bx) := \int_\Gamma \bs \nu(\bx)\cdot\gradg K(\bx,\bx') \,
  \sigma(\bx'), \qquad \text{for all } \bx\in \gamma.
\end{equation}
\end{theorem}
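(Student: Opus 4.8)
The plan is to obtain~\eqref{eq:neumann} directly from Lemmas~\ref{lem:gradK} and~\ref{lem:jumpcdt}, using only the linearity of $\gradg$ and of the boundary limit. First I would apply $\gradg$ to the representation~\eqref{eq:BC_rep}, which on the interior of $\Gamma$ gives
\begin{equation*}
  \gradg u = \gradg \cK_\Gamma[\sigma] + \gradg \cK_\gamma[\mu].
\end{equation*}
This is meaningful: by Lemma~\ref{lem:gradK} the surface-layer potential $\cK_\Gamma[\sigma]$ lies in $H^2(\Gamma)$, while the edge-layer potential $\cK_\gamma[\mu]$ is defined by a kernel that is smooth for $\bx$ away from $\gamma$, so its surface gradient exists classically on the open surface. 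I then fix $\tilde\bx\in\gamma$, let $\bx\to\tilde\bx$ from within $\Gamma\setminus\gamma$, and dot with $\bs\nu(\tilde\bx)$.

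The two terms are handled separately by the two lemmas. For the surface-layer term, the hypothesis $\sigma\in L^\infty(\Gamma)$ places us in the situation of~\eqref{eq:normal_limit}, so $\bs\nu(\tilde\bx)\cdot\gradg\cK_\Gamma[\sigma]$ extends continuously up to $\gamma$ with boundary value $\int_\Gamma \bs\nu(\tilde\bx)\cdot\gradg K(\tilde\bx,\bx')\,\sigma(\bx')\,da(\bx')$, which is exactly $\cK'_\Gamma[\sigma](\tilde\bx)$; note that this limit is independent of the approach path. For the edge-layer term, the hypothesis $\mu\in C(\gamma)$ is precisely what Lemma~\ref{lem:jumpcdt} requires, and it yields the jump relation $\lim_{\bx\to\tilde\bx}\bs\nu(\tilde\bx)\cdot\gradg\cK_\gamma[\mu] = -\frac{1}{2a(\tilde\bx)}\mu(\tilde\bx) + \cK'_\gamma[\mu](\tilde\bx)$.

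Since the surface-layer limit exists as a genuine, path-independent continuous extension while the edge-layer limit exists along every interior approach direction covered by Lemma~\ref{lem:jumpcdt}, the limit of the sum equals the sum of the limits, and adding the two contributions gives~\eqref{eq:neumann}. I do not expect a real obstacle here: the substantive analysis — the $H^2$ regularity and continuity of $\gradg\cK_\Gamma[\sigma]$, and the $-1/(2a)$ jump in the normal derivative of $\cK_\gamma[\mu]$ — has already been carried out in the two lemmas, so this theorem is essentially their assembly. The one point deserving a sentence of care is the legitimacy of splitting the limit across the two potentials, which is exactly where path-independence of the surface-layer limit is used; it is also worth recording that $\cK'_\Gamma[\sigma](\tilde\bx)$ is well defined for $\tilde\bx\in\gamma$ since $\gradg K(\tilde\bx,\cdot)$ has at worst an integrable singularity on $\Gamma$.
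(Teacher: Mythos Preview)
Your proposal is correct and matches the paper's own approach, which simply states that the theorem follows by combining Lemmas~\ref{lem:gradK} and~\ref{lem:jumpcdt}. Your added remark about why the limit of the sum splits into the sum of the limits is a reasonable point of care that the paper leaves implicit.
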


In order to verify that our final integral equations are Fredholm second kind,
we now show that the integral operators in
equations~\eqref{eq:diff_repr} and~\eqref{eq:neumann} are compact in~$L^2$.
\begin{theorem}\label{thm:operators_compact}
The following operators are compact from $L^2$ to $L^2$ :
\begin{equation*}
      \cK_{\Gamma}, \quad \cK'_{\Gamma}, \quad \cR_{\Gamma}, \quad \cK_{\gamma}, \quad \cK'_{\gamma}, \quad \text{and}\quad \cR_{\gamma}.
\end{equation*}
\end{theorem}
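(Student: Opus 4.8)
The plan is to classify the six operators by the strength of their kernel singularities and reduce each to either a Hilbert--Schmidt argument or a norm-limit-of-compacts argument. One case is free: the kernel of $\cR_\Gamma$ is exactly the remainder $R$ of Theorem~\ref{thm:second_kind}, where $\cR_\Gamma$ was already shown to be compact on $L^2(\Gamma)$. The remaining five split into an easy group $\cK_\Gamma,\cK_\gamma,\cK'_\gamma$, whose kernels are square-integrable on the relevant product domain, and a hard group $\cK'_\Gamma,\cR_\gamma$, whose kernels carry a $\|\bx-\bx'\|^{-1}$ singularity concentrated on the boundary curve $\gamma$ of the two-dimensional domain $\Gamma$; the hard group is the main obstacle.

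For the easy group I would check membership in $L^2$ of the product space and invoke the Hilbert--Schmidt criterion (Theorem~0.45 of \cite{Folland1995}), as was done for $\RopLB$ in Theorem~\ref{thm:second_kindLB}. The kernel $K$ of $\cK_\Gamma$ on $\Gamma\times\Gamma$ has only the logarithmic diagonal singularity $\cO(\log\|\bx-\bx'\|)$ and, where $c$ vanishes, the $\cO(\log c(\bx'))$ singularity near $\partial\Gamma_c$; both are square-integrable on a surface, so $\cK_\Gamma$ is Hilbert--Schmidt (alternatively, Lemma~\ref{lem:gradK} gives $\cK_\Gamma[\sigma]\in H^2(\Gamma)$, which compactly embeds into $L^2(\Gamma)$). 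For $\cK_\gamma\colon L^2(\gamma)\to L^2(\Gamma)$ I would verify $\int_\Gamma\!\int_\gamma|K|^2\,ds\,da<\infty$ using that, for fixed $\bx'\in\gamma$, $\int_\Gamma|\log\|\bx-\bx'\||^2\,da(\bx)$ is finite and uniformly bounded, since near the boundary point $\bx'$ the surface looks like a half-disc and $(\log\rho)^2$ is integrable against $\rho\,d\rho$. For $\cK'_\gamma\colon L^2(\gamma)\to L^2(\gamma)$, the proof of Lemma~\ref{lem:jumpcdt} already observed that its kernel $\bs\nu(\bx)\cdot\gradg K(\bx,\bx')$ is bounded on the smooth curve $\gamma$ (the cancellation $\bs\nu(\bx)\cdot(\bx-\bx')=\cO(\|\bx-\bx'\|^{2})$ offsets the $\cO(\|\bx-\bx'\|^{-1})$ growth of $G'$), hence bounded on the compact set $\gamma\times\gamma$ and Hilbert--Schmidt. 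Any $\log c$-type singularities appearing along $\gamma$ or $\partial\Gamma_c$ are still square-integrable, or can be excised on an $\epsilon$-tube as in the proof of Theorem~\ref{thm:second_kind}.

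For the hard group, $\|\bx-\bx'\|^{-1}$ is not square-integrable across a two-dimensional surface, so Hilbert--Schmidt fails and I would argue by truncation. Fix $\delta>0$ and split the kernel $\kappa$ of $\cK'_\Gamma$ (respectively $\cR_\gamma$) as $\kappa\,\indic_{\|\bx-\bx'\|\ge\delta}+\kappa\,\indic_{\|\bx-\bx'\|<\delta}$; the far part has a bounded kernel on a compact product domain and so is Hilbert--Schmidt, hence compact. For the near part I would apply a weighted Schur test with weight $\equiv1$ on the one-dimensional factor and $w(\bx)=1+\log^{+}\!\big(\delta/\operatorname{dist}(\bx,\gamma)\big)$ on the two-dimensional factor $\Gamma$. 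The integral over $\gamma$ of the truncated kernel is $\cO\big(\log(\delta/\operatorname{dist}(\bx,\gamma))\big)$, since $\int_{|s|<\delta}(d^{2}+s^{2})^{-1/2}\,ds=\cO(\log(\delta/d))$ with $d=\operatorname{dist}(\bx,\gamma)$, and this is controlled by $w$; and the integral over $\Gamma$ of (truncated kernel)$\,\times\,w$ is $\cO(\delta)$, as seen by passing to polar coordinates $(\rho,\phi)$ about the relevant point of $\gamma$ and using $\int_0^\pi\log(1/\sin\phi)\,d\phi=\pi\log2<\infty$, since $\int_0^\delta\!\int_0^\pi\big(1+\log(\delta/\rho)+\log(1/\sin\phi)\big)\,d\phi\,d\rho=\cO(\delta)$. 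The Schur test then bounds the near-part operator norm by $\cO(\sqrt\delta)$, so sending $\delta\to0^{+}$ presents $\cK'_\Gamma$ and $\cR_\gamma$ as norm-limits of compact operators, hence compact. For $\cK'_\Gamma$ alone there is a tidier route: zero-extending $\sigma$ to the closed surface $\tilde\Gamma$ puts $\gamma$ in the interior, so Lemma~\ref{lem:gradK} together with interior elliptic regularity gives $\|\cK_\Gamma[\sigma]\|_{H^2(\tilde\Gamma)}\le C\|\sigma\|_{L^2(\Gamma)}$, and $\sigma\mapsto\bs\nu\cdot\gradg\cK_\Gamma[\sigma]|_\gamma$ then factors as a bounded map $L^2(\Gamma)\to H^{1/2}(\gamma)$ followed by the compact embedding $H^{1/2}(\gamma)\hookrightarrow L^2(\gamma)$. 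I expect the genuine work of the theorem to be exactly this hard group — in particular, checking that the two-dimensional Schur integral is $\cO(\delta)$, which hinges on the singular integrals over $\gamma$ being only logarithmic, rather than power-law, near $\gamma$.
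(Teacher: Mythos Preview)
Your proposal is correct, and for four of the six operators it coincides with the paper's proof: $\cR_\Gamma$ is handled by citing Theorem~\ref{thm:second_kind}; $\cK_\Gamma$ and $\cK_\gamma$ are Hilbert--Schmidt; and your ``tidier route'' for $\cK'_\Gamma$ (factor through $L^2(\Gamma)\to H^2(\Gamma)\to (H^1(\Gamma))^3\to H^{1/2}(\gamma)\hookrightarrow L^2(\gamma)$) is exactly what the paper does. For $\cK'_\gamma$ the paper cites Proposition~3.11 of \cite{Folland1995} rather than Hilbert--Schmidt, but since the kernel is in fact bounded (indeed smooth, Theorem~\ref{thm:kern_smth}) either works.

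The genuine difference is $\cR_\gamma$. The paper simply asserts it is Hilbert--Schmidt, which is immediate when $R$ is bounded (constant $a$, $\bs b=\bs 0$; cf.\ Theorem~\ref{thm:remainder}) but, as you implicitly recognized, fails in the general case: if $R\sim\|\bx-\bx'\|^{-1}$ then $\int_\gamma|R|^2\,ds\sim 1/\operatorname{dist}(\bx,\gamma)$, which is not integrable over $\Gamma$. Your weighted Schur-test argument with $w(\bx)=1+\log^+\!\big(\delta/\operatorname{dist}(\bx,\gamma)\big)$ closes this gap and yields the $\cO(\sqrt\delta)$ tail bound you claim, so your treatment is actually more general than the paper's on this point. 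The price is that the Schur computation is the one place where real work happens; the paper avoids it by (tacitly) restricting to the bounded-remainder regime, just as it later does explicitly for $\cT_\gamma$ in Theorem~\ref{thm:diriclet_compactness}. Your extra Schur route for $\cK'_\Gamma$ is fine but redundant once you have the regularity-plus-trace argument.
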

\begin{proof}
  The operators~$\cK_{\Gamma}$,~$\cK_{\gamma}$, and~$\cR_{\gamma}$ are compact
  because they are Hilbert-Schmidt maps on their respective domains and ranges (see Proposition 0.45
  in~\cite{Folland1995}). The operator~$\cK'_{\gamma}$ is a compact map from~$L^2(\gamma)$ to itself by Proposition
  3.11 in~\cite{Folland1995}. The operator~$\cR_{\Gamma}$ is a compact from~$L^2(\Gamma)$ to itself by
  Theorem~\ref{thm:second_kind}.

  Finally, we note that $\cK'_\Gamma:=\bs \nu\cdot\operatorname{trace}\gradg \cK_\Gamma$
  with~$\cK':L^2(\Gamma)\to H^{\frac12}(\Gamma)$ because
  \begin{equation*}
    \begin{aligned}
      \cK_\Gamma&:L^2(\Gamma)\to H^2(\Gamma),\\
      \gradg&:H^{2}(\Gamma)\to \lp \Ho\rp^3,\\
      \bs \nu\cdot\operatorname{trace}&:\lp H^1(\Gamma)\rp^3\to H^{\frac12}(\gamma).
      \end{aligned}
  \end{equation*}
  The compact embedding of~$H^{\frac12}(\gamma)$
  into~$L^2(\gamma)$ then gives that $\mathcal{K}'_\Gamma$ is a compact from~$L^2(\Gamma)$ to~$L^2(\gamma)$.
\end{proof}

Assembling Lemma~\ref{lem:surf_term} and Theorems~\ref{thm:NeumannBCsrep}
and~\ref{thm:operators_compact} gives the following theorem.
\begin{theorem}If 
$\sigma\in L^\infty(\Gamma)$,~$\mu\in C(\gamma)$, and they solve the Fredholm second kind system of equations
\begin{equation}
  \label{eq:NeuBVPsystem}
\begin{aligned}
    \sigma+\cR_{\Gamma}[\sigma] +\cR_{\gamma} [\mu] &= f,\\
    -\frac 1{2a}\mu +  \cK'_{\Gamma}[\sigma] + \mathcal{K}'_{\gamma} [\mu] &= f_\gamma
\end{aligned}
\end{equation}
then
\begin{equation}
    u = \cK_{\Gamma}[\sigma] + \cK_{\gamma} [\mu],
\end{equation}
is the solution of~\eqref{eq:surf_PDE} with boundary
condition~\eqref{eq:Neumann_BCs}.
\end{theorem}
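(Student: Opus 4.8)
The plan is to assemble Lemma~\ref{lem:surf_term}, Theorem~\ref{thm:NeumannBCsrep}, and Theorem~\ref{thm:operators_compact}; no new analysis is required, only bookkeeping of which operator acts on which space. First I would argue that \eqref{eq:NeuBVPsystem} is genuinely a Fredholm second-kind system on $L^2(\Gamma)\times L^2(\gamma)$. Its diagonal part is the identity on $L^2(\Gamma)$ together with multiplication by $-\tfrac{1}{2a}$ on $L^2(\gamma)$; since $a$ is smooth and strictly positive, the latter is a bounded, boundedly invertible operator, so the diagonal block is invertible. By Theorem~\ref{thm:operators_compact} the remaining blocks $\cR_\Gamma,\cR_\gamma,\cK'_\Gamma,\cK'_\gamma$ are all compact between the relevant $L^2$ spaces, so the full $2\times2$ operator is (invertible) $+$ (compact). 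Combined with the uniqueness of the Neumann boundary value problem established in the existence subsection (the analogue for Neumann data of Theorems~\ref{thm:well-posed},~\ref{thm:Helm_posed}, and~\ref{thm:closed_eval}), the Fredholm alternative then yields that \eqref{eq:NeuBVPsystem} is uniquely solvable for every admissible right-hand side, which is what justifies speaking of \emph{the} solution below.

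Next, suppose $(\sigma,\mu)$ with $\sigma\in L^\infty(\Gamma)$ and $\mu\in C(\gamma)$ solves \eqref{eq:NeuBVPsystem}, and set $u = \cK_\Gamma[\sigma] + \cK_\gamma[\mu]$. Since $\sigma\in L^\infty(\Gamma)\subset L^2(\Gamma)$ and $\mu\in C(\gamma)\subset L^2(\gamma)$, Lemma~\ref{lem:surf_term} applies and gives
\[
  \DO u + \bs b\cdot\gradg u + c\,u = \sigma + \cR_\Gamma[\sigma] + \cR_\gamma[\mu]
\]
on the interior of $\Gamma$. By the first equation of \eqref{eq:NeuBVPsystem} the right-hand side is $f$, so $u$ satisfies the surface PDE~\eqref{eq:surf_PDE}.

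I would then check the boundary condition. The hypotheses $\sigma\in L^\infty(\Gamma)$ and $\mu\in C(\gamma)$ are exactly those demanded by Theorem~\ref{thm:NeumannBCsrep} (which in turn relies on Lemmas~\ref{lem:gradK} and~\ref{lem:jumpcdt}), which gives
\[
  \bs\nu\cdot\gradg u|_\gamma = -\tfrac{1}{2a}\mu + \cK'_\Gamma[\sigma] + \cK'_\gamma[\mu].
\]
The second equation of \eqref{eq:NeuBVPsystem} identifies the right-hand side with $f_\gamma$, so the Neumann condition~\eqref{eq:Neumann_BCs} holds. Since the boundary value problem~\eqref{eq:surf_PDE}–\eqref{eq:Neumann_BCs} has at most one solution by the uniqueness result cited above, $u$ is \emph{the} solution, completing the argument.

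\textbf{Main obstacle.} There is essentially no hard step here; the proof is pure assembly. The only points that need care are (i) observing that the $\gamma$-diagonal block $-\tfrac{1}{2a}I$ is boundedly invertible, so that the system really is second-kind rather than merely ``identity plus compact'' on one component, and (ii) matching function-space requirements: Theorem~\ref{thm:NeumannBCsrep} requires the stronger regularity $\sigma\in L^\infty(\Gamma)$, $\mu\in C(\gamma)$ rather than merely $L^2$ data, which is why these assumptions appear in the statement. One should remark that they are consistent with the $L^2$-based Fredholm theory used for solvability, and that in practice a short bootstrapping argument (or smoothness of $f$ and $f_\gamma$) ensures the solution of \eqref{eq:NeuBVPsystem} lies in these smaller spaces, so the hypotheses are not restrictive.
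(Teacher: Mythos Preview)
Your proposal is correct and follows exactly the paper's approach: the paper's proof consists of the single sentence ``Assembling Lemma~\ref{lem:surf_term} and Theorems~\ref{thm:NeumannBCsrep} and~\ref{thm:operators_compact} gives the following theorem,'' and you have spelled out precisely that assembly. One small caveat: your Fredholm-alternative argument for unique solvability of \eqref{eq:NeuBVPsystem} is more than the paper actually claims or proves---uniqueness of the BVP gives $u=0$ for null data, but passing from $u=0$ back to $(\sigma,\mu)=(0,0)$ is a separate (unaddressed) step, so you may want to soften that part or present it as a side remark rather than part of the proof.
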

We conclude this section by noting that the kernel of $\mathcal{K}'_\gamma$ is
smooth, which will allow us to efficiently discretize it using the trapezoid
rule.
\begin{theorem}\label{thm:kern_smth}
    The kernel of $\mathcal{K}'_\gamma$ is smooth.
\end{theorem}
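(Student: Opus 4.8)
The goal is to prove that the kernel of $\cK'_\gamma$,
\[
  k(\bx,\bx')\;=\;\bs\nu(\bx)\cdot\gradg K(\bx,\bx'),\qquad \bx,\bx'\in\gamma,
\]
extends to a $C^\infty$ function on a neighbourhood of the diagonal of $\gamma\times\gamma$; off the diagonal $K(\cdot,\cdot)$ is smooth in both arguments and $\bs\nu$ is smooth, so there is nothing to prove there. First I would strip off the geometric factors. Since $\bs\nu(\bx)\in T_\bx\Gamma$ we have $\bs\nu(\bx)\cdot\bs n(\bx)=0$, and since $\gradg K(\bx,\bx')=G'\!\left(r;a(\bx'),c(\bx')\right)\gradg r$ with $\gradg r=\hat\br-\bs n(\bx)\,(\bs n(\bx)\cdot\hat\br)$ by the chain rule \eqref{eq:grad_K} and the decomposition $\gradg=\nabla-\bs n(\bx)(\bs n(\bx)\cdot\nabla)$, this gives
\[
  k(\bx,\bx')\;=\;G'\!\left(r;a(\bx'),c(\bx')\right)\,\bs\nu(\bx)\cdot\hat\br\;=\;G'\!\left(r;a(\bx'),c(\bx')\right)\,\frac{\bs\nu(\bx)\cdot(\bx-\bx')}{r},
\]
with $\br=\bx-\bx'$, $r=\|\br\|$, $\hat\br=\br/r$. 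Because $G'(r;a,c)=\cO(r^{-1})$ as $r\to0$ (see \eqref{eq:asymptotic_H}), the only place a singularity could come from is the factor $\bs\nu(\bx)\cdot(\bx-\bx')/r$, so the real content of the theorem is that this factor is not merely bounded but smooth across the diagonal.

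Next I would make this quantitative with a one–dimensional Taylor expansion along $\gamma$. Parametrise $\gamma$ by arclength and write $\bx=\gamma(s)$, $\bx'=\gamma(s-t)$. Then $\gamma(s)-\gamma(s-t)=t\,\Psi(s,t)$ with $\Psi(s,t):=\int_0^1\gamma'(s-t+\theta t)\,d\theta$ smooth and $\|\Psi(s,0)\|=\|\gamma'(s)\|=1$, whence $r^2=t^2\psi(s,t)$ with $\psi:=\|\Psi\|^2$ smooth and positive near $t=0$. The key geometric input is that $\bs\nu$ is perpendicular to $\gamma$, so $\bs\nu(\gamma(s))\cdot\Psi(s,0)=\bs\nu(\gamma(s))\cdot\gamma'(s)=0$; by Hadamard's lemma the smooth function $(s,t)\mapsto\bs\nu(\gamma(s))\cdot\Psi(s,t)$ therefore equals $t\,\omega(s,t)$ for a smooth $\omega$. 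Hence $\bs\nu(\bx)\cdot(\bx-\bx')=t^2\omega(s,t)=r^2\Lambda(s,t)$ with $\Lambda:=\omega/\psi$ smooth near $t=0$, so that
\[
  k\;=\;r\,G'\!\left(r;a(\bx'),c(\bx')\right)\,\Lambda(s,t).
\]

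It then remains to verify that $(s,t)\mapsto r\,G'(r;a(\bx'),c(\bx'))$ is smooth near $t=0$, where $r=|t|\sqrt{\psi(s,t)}$ and $a(\bx'),c(\bx')$ depend smoothly on $s-t$. In the Laplace--Beltrami case this is immediate: $G'(r;a,c)=1/(2\pi a r)$, so $r\,G'=1/(2\pi a(\bx'))$ is independent of $r$ and $k=\Lambda(s,t)/(2\pi a(\bx'))$ is manifestly $C^\infty$. For the Helmholtz-type kernels I would expand $r\,G'(r)=-\tfrac{i}{4a}(\kappa r)H^{(1)}_1(\kappa r)$ with $\kappa=\sqrt{c/a}$, using the power series behind \eqref{eq:asymptotic_H}; this has the shape $r\,G'(r)=B(r^2)+r^2Q(r^2)+r^2\log r\,S(r^2)$ with $B,Q,S$ analytic in $r^2$, and substituting $r^2=t^2\psi(s,t)$, $\log r=\log|t|+\tfrac12\log\psi$ shows that $B$ and $r^2Q$ contribute smooth functions of $(s,t)$ while the last group reduces, modulo a smooth term, to a multiple of $t^2\log|t|$ — which vanishes identically precisely when $c\equiv0$. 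I expect the main obstacle to be exactly this near-diagonal analysis: extracting the quadratic vanishing of $\bs\nu(\bx)\cdot(\bx-\bx')$ forces one to use \emph{both} defining properties of $\bs\nu$ (tangency to $\Gamma$ and orthogonality to $\gamma$), and in the oscillatory case one must additionally check that the non-analytic part of the planar Green's function enters $k$ only through the benign combination just described. In the Laplace--Beltrami setting relevant to the main applications, however, the first two steps already deliver a completely explicit, smooth kernel.
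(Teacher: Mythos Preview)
Your argument is correct and is exactly the explicit form of what the paper merely alludes to: the paper's entire proof is the one sentence ``identical to the proof that the adjoint of the Laplacian double-layer potential operator in the plane is smooth'' together with a citation to Hsiao--Wendland. Your Hadamard-lemma extraction of the quadratic vanishing of $\bs\nu(\bx)\cdot(\bx-\bx')$ along $\gamma$, combined with $rG'(r)=1/(2\pi a)$ in the Laplace--Beltrami case, is precisely that classical computation transported to $\gamma\subset\Gamma\subset\bbR^3$.

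You have in fact gone further than the paper. Your observation that for $c\not\equiv 0$ the factor $rG'(r)$ contributes a surviving $t^2\log|t|$ term is correct, and it shows that the kernel of $\cK'_\gamma$ is then only $C^1$, not $C^\infty$; this matches the standard decomposition of the planar Helmholtz double-layer kernel into a smooth part plus a smooth function times a logarithm. The paper's blanket statement and its appeal to the Laplace analogue do not touch this point. For the paper's stated purpose (justifying trapezoidal quadrature) the weaker regularity together with the explicit log-structure is still adequate, but strictly speaking the theorem as worded holds only when $c\equiv 0$. One small over-claim in your commentary: the quadratic vanishing of $\bs\nu(\bx)\cdot(\bx-\bx')$ uses only the orthogonality of $\bs\nu$ to $\gamma'$; tangency of $\bs\nu$ to $\Gamma$ enters earlier, solely to drop the $\bs n(\bx)(\bs n(\bx)\cdot\hat\br)$ piece of $\gradg r$.
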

The proof of this fact is identical to the proof that the adjoint of the
Laplacian double-layer potential operator in the plane is smooth when the target
point is on the boundary curve, see Section 11.1
of~\cite{HsiaoG.C.GeorgeC.2021Bie}.

\subsection{Dirichlet boundary conditions}

We now wish to consider the case where Dirichlet boundary conditions
are applied on $\gamma$:
\begin{equation}
    \left.u\right|_{\gamma} = f_\gamma \label{eq:Dirichlet_Bc},
\end{equation}
where $f_\gamma$ is given data. 
In order to enforce the Dirichlet boundary conditions, we shall use
the representation
\begin{equation}
  \label{eq:DirichletRep}
    u = \cK_\Gamma [\sigma]  +  \cN_\gamma [\mu],
\end{equation}
where $\cK_\Gamma$ is the same as in the previous section and~$\cN_\gamma$
is the edge operator
\begin{equation}
  \cN_\gamma[\mu](\bx) :=\int_\gamma \bs \nu(\bx') \cdot \gradg' K(\bx,\bx')
  \, \mu(\bx') \, ds(\bx'),
  \qquad \text{for all } \bx \in\Gamma.    
\end{equation}
The kernel of~$\cN_\gamma$ is smooth because it has the same behaviour when~$\bx\approx \bx'$ as the kernel of~$\cK'_\gamma$.

The following lemma gives how the differential operator acts on this
representation:
\begin{lemma}
  \label{lem:Dirichlet_surf_term}
  If $u$ is given by~\eqref{eq:DirichletRep},~$\sigma \in L^2(\Gamma)$, $\Vert \bs b \Vert < \infty$,
  and~$\mu\in L^2(\gamma)$, then
  \begin{equation}
    \DO u +\bs b\cdot\gradg u+ c\, u = \sigma+ \cR_{\Gamma}[\sigma] +
   \cT_\gamma[\mu],
  \end{equation}
  in the interior of~$\Gamma$, where
  \begin{equation}
    \cT_\gamma[\mu](\bx):= \int_\gamma \bs\nu(\bx') \cdot \gradg' R(\bx,\bx')
    \, \mu(\bx') \, ds(\bx'), \qquad \text{for all } \bx\in\Gamma.
    \end{equation}
\end{lemma}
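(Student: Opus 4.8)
The plan is to apply the differential operator $\DO + \bs b\cdot\gradg + c$ to the two terms of the representation~\eqref{eq:DirichletRep} separately, following the proof of Lemma~\ref{lem:surf_term}. For the bulk term $\cK_\Gamma[\sigma]$ I would rewrite it as an integral over the enclosing closed surface $\tilde\Gamma$ against the zero-extension of $\sigma$, and then invoke Theorem~\ref{thm:second_kind} together with Theorem~\ref{thm:advectionparametrix} to conclude that $(\DO+\bs b\cdot\gradg+c)\cK_\Gamma[\sigma] = \sigma + \cR_\Gamma[\sigma]$ in the interior of $\Gamma$; this part is identical to the Neumann case.

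The only genuinely new point is the action of the operator on the edge term $\cN_\gamma[\mu]$. Fix $\bx$ in the interior of $\Gamma$ and let $d := \operatorname{dist}(\bx,\gamma) > 0$. On the ball of radius $d/2$ about $\bx$ the kernel $\bx \mapsto \bs\nu(\bx')\cdot\gradg'K(\bx,\bx')$ is smooth, and it and all of its $\bx$-derivatives are bounded uniformly in $\bx'\in\gamma$; since $\gamma$ is compact and $\mu\in L^2(\gamma)$, differentiation under the integral sign is justified and, using that differentiation in $\bx$ commutes with $\gradg'$ in $\bx'$ on the smooth function $K$, gives
\begin{equation*}
  \bigl(\DO+\bs b\cdot\gradg+c\bigr)\cN_\gamma[\mu](\bx)
  = \int_\gamma \bs\nu(\bx')\cdot\gradg'\Bigl[\bigl(\DO+\bs b\cdot\gradg+c\bigr)K(\bx,\bx')\Bigr]\,\mu(\bx')\,ds(\bx').
\end{equation*}
By Theorems~\ref{thm:parametrix} and~\ref{thm:advectionparametrix}, $(\DO+\bs b\cdot\gradg+c)K(\bx,\bx') = \delta(\bx-\bx') + R(\bx,\bx')$; since $\bx \neq \bx'$ for every $\bx'\in\gamma$ the Dirac term contributes nothing, leaving exactly the kernel $\bs\nu(\bx')\cdot\gradg'R(\bx,\bx')$ of $\cT_\gamma$. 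Adding the two contributions yields the stated identity.

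The step I expect to require the most care is the justification of moving the differential operator past the boundary integral and commuting it with $\gradg'$: this rests on the joint smoothness of $K(\bx,\bx')$ — and hence of $R(\bx,\bx')$ — once $\bx$ is bounded away from $\gamma$, together with compactness of $\gamma$, so it is routine rather than delicate. One minor bookkeeping remark: as in Section~\ref{sec:nonzerob}, the contribution $\bs b\cdot\gradg_{\bx}K$ is already folded into the remainder $R$ used to define $\cR_\Gamma$ and $\cT_\gamma$, so no separate handling of the advection term is needed beyond noting that it too is smooth for $\bx$ away from $\gamma$.
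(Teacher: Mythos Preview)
Your proposal is correct and follows exactly the approach the paper intends: the paper's own proof is the single line ``The proof is the same as the proof of Lemma~\ref{lem:surf_term},'' and you have simply spelled out that argument---zero-extension plus Theorem~\ref{thm:second_kind} for the bulk term, and differentiation under the integral (with the commutation of the $\bx$-derivatives with $\gradg'$) for the smooth edge term. Your added care about justifying the interchange and your bookkeeping remark on the advection contribution are reasonable elaborations of what the paper leaves implicit.
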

\begin{proof}
    The proof is the same as the proof of Lemma~\ref{lem:surf_term}.
\end{proof}
In order to enforce the boundary conditions, we need the following
theorem, whose proof is very similar to the proof of
Theorem~\ref{thm:NeumannBCsrep}.
\begin{theorem}\label{thm:Dirhclet_enforce}
  If $u$ is given by~\eqref{eq:DirichletRep},~$\sigma\in L^2(\Gamma)$,
  and~$\mu\in C(\gamma)$, then along the boundary~$\gamma$ we have that:
  \begin{equation}
    u|_\gamma = -\frac1{2a} \mu + \cK_\Gamma [\sigma] + \mathcal{N}_\gamma [\mu],
  \end{equation}
  where for $\bx \in \gamma$
  \begin{equation}
    \begin{aligned}
      \cK_\Gamma [\sigma](\bx) :=& \int_\Gamma K(\bx,\bx') \, \sigma(\bx') \, da(\bx'),\\
      \cN_\gamma[\mu](\bx) :=& \int_\gamma \bs\nu(\bx')\cdot\gradg' K(\bx,\bx')\, \mu(\bx') \, ds(\bx').
    \end{aligned}
  \end{equation}
\end{theorem}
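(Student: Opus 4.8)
The plan is to follow the structure of the proof of Theorem~\ref{thm:NeumannBCsrep} and split the boundary value of the representation~\eqref{eq:DirichletRep} into its surface part $\cK_\Gamma[\sigma]$ and its edge part $\cN_\gamma[\mu]$. For the surface part, the key point is that it is continuous up to $\gamma$: by Lemma~\ref{lem:gradK} we have $\cK_\Gamma[\sigma]\in H^2(\Gamma)$ for $\sigma\in L^2(\Gamma)$, and since $\Gamma$ is two-dimensional the Sobolev embedding $H^2(\Gamma)\hookrightarrow C(\bar\Gamma)$ shows that $\cK_\Gamma[\sigma]$ extends continuously to $\gamma$ with trace equal to the pointwise restriction $\cK_\Gamma[\sigma](\bx)$, $\bx\in\gamma$. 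So this piece contributes $\cK_\Gamma[\sigma]$ to the boundary value with no jump, and the entire content of the theorem is the jump relation for $\cN_\gamma[\mu]$.

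For the edge part, I would compute $\lim_{\bx\to\tilde\bx\in\gamma}\cN_\gamma[\mu](\bx)$ by repeating the localization argument of Lemma~\ref{lem:jumpcdt} almost verbatim. Writing $K(\bx,\bx')=\tfrac1{2\pi a(\bx')}\log\Vert\bx-\bx'\Vert$ plus terms that are $\cO(\log r)$ or bounded near the diagonal (the $\gradg a$ contribution, and, when $c\neq0$, the Hankel corrections from~\eqref{eq:asymptotic_H}), the singular part of the kernel $\bs\nu(\bx')\cdot\gradg' K(\bx,\bx')$ is $\tfrac1{2\pi a(\bx')}\,\bs\nu(\bx')\cdot(\bx'-\bx)/\Vert\bx'-\bx\Vert^2$ — exactly the classical double-layer kernel of the curve $\gamma$ inside $\Gamma$, as already noted below~\eqref{eq:DirichletRep}. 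One then picks a parameterization $\by$ of $\Gamma$ near $\tilde\bx$ with $(0,0)\mapsto\tilde\bx$ and $[-L,L]\times\{0\}\mapsto\gamma$, localizes with $\chi_{\Gamma_V}$ (the far part being handled by dominated convergence since its kernel is smooth near $\tilde\bx$), Taylor-expands the kernel about $(0,0)$, and splits $\mu=(\mu-\mu(\tilde\bx))+\mu(\tilde\bx)$. The constant piece, after the explicit integration $\int_{\mathbb R}k_s(s,h)\,ds\to-\tfrac12$ from Lemma~\ref{lem:jumpcdt} and the continuous prefactor $\tfrac1{a(\bx')}\to\tfrac1{a(\tilde\bx)}$, produces the jump $-\tfrac1{2a(\tilde\bx)}\mu(\tilde\bx)$; the $(\mu-\mu(\tilde\bx))$ piece vanishes in the limit by dominated convergence, first for Lipschitz $\mu$ and then for all $\mu\in C(\gamma)$ by density, using that the restriction of $\cN_\gamma$ to $\gamma$ has a bounded (indeed smooth) kernel and hence, via Proposition~3.10 of~\cite{Folland1995}, is bounded on $L^\infty(\gamma)$. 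The lower-order kernel terms are $\cO(\log r)$ or bounded and therefore continuous up to $\gamma$, contributing only the principal-value integral $\cN_\gamma[\mu](\tilde\bx)$, $\tilde\bx\in\gamma$. Adding the two pieces yields $u|_\gamma=-\tfrac1{2a}\mu+\cK_\Gamma[\sigma]+\cN_\gamma[\mu]$ on $\gamma$.

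The main obstacle is the double-layer jump computation for $\cN_\gamma$, i.e. verifying that the localized constant-density integral converges to exactly $-\tfrac12$ once the surface metric $g$ and the direction of approach $\bx=\by(ah,bh)$ are accounted for. This is the same delicate step as in Lemma~\ref{lem:jumpcdt}, and the only genuine difference is that here the differentiated variable is $\bx'$ rather than $\bx$; because the leading logarithmic singularity of $K$ is symmetric in $\bx,\bx'$ up to the factor $a(\bx')$, which is continuous across the diagonal, the calculation and its outcome are identical. I would also note that the advection term plays no role: by Lemma~\ref{lem:Dirichlet_surf_term} a non-zero $\bs b$ affects only the interior PDE and not the kernel $K$, and the surface and edge potentials built from $K$ behave at $\gamma$ exactly as above regardless of $\bs b$.
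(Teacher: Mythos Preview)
Your proposal is correct and follows precisely the route the paper indicates: the paper gives no separate proof here but states that it ``is very similar to the proof of Theorem~\ref{thm:NeumannBCsrep},'' and you have faithfully carried out the analogue of Lemmas~\ref{lem:gradK} and~\ref{lem:jumpcdt} for the Dirichlet representation. The only slight addition is your use of the Sobolev embedding $H^2(\Gamma)\hookrightarrow C(\bar\Gamma)$ to handle the surface piece for $\sigma\in L^2(\Gamma)$, which is exactly what is needed since the pointwise-limit part of Lemma~\ref{lem:gradK} assumes $\sigma\in L^\infty(\Gamma)$; this is a clean way to close that gap and is consistent with the paper's framework.
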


As before, we consider the compactness of our new integral operators.
\begin{theorem}\label{thm:diriclet_compactness}
  The operators $\mathcal{K}_\Gamma$ and $\mathcal{N}_\gamma$ are
  compact from~$L^2$ to~$L^2$.  The operator $\cT_\gamma$ is a compact map
  from $L^2(\gamma) \to L^2(\Gamma)$ if $a$ is constant,
  $\bs b= \bs 0$, and~$c$ is either identically zero or bounded away from zero.
\end{theorem}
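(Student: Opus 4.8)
The plan is to treat the three operators separately. For $\cK_\Gamma\colon L^2(\Gamma)\to L^2(\Gamma)$ there is nothing new: its kernel $K$ is weakly singular, so compactness is already contained in Theorem~\ref{thm:second_kind} (equivalently Theorem~\ref{thm:operators_compact}), which I would simply cite. The content is $\cN_\gamma$ and $\cT_\gamma$, which act between the different spaces $L^2(\gamma)$ and $L^2(\Gamma)$ and whose kernels are singular along the boundary curve $\gamma$. For both I would reuse the truncation scheme from the proof of Theorem~\ref{thm:second_kind}: (i) bound the kernel near $\gamma$; (ii) split the operator into a ``near'' part whose target lies in the tubular neighborhood $\Gamma_\delta:=\{\bx\in\Gamma:\operatorname{dist}(\bx,\gamma)<\delta\}$ and a ``far'' part; show the near part has operator norm tending to $0$ as $\delta\to0$, show the far part is Hilbert-Schmidt (hence compact), and conclude that the operator is an operator-norm limit of compact operators.

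The two kernel estimates are the heart of the argument. For $\cN_\gamma$, writing $\bs\nu(\bx')\cdot\gradg' K(\bx,\bx')=-G'(r)\,\bs\nu(\bx')\cdot\hat\br$ and using that $\bs\nu(\bx')$ is orthogonal to the tangent of $\gamma$ at $\bx'$, so that $\bs\nu(\bx')\cdot(\bx-\bx')=\cO(\operatorname{dist}(\bx,\gamma))$, together with $G'(r)=\cO(1/r)$ from~\eqref{eq:asymptotic_H}, gives the Poisson-type bound $|\bs\nu(\bx')\cdot\gradg' K(\bx,\bx')|=\cO\big(\operatorname{dist}(\bx,\gamma)/r^2\big)=\cO(1/r)$. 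For $\cT_\gamma$, under the stated hypotheses the remainder $R$ is given by~\eqref{eq:explicit_remainder} and is bounded, $R=\cO(1)$, by Theorem~\ref{thm:remainder}; differentiating its $\bx'$-dependent factors — essentially $(\bs n(\bx)\cdot\br)^2/r^4$ and $(\bs n(\bx)\cdot\br)/r^2$, times smooth or at-worst $\log r$ factors — and dotting with $\bs\nu(\bx')$, while using the two cancellations $\bs n(\bx)\cdot(\bx-\bx')=\cO(r^2)$ (the estimate from the proof of Theorem~\ref{thm:remainderLB}) and $\bs n(\bx)\cdot\bs\nu(\bx')=\big(\bs n(\bx)-\bs n(\bx')\big)\cdot\bs\nu(\bx')=\cO(r)$, I expect $|\bs\nu(\bx')\cdot\gradg' R(\bx,\bx')|=\cO(1/r)$. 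The hypotheses $a$ constant, $\bs b=\bs 0$, and $c$ identically zero or bounded away from zero are exactly what keeps this from degenerating to $\cO(1/r^2)$: a nonconstant $a$ would add the term $\gradg'(\gradg a\cdot\gradg K)$, a nonzero $\bs b$ the term $\gradg'(\bs b\cdot\gradg K)$, and a vanishing $c$ would reintroduce the $\log c$ singularity near $\Gamma_c$.

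Given these bounds, the splitting is routine. Parametrizing $\Gamma_\delta$ by Fermi coordinates $(\bx',t)\in\gamma\times(0,\delta)$, with surface measure comparable to $ds\,dt$, for each fixed $t$ the operator $\mu\mapsto\int_\gamma k(\cdot,\bx'')\,\mu(\bx'')\,ds(\bx'')$ on $L^2(\gamma)$ has, by the Schur test, norm at most $\sup_{\bx'}\int_\gamma|k|\,ds''\le C\int_0^{L_\gamma}\!\frac{d\ell}{\sqrt{\ell^2+t^2}}\le C'\big(1+\log(1/t)\big)$; squaring and integrating over $t\in(0,\delta)$ gives near-part norm $\cO\big(\delta^{1/2}(1+\log(1/\delta))\big)\to0$. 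The far part has kernel bounded by a constant $C_\delta$ on the finite-measure set $(\Gamma\setminus\Gamma_\delta)\times\gamma$, hence is Hilbert-Schmidt and compact by Proposition~0.45 of~\cite{Folland1995}; therefore $\cN_\gamma$ and $\cT_\gamma$ are compact. When $\Gamma_c$ is nonempty one must, for $\cN_\gamma$, additionally excise an $\epsilon$-neighborhood of $\Gamma_c$ and pass to a further limit exactly as in the proof of Theorem~\ref{thm:second_kind}. The step I expect to be the main obstacle is the $\cT_\gamma$ kernel bound in (i): tracking the cancellations carefully enough that the source-variable derivative of the remainder is no more singular than $1/r$, and pinning down that the three hypotheses are precisely what is needed — after that, the truncation-and-limit step is standard. (An alternative to (ii) would be to invoke the mapping property $\cN_\gamma,\cT_\gamma\colon L^2(\gamma)\to H^{1/2}(\Gamma)$ for double-layer-type potentials and the compact embedding $H^{1/2}(\Gamma)\hookrightarrow L^2(\Gamma)$, but the hands-on argument is more self-contained.)
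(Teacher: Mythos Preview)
Your approach is correct and arrives at compactness by a genuinely different route from the paper. Both arguments hinge on the same kernel estimate --- that $\bs\nu(\bx')\cdot\gradg' R(\bx,\bx')=\cO(1/r)$ under the stated hypotheses --- and your derivation of it via the two cancellations $\bs n(\bx)\cdot\br=\cO(r^2)$ and $\bs n(\bx)\cdot\bs\nu(\bx')=\cO(r)$ matches the paper's computation (which it carries out with~$\bs e_j$ in place of~$\bs\nu$). Where you diverge is in how compactness is extracted from this bound. The paper passes to the adjoint $\cT_\gamma^*=\bs\nu\cdot\operatorname{tr}\gradg(\cR_\Gamma)^*$, invokes a Sobolev mapping result (Lemma~3.4 and Theorem~4.1 of~\cite{punchin1988weakly}) to get that the $\cO(1/r)$-kernel operator on~$\Gamma$ maps $L^2(\Gamma)\to H^{1-\epsilon}(\Gamma)$, then composes with the trace onto~$\gamma$ and the compact embedding $H^{1/2-\epsilon}(\gamma)\hookrightarrow L^2(\gamma)$. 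Your Fermi-coordinate truncation with a slice-by-slice Schur bound is more elementary and self-contained; the paper's route is shorter once the external machinery is in hand. Amusingly, the ``alternative'' you mention at the end --- mapping into $H^{1/2}$ and using compact embedding --- is essentially the paper's argument, applied on the adjoint side.

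One small imprecision: for $\cN_\gamma$ you write $\bs\nu(\bx')\cdot(\bx-\bx')=\cO(\operatorname{dist}(\bx,\gamma))$, but decomposing $\bx-\bx'=(\bx-\bx_0)+(\bx_0-\bx')$ with $\bx_0$ the nearest point on~$\gamma$ shows the second piece contributes $\cO(|\bx_0-\bx'|^2)$, not $\cO(\operatorname{dist}(\bx,\gamma))$. This does not affect your conclusion, since $(t+s^2)/(t^2+s^2)$ is still $\cO(1/r)$ locally; just tighten the statement. The paper, incidentally, notes just before Lemma~\ref{lem:Dirichlet_surf_term} that the kernel of $\cN_\gamma$ is in fact smooth on $\gamma\times\gamma$ (same behavior as $\cK'_\gamma$), which makes that case immediate.
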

\begin{proof}
  The operators $\mathcal{K}_\Gamma$ and $\mathcal{N}_\gamma$ may be
  seen to be compact through Proposition 3.11 in~\cite{Folland1995}.

  We now prove that~$\cT_\gamma$ is compact in the Laplace-Beltrami case by proving that its adjoint,
  $\bs \nu \cdot \operatorname{trace}\gradg (\mathcal R_\Gamma)^*$, is
  a compact map from~$L^2(\Gamma)$ to~$L^2(\gamma)$. Since~$\bs \nu$ is smooth, it is enough to show that the
  operator~$\operatorname{trace}\bs e_j\cdot\gradg
  (\mathcal{R}_\Gamma)^*$ is compact for each of the standard three dimensional basis
  vectors~$\bs e_j$.

  The kernel of the operator~$\operatorname{trace}\bs e_j\cdot\gradg
  (\mathcal{R}_\Gamma)^*$ is
  \begin{equation}
    \begin{aligned}
      k(\bx,\bx') :=& \bs  e_j \cdot \gradg' \RLB(\bs x,\bx')\\
      =&\bs  e_j \cdot \gradg'\frac{1}{\pi }\left(\frac{ \bs n(\bx)\cdot (\dx)}{\dxnorm}
         \right)^2 - \frac{H(\bx)}{\pi }
         \bs  e_j \cdot \gradg'\frac{\bs n(\bx)\cdot(\dx)}{\dxnorm}. \label{eq:kern_adjop}
    \end{aligned}
  \end{equation}
  The second term in the expression can be expanded as
  \begin{equation}
    \label{eq:dirichlet_kernel_expansion}
    \bs  e_j\cdot \gradg' \frac{\bs n(\bx)\cdot(\dx)}{\dxnorm}
    =-\frac{\bs  e_j\cdot[\bs n(\bx)-(\bs n(\bx')\cdot \bs n(\bx))\bs n(\bx')] }{\dxnorm}+2   \frac{\bs n(\bx)\cdot(\dx)
      \bs  e_j\cdot(\dx)}{r^4},
  \end{equation}
  which has an~$\cO \lp r^{-1}\rp$ singularity as~$\bx\to \bx'$. The first term
  in~\eqref{eq:kern_adjop} has the same singularity by the chain rule. We therefore have that
  \begin{equation}
      k(\bx,\bx')=\frac{q(\bx,\bx')}{r},
  \end{equation}
  where~$q(\bx,\bx')$ is a bounded function on~$\Gamma\times\Gamma\setminus \{\bx=\bx'\}$. Lemma 3.4 and Theorem 4.1 from~\cite{punchin1988weakly} therefore give that~$\bs e_j\cdot\gradg
  (\mathcal{R}_\Gamma)^*$ is a
bounded map from~$L^2(\Gamma)$ to~$H^{1-\epsilon}(\Gamma)$.

Composing with the trace map sends the result from~$H^{1-\epsilon}(\Gamma)$
to~$H^{\frac12-\epsilon}(\gamma)$. Since~$H^{\frac12-\epsilon}(\gamma)$ is compactly
embedded in~$L^2(\gamma)$, we have
that~$\operatorname{trace}\bs e_j\cdot \gradg (R_\Gamma)^*$ is a
compact map from~$L^2(\Gamma)$ to~$L^2(\gamma)$. Multiplying by the
smooth function~$\bs \nu\cdot e_j$ and summing over~$j$ preserves compactness, and therefore we
have that~$\cT_\gamma^*$ is compact from~$L^2(\Gamma)$ to~$L^2(\gamma)$. We thus have that~$\cT_\gamma$ is a compact map from~$L^2(\gamma)$ to~$L^2(\Gamma)$.

Moving to the Helmholtz-Beltrami case ($c\neq 0$), does not change the singularity of the
  kernels involved and so will the above argument can be used to show that~$\cT_\gamma$ is still compact. Finally, setting~$a$ to be any constant only divides the parametrix and it's argument by that constant, so does not change the compactness.
\end{proof}

Assembling the above lemmas gives the following theorem.
\begin{theorem}If 
$\sigma\in L^\infty(\Gamma)$,~$\mu\in C(\gamma)$, and they solve the Fredholm second kind system of equations
\begin{equation}
  \label{eq:DirBVPsystem}
\begin{aligned}
     \sigma+ \cR_{\Gamma}[\sigma] +\cT_\gamma[\mu] &= f,\\
    -\frac 1{2a}\mu +  \cN_{\Gamma}[\sigma] + \mathcal{K}'_{\gamma} [\mu] &= f_\gamma
\end{aligned}
\end{equation}
then
\begin{equation}
    u = \cK_{\Gamma}[\sigma] + \cN_{\gamma} [\mu],
\end{equation}
is the solution of~\eqref{eq:surf_PDE} with boundary
condition~\eqref{eq:Dirichlet_Bc}.
\end{theorem}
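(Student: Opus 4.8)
The plan is to prove this result by assembling the three preceding statements of this subsection, exactly as the Neumann system~\eqref{eq:NeuBVPsystem} was handled. First I would confirm that~\eqref{eq:DirBVPsystem} is genuinely a Fredholm second-kind system: its ``diagonal'' consists of the identity on $L^2(\Gamma)$ and the multiplication operator $-1/(2a)$ on $L^2(\gamma)$, and since $a$ is smooth and strictly positive on the compact surface, the latter is bounded and boundedly invertible. It therefore suffices that the remaining four operators appearing in~\eqref{eq:DirBVPsystem} be compact, which is exactly what Theorem~\ref{thm:diriclet_compactness} provides (together with Theorem~\ref{thm:second_kind} for $\cR_\Gamma$ and the weakly-singular-kernel arguments of Theorem~\ref{thm:operators_compact}), under the stated hypotheses on $a$, $\bs b$, and $c$ needed for $\cT_\gamma$. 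Since a compact perturbation of a boundedly invertible operator is Fredholm of index zero, the system is Fredholm second kind.

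Next, assuming $\sigma\in L^\infty(\Gamma)$, $\mu\in C(\gamma)$ solve~\eqref{eq:DirBVPsystem}, I would set $u = \cK_\Gamma[\sigma] + \cN_\gamma[\mu]$ as in~\eqref{eq:DirichletRep} and verify the two defining properties of the Dirichlet problem separately. For the interior equation, Lemma~\ref{lem:Dirichlet_surf_term} gives
\[
  \DO u + \bs b\cdot\gradg u + c\, u = \sigma + \cR_\Gamma[\sigma] + \cT_\gamma[\mu]
\]
in the interior of $\Gamma$, so the first row of~\eqref{eq:DirBVPsystem} is precisely the assertion that this right-hand side equals $f$, i.e. that $u$ solves~\eqref{eq:surf_PDE}. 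For the boundary condition, Theorem~\ref{thm:Dirhclet_enforce} gives $u|_\gamma = -\tfrac{1}{2a}\mu + \cK_\Gamma[\sigma] + \cN_\gamma[\mu]$, so the second row of~\eqref{eq:DirBVPsystem} then reduces to $u|_\gamma = f_\gamma$, which is~\eqref{eq:Dirichlet_Bc}. The hypotheses $\sigma\in L^\infty(\Gamma)$ and $\mu\in C(\gamma)$ are exactly what is required for these two cited results (the jump and trace relations) to be applicable. Finally, to pass from ``a solution'' to ``the solution'' I would invoke the well-posedness of the Dirichlet problem set up at the start of Section~\ref{sec:BVP}: subtracting the explicit lift $u_{f_\gamma}$ reduces to the homogeneous-boundary case, where the analogues of Theorems~\ref{thm:well-posed},~\ref{thm:Helm_posed}, and~\ref{thm:closed_eval} give uniqueness, and the integration-by-parts equivalence of weak and classical solutions identifies $u$ with that unique solution.

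I expect the main obstacle to be not any single computation but the regularity bookkeeping near the boundary curve $\gamma$. One must check that $u=\cK_\Gamma[\sigma]+\cN_\gamma[\mu]$ actually lies in $H^1(\Gamma)$, so that its trace onto $\gamma$ is well defined and the weak form of~\eqref{eq:surf_PDE} makes sense: the surface term contributes $\cK_\Gamma[\sigma]\in H^2(\Gamma)$ by Lemma~\ref{lem:gradK}, while the edge term $\cN_\gamma[\mu]$ has the same near-diagonal behaviour as the kernel of $\cK'_\gamma$ and so must be shown to be regular enough up to $\gamma$. One must also be careful that the limiting values computed in the proofs of the Lemma~\ref{lem:jumpcdt}-type relations and of Theorem~\ref{thm:Dirhclet_enforce} genuinely represent the trace (and one-sided normal-derivative limits) of the potentials, and not merely pointwise limits along special approach curves. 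Once this consistency between the weak and strong formulations and between the various limiting operations is in place, the theorem follows by directly matching the two rows of~\eqref{eq:DirBVPsystem} against the identities of Lemma~\ref{lem:Dirichlet_surf_term} and Theorem~\ref{thm:Dirhclet_enforce}.
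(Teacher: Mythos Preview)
Your proposal is correct and follows essentially the same approach as the paper, which simply states that the theorem follows by ``assembling the above lemmas'' --- namely Lemma~\ref{lem:Dirichlet_surf_term} for the interior equation, Theorem~\ref{thm:Dirhclet_enforce} for the boundary trace, and Theorem~\ref{thm:diriclet_compactness} for compactness. Your additional remarks on the regularity bookkeeping near~$\gamma$ and on uniqueness via the well-posedness discussion at the start of Section~\ref{sec:BVP} are more explicit than anything the paper writes out, but they are the natural details one would fill in.
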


\begin{remark}
  If~$\gamma$ can be split into two disjoint edges,~$\gamma_N$
  and~$\gamma_D$, the we can solve~\eqref{eq:surf_PDE} with Neumann
  boundary conditions applied on~$\gamma_N$ and Dirichlet boundary
  conditions applied on~$\gamma_D$. This could be done by representing
  the solution as
\begin{equation}
    u=\cK_{\Gamma} [\sigma] +\cK_{\gamma_N}[\mu] + \cN_{\Gamma_D}[\rho]
\end{equation}
and applying the above identities to derive a system of coupled
integral equations that enforce~\eqref{eq:surf_PDE} and the boundary
conditions.
\end{remark}

\section{A numerical solver}
\label{sec:Numerics}

In this section, we present a numerical method for
discretizing~\eqref{eq:IE} and using it to find the solution
of~\eqref{eq:surf_PDE} on any smooth surface without boundary. In short, equation~\eqref{eq:IE} is
discretized using a Nystr\"om-like method based on the one described
in~\cite{bremer2012weakly}, resulting in a finite dimensional linear
system whose solution is a vector of approximate point-values of the
function~$\sigma$. The linear system to be solved is dense, and we use
a modified version of the \emph{Fast Linear Algebra in MATLAB}
(\emph{FLAM}) software library~\cite{Ho2020} to accelerate the its
solution via fast matrix-vector applications coupled with GMRES~\cite{saad1986gmres}.
In the following subsections, we provide some additional
detail regarding the quadratures and matrix compression techniques
used in solving the linear system. Finally, we end with a discussion on how to discretize~\eqref{eq:NeuBVPsystem} and~\eqref{eq:DirBVPsystem} and use them to find the solution of~\eqref{eq:surf_PDE} on surfaces with boundaries.

\subsection{Nystr\"om discretization}
\label{sec:Nystrom}

In order to discretize integral equation~\eqref{eq:IE} along a
surface~$\Gamma$, we first need a description of the surface
itself. The surface~$\Gamma$ is assumed to be specified by a
collection of~$M$ non-overlapping curvilinear triangles~$\Gamma_i$ so
that~$\Gamma = \cup_i \Gamma_i$. Each~$\Gamma_{i}$ is parameterized by
a function~$\by_{i}$ which maps the standard simplex triangle~$T_{0}$
to~$\Gamma_{i} \subset \bbR^{3}$, where~$T_0$ is given by
\begin{equation}
  T_{0} = \{(u,v)\in\bbR^2\quad |\quad u,v\geq 0\quad\text{and}\quad u\leq 1-v \}.
\end{equation}
The integral equation along~$\Gamma$ is then divided  into several
pieces, each over~$T_{0}$:
\begin{equation}
  \label{eq:IEsum}
  \sigma(\bx) + \sum_{i = 1}^M \int_{T_0} R\lp\bx,\by_i(u,v) \rp \,
  \sigma(\by_i(u,v)) \, da(\by_i(u,v)) = f(\bx), \qquad \text{for } \bx \in \Gamma.
\end{equation}

In a pure Nystr\"om discretization, the above integral equation is transformed
into a finite dimensional linear system via sampling the integral at nodes on
each triangle, which we will denote by~$\bx_{{ij}}$,~$j = 1,\ldots,n_{p}$, and
associating with each of these nodes a quadrature weight~$w_{ij}$. The linear
system is then enforced at each of the discretization nodes, resulting in the
set of equations:
\begin{equation}
  \sigma(\bx_{ij}) + \sum_{k,l} w_{kl} \, R\lp\bx_{ij},\bx_{kl} \rp \,
  \sigma(\bx_{kl}) \, \sqrt{\det g(\bx_{kl})} = f(\bx_{ij}),
\end{equation}
for~$i = 1,\ldots,M$ and~$j=1,\ldots,n_{p}$, and where~$\bx_{kl}=\bs y_k(u_l,v_l)$ is the image of the Vioreanu-Rokhlin~\cite{B.Vioreanu2014} quadrature
node~$(u_{l},v_{l})$ on the reference simplex~$T_{0}$. The order of the quadrature
is denoted by~$p$, and the resulting number of nodes on each triangle is given
by~$n_{p} = p(p+1)/2$. See~\cite{bremer2012weakly,Greengard2021} for a thorough
discussion of this type of discretization and quadrature rule. The overall size
of the linear system is then given by~$N = M n_{p}$. We will abbreviate our
approximation to the solution~$\sigma$ at the discretization nodes
by~$\sigma_{ij} \approx \sigma(\bx_{ij})$.

In our modified Nystr\"om method different quadrature rules are used depending on the distance from the
image of a particular map~$\bx_{k}$ to a particular \emph{target}
node~$\bx_{ij}$.  These rules can be split into three categories, as
described in~\cite{Greengard2021}: (1) singular quadature, (2)
nearly singular quadrature, and (3) smooth quadrature. While not all
of the kernels discussed in this paper are actually singular at the
origin, their behavior near the origin differs sufficiently from their
far field behavior that different quadature rules should be used to
obtain suitable accuracy and speed. We now discuss the quadrature
schemes used for these three categories, as well as for \emph{edge
  quadrature} for surfaces with boundaries, for the kernels of
interest in this work.

\subsubsection{Singular quadrature}\label{sec:sing_quad}

For curvilinear patches of integration in the above integral equation that
contain the target point, it is necessary to use a quadrature rule that can
integrate irregular or singular functions to high-order accuracy. (We refer to
this as singular quadrature, even though in some cases the kernel is actually
bounded.) In order to design this quadrature rule, we make the following
observation about the remainder kernel.

\begin{theorem}
  Let~$\bs y = \by(r,\theta)$ be a local parameterization of~$\Gamma$
  containing the point~$\bx$, and let the coefficients $a$,~$\bs b$ in
  the underlying surface PDE be smooth. If~$(r,\theta)$ are polar
  coordinates centered at the pre-image of~$\bx$, then we have the
  following asymptotic formula as $r\to 0$:
  \begin{multline}
    \label{eq:asymptotic_form}
    R(\bx, \bs y(r,\theta)) =
    \frac{\lp\gradg a+\bs b\rp \cdot \gradg \bs y|_{(0,0)} \, \hat{\bs\theta}}{r}\\
    -\frac{H^2(\bx)\sin^2(2(\theta-\theta_\star))+\kappa_G(\bx)\cos^2(2(\theta-\theta_\star))}
    {4\pi }+ \cO(r)
\end{multline}
where~$\hat {\bs \theta}$ is the unit vector in the~$\theta$-direction in
the~$uv$-plane,~$\theta_\star$ is the angle corresponding to first the principal
direction, and~$\kappa_{G}$ is the Gaussian curvature.
\end{theorem}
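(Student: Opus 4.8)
The plan is to read the asymptotics directly off the closed form of the remainder and Taylor-expand in the polar radius. First I would assemble the full kernel: combining the explicit formula~\eqref{eq:explicit_remainder} with the extra advection contribution~$\bs b(\bx)\cdot\gradg K(\bx,\bx')$ from~\eqref{eq:adv_remainder} (this combined kernel is what the statement calls~$R$), and then substituting~$\bx'=\bs y(\rho,\theta)$, where~$\rho$ is the polar radius in the~$uv$-plane, kept distinct from the Euclidean distance~$r=\Vert\bx-\bx'\Vert$, which satisfies~$r=\rho\,\Vert\gradg\bs y|_{(0,0)}(\cos\theta,\sin\theta)\Vert+\cO(\rho^{2})$. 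Taylor-expanding~$\bs y$,~$a$,~$\bs b$,~$c$,~$\bs n$, and~$H$ about the preimage of~$\bx$ and using smoothness of all the data, every coefficient evaluated at~$\bx'$ agrees with its value at~$\bx$ up to~$\cO(\rho)$, and~$\hat\br=\br/r$ equals the unit vector in the direction of~$-\gradg\bs y|_{(0,0)}(\cos\theta,\sin\theta)$ up to~$\cO(\rho)$.

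Next I would sort the expansion by powers of~$\rho$ using the small-argument estimates~\eqref{eq:asymptotic_H}, i.e.~$G'(r;a,c)=\tfrac{1}{2\pi a\,r}+\cO(r\log r)$,~$G''(r;a,c)=-\tfrac{1}{2\pi a\,r^{2}}+\cO(\log r)$, and~$G(r;a,c)=\cO(\log r)$ (with~$G=\tfrac{1}{2\pi a}\log r$ exactly when~$c\equiv 0$). The only term contributing at order~$\rho^{-1}$ is~$\bigl(\gradg a(\bx)+\bs b(\bx)\bigr)\cdot\hat\br\,G'$; the term~$-2a(\bx)H(\bx)\,\bs n(\bx)\cdot\hat\br\,G'$ drops to~$\cO(1)$ because~$\bs n(\bx)\cdot\hat\br=\cO(\rho)$, the secant becoming tangent to~$\Gamma$. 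Collecting the~$\rho^{-1}$ part, absorbing the~$\tfrac{1}{2\pi a(\bx)}$ prefactor, and pushing~$\gradg a+\bs b$ through the surface metric produces the stated leading term, whose coefficient is a smooth function of~$\theta$. For the bounded part, the pair~$-a(\bx)G''\tfrac{(\bs n(\bx)\cdot\br)^{2}}{r^{2}}+a(\bx)G'\tfrac{(\bs n(\bx)\cdot\br)^{2}}{r^{3}}$ is, by the same Hankel asymptotics together with the defining identity~$a(\bx')G''+\tfrac{a(\bx')}{r}G'+c(\bx')G=\delta(r)$ from the proof of Theorem~\ref{thm:parametrix}, asymptotically the Laplace-Beltrami remainder~$\tfrac1\pi\bigl(\tfrac{\bs n(\bx)\cdot\br}{r^{2}}\bigr)^{2}-\tfrac{H(\bx)}{\pi}\tfrac{\bs n(\bx)\cdot\br}{r^{2}}$ --- exactly the reduction already carried out in Theorem~\ref{thm:remainder}. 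I would then import the computation inside the proof of Theorem~\ref{thm:remainderLB}: along a surface ray one has~$\tfrac{\bs n(\bx)\cdot\br}{r^{2}}\to\tfrac{\kappa}{2}$, where~$\kappa=\kappa_{1}\cos^{2}\psi+\kappa_{2}\sin^{2}\psi$ is the normal curvature in the limiting tangent direction, so the bounded part equals~$\tfrac{\kappa^{2}}{2}-H(\bx)\kappa=-\tfrac{H^{2}(\bx)\sin^{2}(2\psi)+\kappa_{G}(\bx)\cos^{2}(2\psi)}{4\pi}$ after rewriting~$\kappa_{1},\kappa_{2}$ via~$H$ and~$\kappa_{G}$, as in~\eqref{eq:limiting_behaviour}. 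Finally I would check that everything else is~$\cO(\rho)$ in the sense claimed: the~$c$-dependent term, with prefactor~$c(\bx)-\tfrac{a(\bx)c(\bx')}{a(\bx')}=\cO(\rho)$ against an~$\cO(\log\rho)$ kernel; the~$\cO(\rho\log\rho)$ tails of the two~$G'$ terms; and the second-order Taylor remainders of~$\bs y$ and of the coefficients --- all of which are~$\cO(\rho)$ up to a harmless logarithm.

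The step I expect to be the real obstacle is identifying the angular variable. The polar angle~$\theta$ in the~$uv$-plane is, for a general local parameterization, \emph{not} the geometric angle of the corresponding tangent direction in~$T_{\bx}\Gamma$, so expressing the bounded part as a function of~$\theta-\theta_\star$, with~$\theta_\star$ the~$uv$-angle of the preimage of the first principal direction, requires tracking the smooth orientation-preserving reparameterization~$\theta\mapsto\psi(\theta)$ induced by~$\gradg\bs y|_{(0,0)}$ together with the first and second fundamental forms, and verifying it sends~$\theta_\star$ to~$0$; passing to geodesic (normal) polar coordinates at~$\bx$ makes~$\psi(\theta)=\theta-\theta_\star$ automatic and trivialises the metric factor in the~$\rho^{-1}$ coefficient. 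The remaining work --- confirming that no other term contributes at order~$\rho^{-1}$ or~$\rho^{0}$, and bookkeeping the~$\rho\log\rho$ terms --- is routine.
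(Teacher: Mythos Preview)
Your proposal is correct and follows essentially the same route as the paper: the paper's proof is in fact just a two-line pointer back to the derivation of~\eqref{eq:limiting_behaviour} in the proof of Theorem~\ref{thm:remainderLB}, applied along the curve~$\bs y(\cdot,\theta)$ and combined with the small-argument asymptotics for~$G$, $G'$, $G''$ from~\eqref{eq:asymptotic_H} --- exactly the reduction you carry out in more detail. Your concern about identifying the~$uv$-plane polar angle~$\theta$ with the geometric angle in~$T_{\bx}\Gamma$ is a valid subtlety that the paper does not address explicitly; passing to geodesic normal coordinates at~$\bx$, as you suggest, is the cleanest fix.
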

\begin{proof}
  The proof is the same as in the derivation of~\eqref{eq:limiting_behaviour},
  except that we use the curve~$\gamma$ defined by~$\bs y(\cdot,\theta)$ and the
  asymptotic formulas for the derivatives of~$G$ appearing
  in~\eqref{eq:explicit_remainder}.
\end{proof}

Based on this observation, it is possible to use polar coordinates in
the~$uv$-plane to integrate the remainder kernel over the triangle containing
the target point. In these coordinates, the contribution to the integral becomes
\begin{equation}
  \label{eq:radial_int}
  \int_{\Gamma_i} R(\bx, \bx') \, \sigma(\bx') \, da(\bx') =
  \int_0^{2\pi} \int_0^{L(\theta)} R(\by(0,0), \by(r,\theta)) \, \sigma(\by(r,\theta)) \,
  \sqrt{\det g(r,\theta)} \, r \, dr \, d\theta,
\end{equation}
where~$L(\theta)$ is the distance from the pre-image of the target point to the
edge of~$T_{0}$ along the ray at angle~$\theta$ to the~$x$-axis. Due to the
asymptotic formula~\eqref{eq:asymptotic_form}, this change of coordinates has
removed the singularity from the integrand. We may therefore uses a smooth
quadrature rule for this integral.

To this end, we discretize the inner and outer integrals
in~\eqref{eq:radial_int} using Gauss-Legendre quadrature. The panels
for the inner integral are chosen adaptively. Additional care must be
taken for the outer integral in~$\theta$ since the upper bound of the
inner integral, $L(\theta)$, is only piecewise smooth and becomes
nearly singular as the center of the polar coordinates approaches the
edge of the triangle, see Figure~\ref{fig:L_theta}.  We therefore
adaptively choose our panels to resolve this near singularity.
Specifically, we choose panels such that~$L^2(\theta)$ is integrated
to a pre-specified tolerance; this choice corresponds to integrating a
bounded function of~$r$ to within the same
tolerance. Since~$L(\theta)$ is bounded by~$\sqrt{2}$, this refinement
will also be sufficient to integrate the logarithmic singularity in
the parametrix. An example of the quadrature nodes used is shown in
Figure~\ref{fig:local_coordinates}.

\begin{remark}
  It is unfortunately the case that the quadrature nodes used to
  accurately compute the integrals above do not coincide with the
  Vioreanu-Rokhlin nodes used in the discretization of the integral
  equation. In this case, it is required that we implicitly
  interpolate the function~$\sigma$ to the quadrature grid using
  interpolation matrices described in~\cite{Greengard2021}. Composing
  these interpolation operators with the quadrature described above
  yields the required matrix entries in the discretized system. An analogous procedure for integrals which are nearly singular is discussed in the following section, Section~\ref{sec:near_sing}.
\end{remark}

\afterpage{
  \clearpage

\begin{figure}[t]
    \centering
    \includegraphics[width=0.9\textwidth]{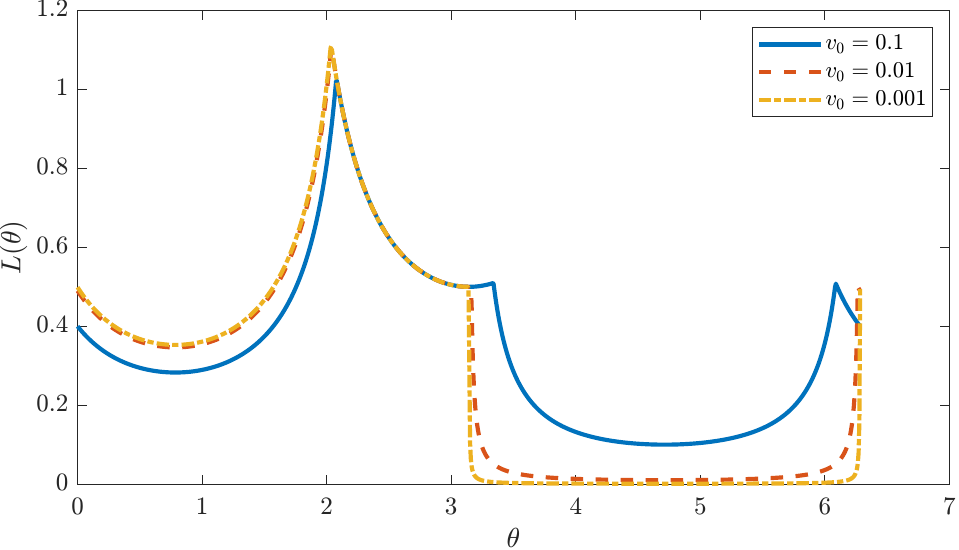}
    \caption[The nearly singular nature of~$L(\theta)$]{This figure shows an
      example of~$L(\theta)$, where the polar coordinates are centered
      at~$(0.5,v_0)$ for various choices of~$v_0$. For any center,~$L(\theta)$
      is a piecewise-smooth function, but as the center approaches the edge, it
      becomes nearly singular.}
    \label{fig:L_theta}
\end{figure}

\begin{figure}[b]
    \centering
    \includegraphics[width=0.9\textwidth]{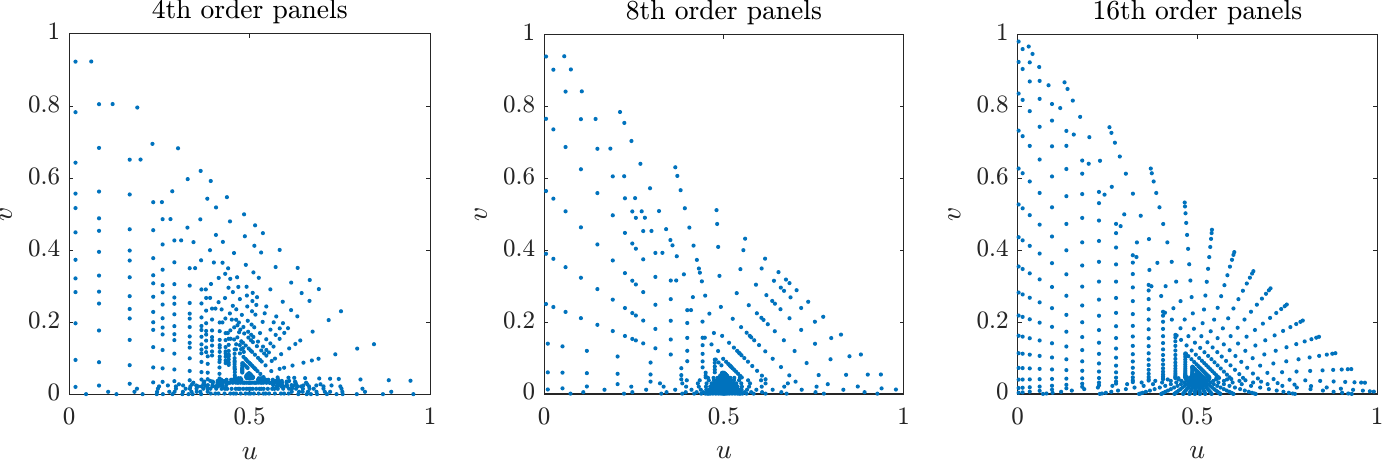}
    \caption[Radial grid for self quadrature]{The polar tensor product grid used
      to compute the self contribution when the pre-image of the target
      is~$(0.5,0.05)$. For illustrative purposes, this grid is generated using
      4th, 8th, and 16th order Gauss-Legendre panels. Our numerical examples
      will only use 16th order panels.}
    \label{fig:local_coordinates}
\end{figure}

\clearpage}

We conclude this section by testing the previously described singular quadrature
routine. We test it on three different functions of the
form~$F(r,\theta) \, \sigma(x,y)$, where~$(r,\theta)$ are polar coordinates
centered at the target point~$\bx$. The function~$F(r,\theta)$ is set to either $\cos(2\theta)$ or $\log r$, which have the same singularity as~$R$ and~$K$ respectively. The function~$\sigma$ was set to~$1$,~$\exp(x+y)$, or~$\cos(\pi x)$. For this test, we use 16th-order
Gauss-Legendre quadrature panels and check the convergence of the method with the
integration tolerance by comparing it to the value given by the same routine
with a finer tolerance. To fully test the method, we set~$\bx$ to each of the
target points that will be used in the global solver, i.e. the 16th-order
Vioreanu-Rohklin nodes. The results in~Figure~\ref{fig:radial_inttest} show that
the integration routine converges at the expected rate, independent of the test
function.

\subsubsection{Nearly singular quadrature}\label{sec:near_sing}

While changing to polar coordinates allows for the accurate computation of
integrals over a triangle containing the target point, it cannot be used for
adjacent triangles, as, in general, the pre-image of the target point
in~$uv$-plane is not known. For these nearly singular interactions, the adaptive
integration method introduced in~\cite{bremer2012weakly} and further developed
in~\cite{Greengard2021} can be used to compute the integrals to a specified
accuracy. Before continuing, we define what we mean by a triangle \emph{near}
the target point. We first define the centroid of each triangle as the average
position of its vertices. The radius of the triangle is then the largest
distance from the centroid to a vertex. We shall then say that a target is near
to a triangle if it is within~$1.5$ radii of the centroid of that triangle.

\begin{figure}[t]
    \centering
    \begin{subfigure}{0.45\linewidth} \centering{\includegraphics[width=.95\linewidth]
        {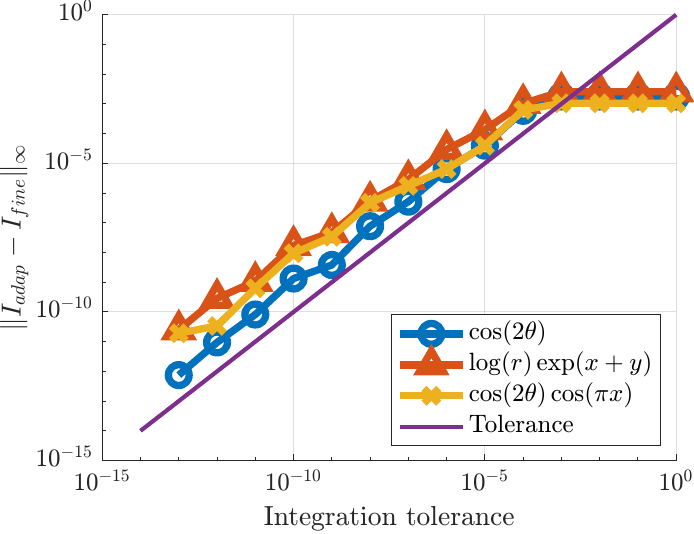}}
      \caption{The error in the singular quadrature routine when compared to the
        answer obtained using a finer tolerance. The solid line represents the
        error scaling with the integration tolerance.}
     \label{fig:radial_inttest}
   \end{subfigure}
   \quad
       \begin{subfigure}{0.45\linewidth}
     \centering{\includegraphics[width=.95\linewidth]{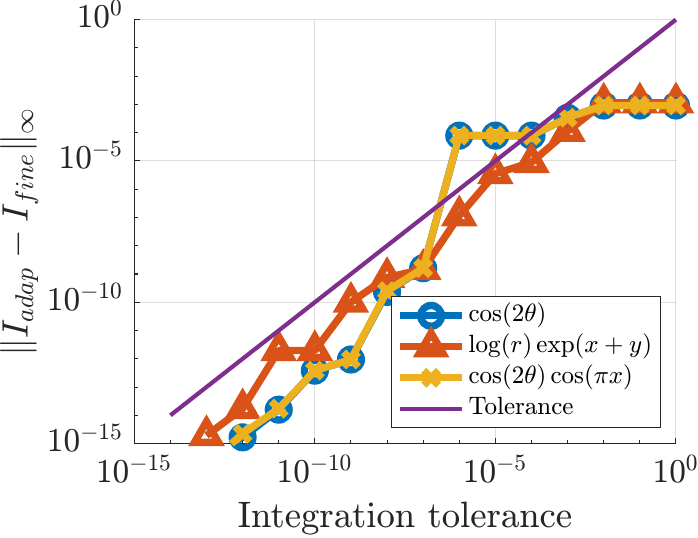}}
     \caption{The error in the adaptive integrator when compared to the answer
       obtained using a finer tolerance. The solid line represents the error
       scaling with the integration tolerance.}\label{fig:adap_inttest}
   \end{subfigure}
   \caption[The convergence of our singular and nearly singular quadrature
   methods.]{The results of a convergence test for singular quadrature and
     adaptive integration routines applied to functions of the form $F(r,\theta)\sigma(x,y)$ for different choices of~$F$ and~$\sigma$. Both tests involve repeating the experiment
     for a number of different target locations and reporting the maximum error
     observed over all test points. The figures demonstrates that both routines
     can be used to integrate functions with the (near-) singularities present in
     $\mathcal{R}$ and $\mathcal{K}$ to the desired tolerance.}
    \label{fig:inttest}
\end{figure}

To compute the integral over triangles in the near field, we implicitly construct the
polynomial interpolant~$\tilde \sigma$ that agrees with the
density~$\sigma(\by_i(u,v))$ at the Vioreanu-Rohklin nodes. We shall write the
interpolant in the basis of Koornwinder polynomials, a family of othorgonal
polynomials on the simplex:
\begin{equation}
    \tilde \sigma (u,v) = \sum_{n=1}^{n_p} c_{n} K_{n}(u,v).
\end{equation}
One feature of the Vioreanu-Rohklin nodes is that the above polymonial
interpolation is very well conditioned in this basis~\cite{B.Vioreanu2014}. To
find the coefficients, we define the matrix~$\mtx{V}$ with coefficients
\begin{equation}
    \elem{V}_{nj} = K_{n}(u_j,v_j).
\end{equation}
The coefficients~$c_{n}$ may be then found via the formula:
\begin{equation}
    c_{n} = \sum_{j=1}^{n_p} \elem{U}_{nj} \sigma(\bx_i(u_j,v_j)),
\end{equation}
where~$\elem{U}_{nj}$ is the~$nj$-entry of the matrix~$\mtx{U} = \mtx{V}^{-1}$.
We can now compute the integral over the neighboring triangle as
\begin{equation}
  \begin{aligned}
    \int_{\Gamma_i} R(\bx, \bx') \, \tilde \sigma(\bx') \, da(\bx')
    &= \sum_n c_n \int_T  R(\bx, \by(u,v)) \, K_n(u,v) \sqrt{\det g(u,v)} du \, dv \\
    &=\sum_{n,j} \sigma(\by_i(u_j,v_j))  \, \elem{U}_{nj} \int_T  R(\bx, \by(u,v)) \,
    K_n(u,v) \, \sqrt{\det g(u,v)} \, du \, dv .
  \end{aligned}
\end{equation}
The remaining integrals are independent of~$\sigma$ and  may be
precomputed to accelerate quadrature. In practice, they are computed
by adaptively splitting~$T$ and using Vioreanu-Rohklin quadrature on
each subtriangle~\cite{Greengard2021}.

We finish this section by testing our nearly singular quadrature
routine. We test it on three different functions of the
form~$F(r,\theta) \sigma(x,y)$, where~$(r,\theta)$ are polar
coordinates centered at the target point~$\bx$ outside the
simplex and~$F$ and~$\sigma$ are chosen to be the same as in Section~\ref{sec:sing_quad}. For this test, we compare it to the value given by our
routine using an oversample quadrature rule. To fully test the method,
we set~$\bx$ to each of the 16th-order Vioreanu-Rohklin nodes in a
neighbouring triangle. The results in~Figure~\ref{fig:adap_inttest}
show that the error in our adaptive integration routine scales with
our integration tolerance, independent of the test function.

\subsubsection{Smooth quadrature}

For integral contributions in~\eqref{eq:IEsum} for which the
target~$\bx$ is sufficiently far from the domain of integration,
i.e. not on~$\Gamma_i$ and not in the near field of~$\Gamma_i$, the
integrand can be treated as a smooth function and a quadrature rule
for smooth functions can be applied. In particular, these portions of
the integral equation are discretized using the Vioreanu-Rokhlin
quadrature rules for smooth functions on
triangles~\cite{B.Vioreanu2014, bremer2012weakly}. The resulting
contributions to the finite dimensional linear system are purely
Nystr\"om in style:
\begin{equation}
  \begin{aligned}
    \int_{\Gamma_i} R(\bx,\by) \, \sigma(\by) \, da(\by)
    &= \int_{T_0} R\lp\bx,\by_i(u,v) \rp \,
      \sigma(\by_i(u,v)) \, da(\by_i(u,v)) \\
    &\approx \sum_{k,l} w_{kl} \, R\lp\bx_{ij},\bx_{kl} \rp \,
      \sigma(\bx_{kl}) \, \sqrt{\det g(\bx_{kl})}.
  \end{aligned}
\end{equation}
The quadrature nodes and weights are obtained from a precomputed lookup table
based on the algorithm in~\cite{B.Vioreanu2014}.

\subsubsection{Edge quadrature}
\label{sec:edgequad}

For problems with boundaries, it is also necessary to construct a
discretization of integrals along one-dimensional curves
in~$\bbR^3$. We shall suppose that each boundary (curve) has a smooth
and periodic parameterization, $\gamma: [0,2\pi] \to \bbR^3$ with
$\gamma(0) = \gamma(2\pi)$. The curve~$\gamma = \gamma(t)$ is then
sampled at equispaced points in~$t$.  Since we only consider
boundaries that are smooth and closed curves, this allows for the use
of spectrally accurate trapezoidal quadrature rules to compute smooth
integrals.

Computing the boundary-to-surface integral operators requires
computing nearly singular integrals. For these integrals, the
boundaries and densities supported on them are over-sampled by some
fixed factor using Fourier interpolation; trapezoidal quadrature is
then used on this over-sampled discretization.  While this method is
not spatially adaptive, it can still be used to solve a wide range of
problems to high-order accuracy and is, more importantly, \emph{not} a
dominant cost of the discretization due to the one-dimensional nature
of the boundary curve.

The performance of this discretization and quadrature scheme is
discussed in Section~\ref{sec:bvps}.

\subsection{Fast matrix vector multiplication}

Coupling a fast matrix vector application with iterative Krylov methods, such as
GMRES, generally results in accelerated solvers when the number of iterations
required for convergence remains bounded and~$\cO(1)$. In order to efficiently
apply the dense integral operators appearing in the parametrix formulation of
this paper, we use a modification of the \emph{interpolative fast multipole
  method} (IFMM)~\cite{martinsson2007accelerated}. As with all fast multipole
methods (FMMs), this method is based on the observation that if a block of the
kernel matrix corresponds to an interaction between well-separated sources and
targets then it will be of low numerical rank due to the smoothness of the
kernel function. The method then exploits this observation by organizing the
discretization points~$\bx_i$ into an octree and compressing the interaction of
points in the blocks corresponding to the points in different boxes at the same
level in the octree.

In standard fast multipole method fashion, the computational domain
(in this case, discretization points on the surface~$\Gamma$) is
hierarchically partitioned via an octree.  Denoting the boxes in the
octree as~$B_k$ and indices of the nodes/points in box~$B_k$ by~$\elem{I}_k$,
then the IFMM approximates the interaction between~$B_k$ and~$B_l$ by
\begin{equation}
  \label{eq:ifmmskel}
  \sum_{j\in \elem{I}_k} K(\bx,\bx_j) \, a_j
  \approx \sum_{j\in \elem{J}_{kl}\subset \elem{I}_k} K(\bx,\bx_j) \, b_{jl}, \qquad \text{for all }
  \bx\in B_l,
\end{equation}
where $\elem{J}_{kl}$ is a specially chosen subset of~$\elem{I}_k$ and the new
coefficients/weights~$b_{kl}$ are computed based on the~$a_j$'s and the choice
of~$\elem{J}_{kl}$. This approximation is computed using an interpolative
decomposition. We recall that an exact interpolative decomposition
of a matrix is given by the following definition:
\begin{definition}
  Let~$\mtx{A}$ be an $m\times n$ matrix with rank~$k$. An interpolative
  decomposition (ID) of $\mtx{A}$ is a factorization of the form
\begin{equation}
    \mtx{A} = \mtx{A}(:,\elem{J}) \, \mtx{X},
\end{equation}
where~$\mtx{X}$ is a~$k\times n$ matrix and~$\elem{J}$ is a set
of~$k$ indices between~$1$ and~$n$.
\end{definition}
In practice, interpolative decompositions are computed so that the above
factorization is accurate (in a relative sense) to some user-specified
precision~$\epsilon >0$. These $\epsilon$-accurate IDs can be computed
numerically, for example, using a rank-revealing column-pivoted QR
decomposition, and described in~\cite{martinsson2019fast}. The ID is closely
related to the idea of skeletonization, and~$\elem{J}$ in the definition above
is referred to as the skeleton of the matrix~$\mtx{A}$.

The required skeletons~$\elem{J}_{kl}$ in~\eqref{eq:ifmmskel} can be computed by
finding an approximate ID of the
matrix~$$\mtx{K}^{lk} = \left[ K(\bx_i,\bx_j) \right]$$ for $i\in\elem{I}_l$
and~$j\in\elem{I}_k$. In practice however, this cost is prohibitively expensive
because the sizes of the largest blocks scale with the number of unknowns.
Instead, most implementations make use of a so-called \emph{proxy
  surface}~\cite{ying2004kernel,martinsson2005fast}. Typically, this involves
prescribing a number of test points~$\tilde{\bx}_i$ which lie on a
sphere~$P_{kl}$ containing~$B_k$ but not~$B_l$. The skeleton is then found by
computing the ID of the much smaller
matrix~$\left[ K(\tilde{\bx}_i,\bx_j)\right]$ with~$i\in\elem{P}_{kl}$
and~$j\in\elem{I}_k$. This approach is justified by using Green's identities for
elliptic PDEs in~$\bbR^3$~\cite{Greengard2021} and is therefore only applicable
if the kernel is some combination of derivatives of a three-dimension Green's
function.

\begin{figure}[!t]
    \centering
    \begin{subfigure}{0.45\linewidth}
     \centering{\includegraphics[width=.95\linewidth]{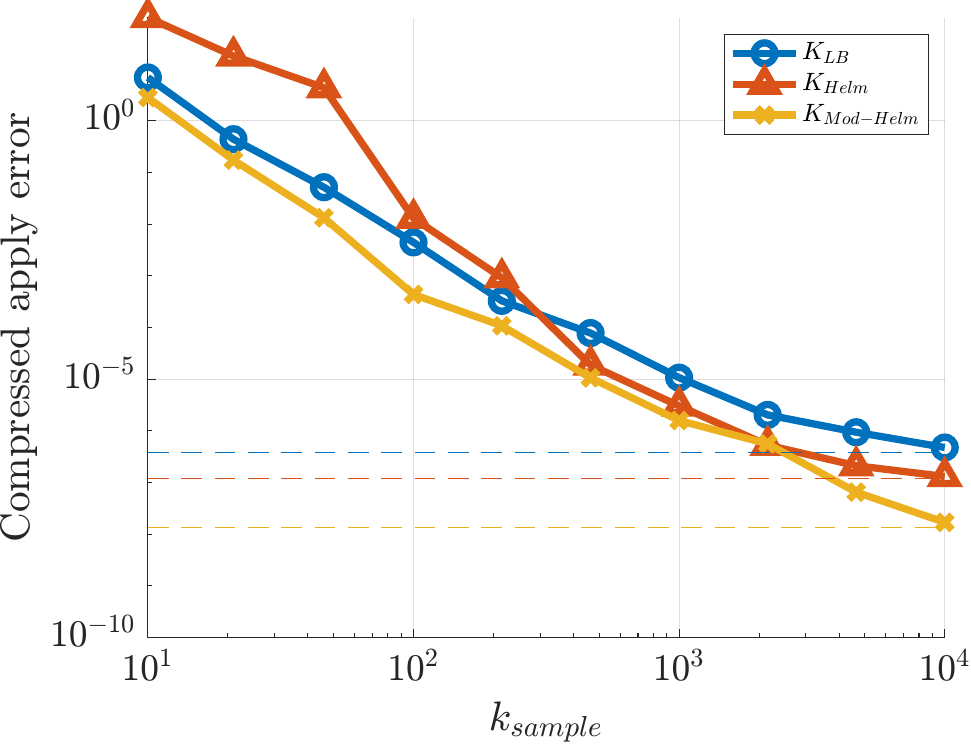}}
     \caption{The error induced in the~$N$-body calculation~\eqref{eq:Nbody}
       when the IFMM tolerance is set to~$10^{-9}$ and~$k_{s}$ is varied.
       The dashed line represents the error in the IFMM without randomized
       compression.}
     \label{fig:IFMM_confergencea}
   \end{subfigure}
   \quad
   \begin{subfigure}{0.45\linewidth}
     \centering{\includegraphics[width=.95\linewidth]{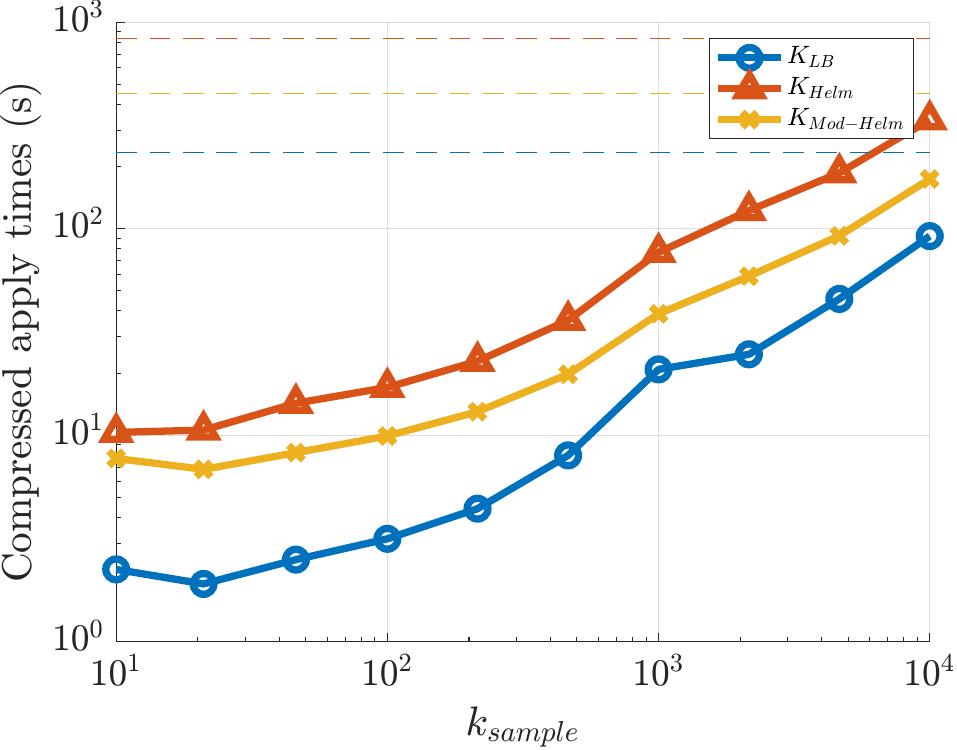}}
     \caption{The total time to use our IFMM to compute the~$N$-body
       calculation~\eqref{eq:Nbody} as a function of~$k_{s}$. The dashed
       line represents the time to use the IFMM without randomized compression.}
     \label{fig:IFMM_confergenceb}
   \end{subfigure}
   \caption[The convergence of our fast apply with respect to~$k_{s}$.]{The
     results of increasing~$k_{s}$ in our IFMM. We see that
     for~$k_{s}=10,000$, the IFMM has the same accuracy as the deterministic
     IFMM, but only involves a~$17\%$ of the points. This fact allows us to
     compute the IFMM to the same tolerance in a fraction of the time.}
    \label{fig:IFMM_confergence}
\end{figure}

Since the kernels appearing in the integral equations of this work are
\emph{not} formally PDE-kernels, as an alternative method to accelerate the
compression we compute and ID of the matrix~$\mtx{K}^{lk}$ by picking~$k_{s}$
target points (i.e. rows) from~$\mathcal{I}_l$ and~$k_{s}$ source points (i.e.
columns) from~$\mathcal{I}_k$. This class of methods has been previously
explored in~\cite{xing2020interpolative}. In this paper, we use the simplest
version of this algorithm where the sampled points are chosen randomly. In order
for this sampling to be robust, the random subset should be chosen to cover the
both~$B_k$ and~$B_l$. In practice, however, we have found that choosing points
independent of target and source locations has proved effective in the numerical
examples presented below. Our particular implementation is based on the
implementation of the IFMM found in the FLAM package~\cite{Ho2020}.

The approximation accuracy of our implementation of the
IFMM is validated by comparing it to the direct matrix apply. We
perform an $N$-body calculation
\begin{equation}
  \label{eq:Nbody}
  \phi_i = \sum_{j\neq i} F(\bx_i,\bx_j) \, \sigma_j,
\end{equation}
where the $\bx_i$'s are a discretization of a wavy torus shown
in~Figure~\ref{fig:LBsurfaces} using 58,752 points. (The same geometry is used
later on in the numerical experiments section.) We
choose~$\sigma_i=(\bx_i)_1+\eta_i$ where~$(\bx_i)_{1}$ denotes the
$x$-coordinate of~$\bx_{i}$ and $\eta_i$ is a realization of a standard normal
random variable. In~Figure~\ref{fig:IFMM_confergencea} we show the~$l^2$ error
between the direct calculation and the IFMM result, and how that error depends
on the choice of~$k_{s}$. In~Figure~\ref{fig:IFMM_confergenceb} we show how our
randomized compression accelerates the IFMM. In all of these tests, we set the
relative ID compression tolerance to~$10^{-9}$. We provide three different cases
in each subplot in Figure~\ref{fig:IFMM_confergence}: (1) with~$F$ set to the
parametrix for the Laplace-Beltrami problem, (2) $F$ set to the parametrix for
the Helmholtz-Beltrami problem, and (3) with $F$ set to the
modified-Helmholtz-Beltrami paramtrix. We also tested with~$F$ set to the
associated remainder functions and found similar results.

Lastly, we also test the complexity of this scheme using the same surface and
numerical tolerance as in the previous test; $k_{s}$ is set to $10,000$. The time
taken to build and apply with various different surface discretizations is shown
in~Figure~\ref{fig:sampling_time_scaling}. Once~$N>k_{s}$, we see that the
method scales as~$O(N)$.

\begin{figure}[!t]
    \centering
    \includegraphics[width=0.7\textwidth]{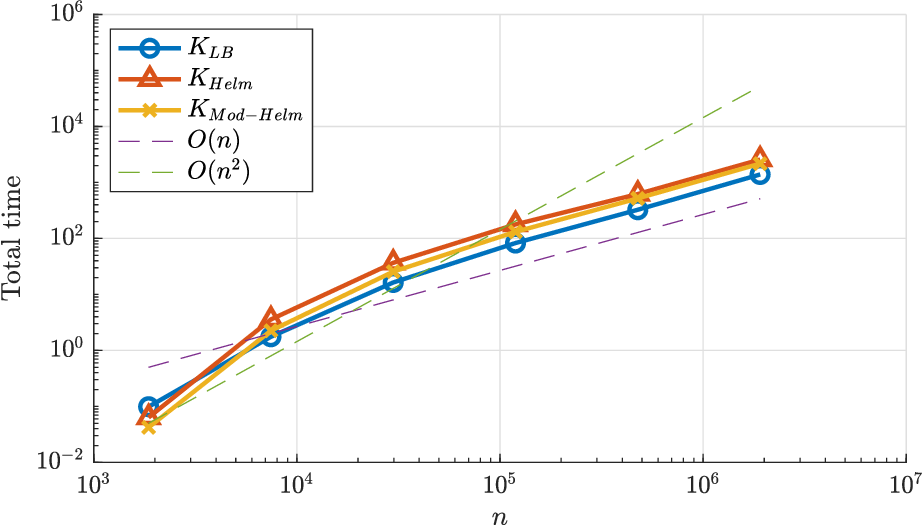}
    \caption{The time to build and apply our IFMM on successively finer
      discretizations of a wavy torus.}
    \label{fig:sampling_time_scaling}
\end{figure}

\section{Numerical Experiments}
\label{sec:Numeric_Exp}
In this section, we report the results of some numerical examples demonstrating
the integral equation solvers detailed above. In all of the tests, GMRES was
used to solve the linear system. The GMRES, integration, and compression
tolerances were set to~$10^{-9}$. In all tests, our errors are measured in the
relative~$L^2$ sense. In all tests we set~$k_{s}=10,000$ because the above
experiments indicate that that is enough samples for the IFMM to achieve the
same error as the deterministic method with this tolerance.

We shall conduct tests on three surfaces: the unit sphere (\figref{fig:sphere_testa}); an ellipsoid with semi axis lengths 4.5, 2.25, and 3 (\figref{fig:LBsurfaces}, left); and a wavy torus (\figref{fig:LBsurfaces}, right), given by the parameterization
\begin{equation}
    \bs r(u,v) = \begin{bmatrix}
        (3+\cos(v)+0.6\cos(5u))\cos(u) \\
        (3+\cos(v)+0.6\cos(5u))\sin(u) \\
        \sin(v)
    \end{bmatrix}\quad\text{where}\quad (u,\,v) \in [0,2\pi)^2.
\end{equation}
In Section~\ref{sec:bvps}, where we solve boundary value problems, we consider~$\Gamma$ to be the wavy torus with~$u$ restricted to lie in $(4\pi/3,2\pi)$.

\subsection{Spherical harmonic validation}
It is well known that the spherical harmonics~$Y_l^m$ are the eigenfunctions of
the Laplace-Beltrami operator on a sphere; their eigenvalues are~$-l(l+1)$. Since the
spherical harmonics form an orthogonal basis for~$L^2$ on the sphere, we can
expand any~$f$ in terms of the spherical harmonics by computing inner products.
We can then analytically solve of the Laplace-Beltrami problem by dividing the
expansion coefficients by the corresponding eigenvalues and then summing the
resulting expansion.

We used this observation to test our method by letting~$f$ be a random linear combination
of~$Y_1^0, Y_3^3, Y_{10}^6,$ and $Y_{10}^7$ and comparing the solution given by
our solver to the analytic solution (Figure~\ref{fig:sphere_testa}). Since
adding a constant~$c$ to the Laplace-Beltrami operator simply adds~$c$ to the
eigenvalues, we can use the same method to test our solver for the Helmholtz-
and modified Helmholtz-Beltrami problems. The errors in our computations are
shown in~Figure~\ref{fig:sphere_testb}. We can see that the method converges to
around the specified tolerance for each value of~$c$. The solver had slightly
less accuracy when~$c$ was negative because the singular quadrature routine was
not designed for the exponential decay in the parametrix.

\begin{figure}[t]
    \centering
    \begin{subfigure}{0.45\linewidth}
     \centering{\includegraphics[width=.95\linewidth]{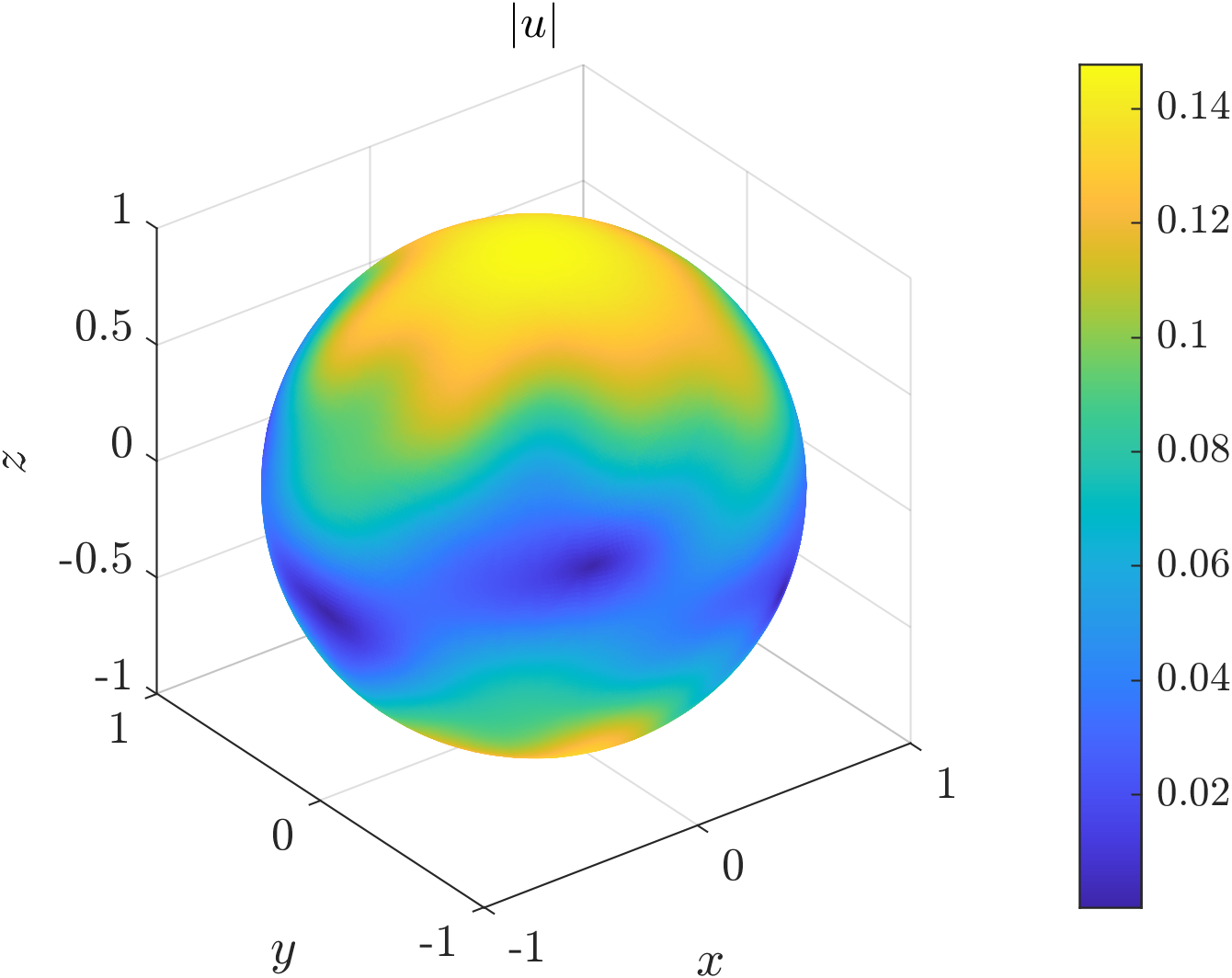}}
     \caption{A exact solution obtained from a random linear combination of four
       spherical harmonics with~$l$ varying from 1 to 10. The solution is shown
       on the finest discretization.}
     \label{fig:sphere_testa}
   \end{subfigure}
   \quad
       \begin{subfigure}{0.45\linewidth}
     \centering{\includegraphics[width=.95\linewidth]{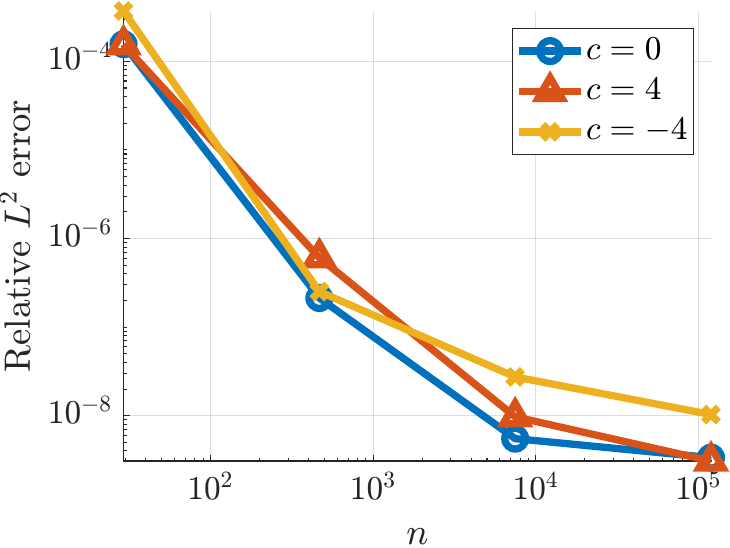}}
     \caption{The error in our method when~$c$ and the number of surface discretization points are varied.}\label{fig:sphere_testb}
   \end{subfigure}
    \caption[The results of testing our parametrix solver on a sphere.]{The results of testing our parametrix solver on a sphere.}
    \label{fig:sphere_test}
\end{figure}


\subsection{A Laplace-Beltrami problem}

In order to validate our Laplace-Beltrami solver on a broader class of surfaces,
we use the method of manufactured solutions. Following the approach
in~\cite{ONeil2018}, we did this by choosing~$\Gamma$ to be a smooth surface and
choosing the exact solution to be a constant plus the restriction of a smooth
function $v$ defined in all of $\bbR^3$. We then generated the corresponding
right hand side $f$ by applying the formula~\eqref{eq:diff_remainder} to~$v$:
\begin{equation}
  \label{eq:LB_rest_p}
  f=\LB \left(v|_\Gamma\right) = \Delta v-2H \frac{\partial
    v}{\partial \bs n}-\frac{\partial^2 v}{\partial \bs
    n^2} .
\end{equation}
We evaluate $f$ by analytically computing
the terms in~\eqref{eq:LB_rest_p} based on a global parameterization of
the surface.

In this example,~$\Gamma$ is given by either a wavy torus or an ellipsoid (Figure~\ref{fig:LBsurfaces}). We set~$v$ to be the Newtonian potential centered at~{$\bs x_0=(-3,-3,4)$}, which is a point outside of, but fairly close to, both choices for~$\Gamma$:
\begin{equation*}
    v(\bs x) = -\frac{1}{|\bs x - \bs x_0|}.
\end{equation*}
The exact solution to this problem is given by
$u=v|_\Gamma-\frac{1}{|\Gamma|}\int_\Gamma v$, where~$|\Gamma|$ is the surface
area of~$\Gamma$. The reference solutions are shown
in~Figure~\ref{fig:LBsurfaces}. We conducted a convergence test by repeatedly
refining the triangles used in our surface discretization and thus varying~$n$.
We can see from~Figures~\ref{fig:torus_testevp} and~\ref{fig:ell_testevp} that
our method is capable of achieving small relative errors, and therefore the
solver is working as expected. From~Figures~\ref{fig:torus_testtvp}
and~\ref{fig:ell_testtvp}, we can see that the computational time of our method
increases super-linearly, but not quadratically on the tested range, so our fast
apply is working and our matrices are indeed compressible.

\begin{figure}
    \centering
    \includegraphics[width=.95\textwidth]{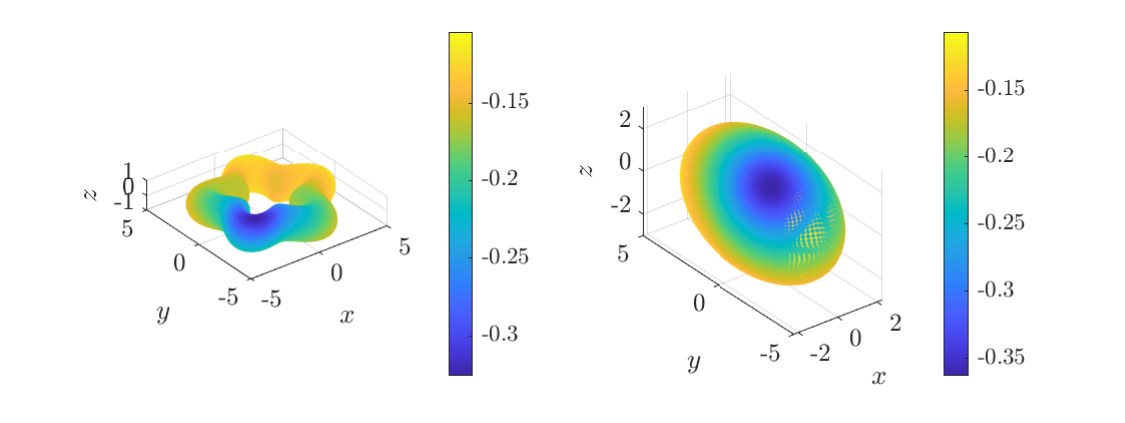}
    \caption{The reference solution for the Laplace-Beltrami solver on two choices of~$\Gamma$.}
    \label{fig:LBsurfaces}
\end{figure}

\begin{figure}
    \centering
    \includegraphics[width=.8\textwidth]{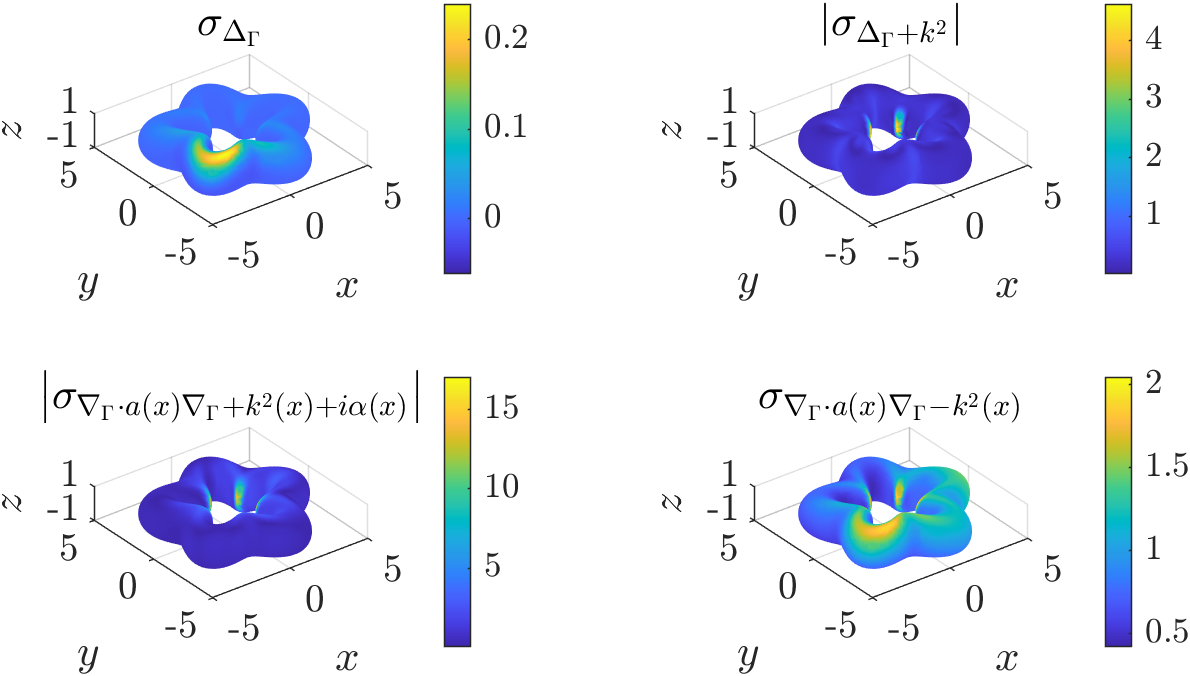}
    \caption{The solution densities on a wavy torus for various surface PDEs. All densities correspond to the same solution~$u$.}
    \label{fig:sigmas}
\end{figure}

\begin{figure}
    \centering
    \includegraphics[width=.8\textwidth]{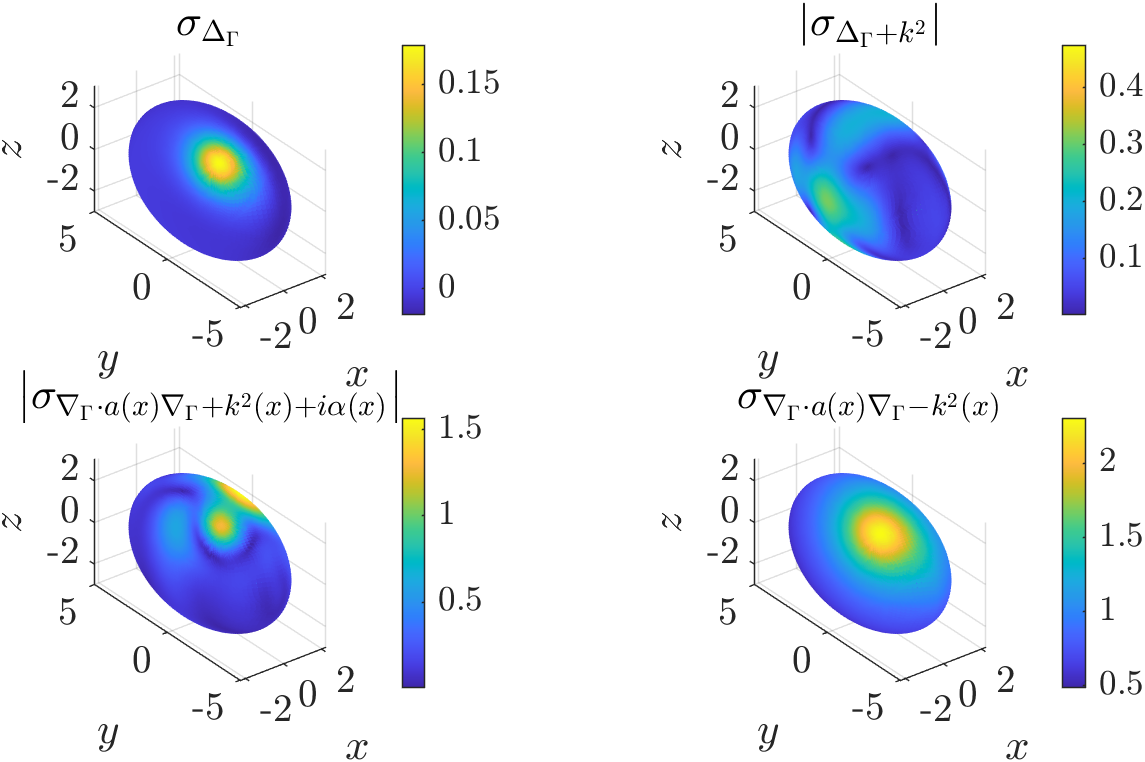}
    \caption{The solution densities on an ellipsoid for various surface PDEs. All densities correspond to the same solution~$u$}
    \label{fig:sigmas_ell}
\end{figure}

\begin{figure}
    \centering
        \begin{subfigure}{0.45\linewidth}
    \includegraphics[width=.95\textwidth]{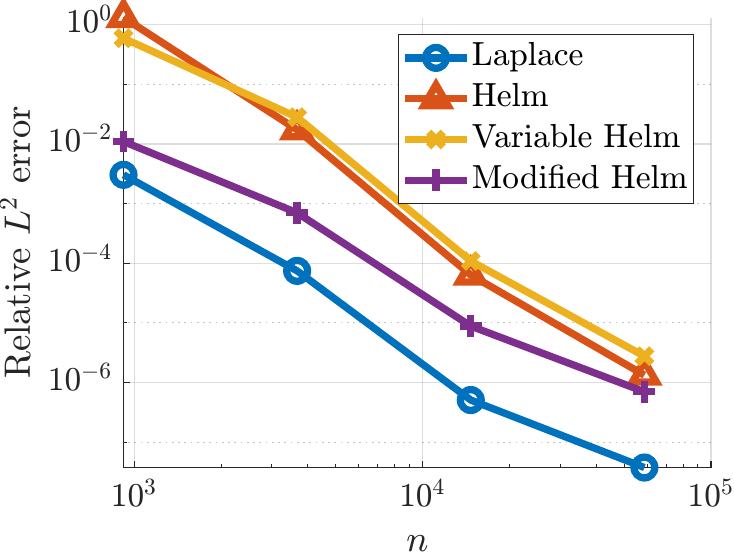}
    \caption{The errors of the computed solution in the wavy torus
      tests where $n$ is the number of points used to discretize the surface.}
    \label{fig:torus_testevp}
   \end{subfigure}
   \quad
       \begin{subfigure}{0.45\linewidth}
    \centering
    \includegraphics[width=.95\textwidth]{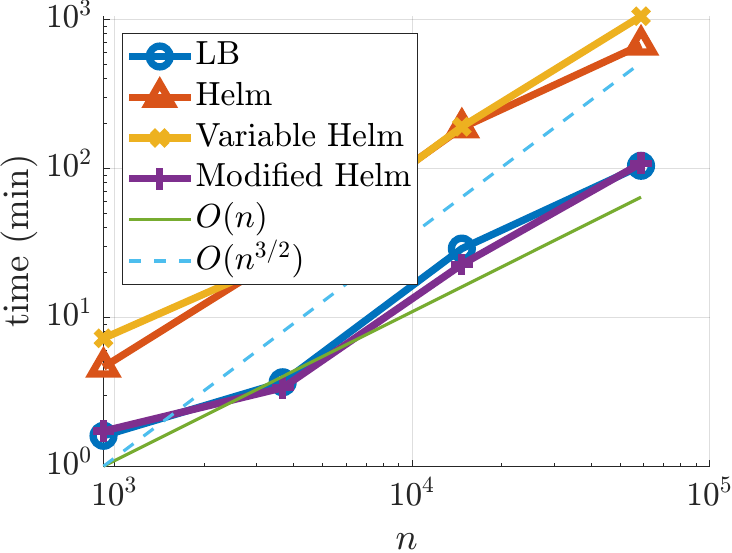}
    \caption{The time to compute the solution in the wavy torus
      tests where $n$ is the number of points used to discretize the surface.}
    \label{fig:torus_testtvp}
       \end{subfigure}
       \caption{Results of the wavy torus
      tests.}
       \label{fig:torus_test_res}
\end{figure}

\begin{figure}
    \centering
        \begin{subfigure}{0.45\linewidth}
    \includegraphics[width=.95\textwidth]{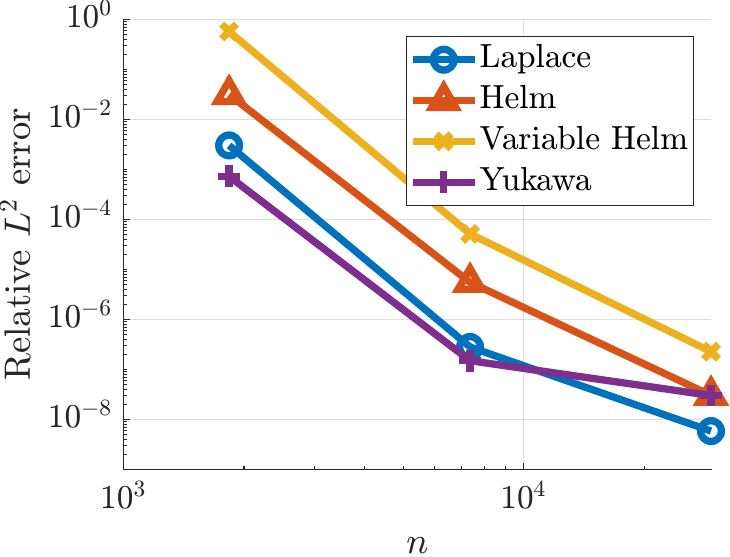}
    \caption{The errors of the computed solution in the  ellipsoid
      tests where $n$ is the number of points used to discretize the surface.}
    \label{fig:ell_testevp}
   \end{subfigure}
   \quad
       \begin{subfigure}{0.45\linewidth}
    \centering
    \includegraphics[width=.95\textwidth]{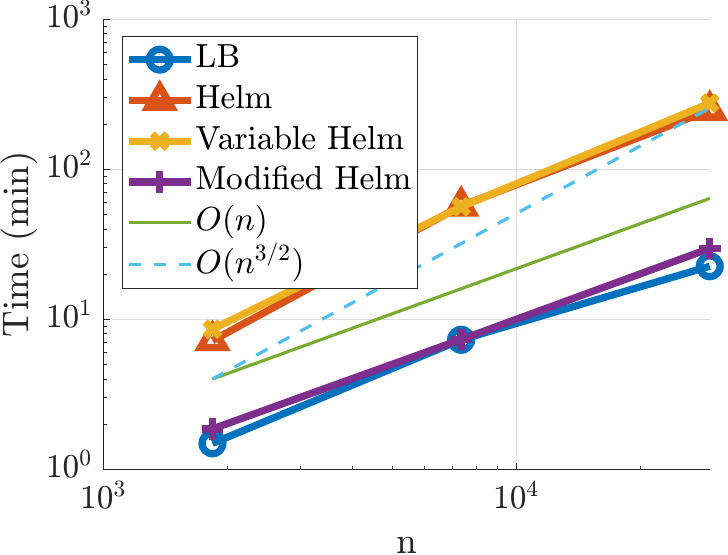}
    \caption{The time to compute the solution in the ellipsoid
      tests where $n$ is the number of points used to discretize the surface.}
    \label{fig:ell_testtvp}
       \end{subfigure}
       \caption{Results of the ellipsoid
      tests.}
       \label{fig:ell_test_res}
\end{figure}


\subsection{A Helmholtz-Beltrami problem}

We now test our method on the Helmholtz-Beltrami problem with~$c$ set to the
moderate value of 4. As before, we use the method of manufactured solutions. We
again let~$v$ be a Newtonian potential and we set the solution to
be~$u=v|_\Gamma$ and construct the right hand
side~$f=\LB \left(v|_\Gamma\right) +c v|_\Gamma$, where we have again
used~\eqref{eq:diff_remainder} to compute~$f$ analytically. The results of our
tests are shown in~Figures~\ref{fig:torus_test_res} and~\ref{fig:ell_test_res}.
We see that the method is less accurate on this surface PDE, but still converges
at a simliar rate. In Figures~\ref{fig:sigmas} and~\ref{fig:sigmas_ell}, we see
that this limited accuracy is likely due to the fact that the density~$\sigma$
is not sufficiently resolved. Figure~\ref{fig:torus_test_res} also shows that
the solver takes longer to solve the Helmholtz-Beltrami problem than the
Laplace-Beltrami problem. This was predominantly caused by the increased number
of GMRES iterations required to solve the linear system.

\subsection{A variable coefficient surface PDE}

In this experiment, we test our method on a problem where the parameters~$a$ and
$c$ are not constant. In this case, the remainder kernel~$R$ is weakly singular,
but adaptive integration can still be used to accurately discretize the integral
equation. We use the same singular quadrature routine as before due to the fact
that the singularity in~$R$ will be cancelled via the change to polar
coordinates. We test our code in the same way as before, by
choosing~$u=v|_\Gamma$, where~$v$ is a Newtonian potential and analytically
computing the corresponding right hand side.

For this experiment, we set~$a$ to be a linear function of the coordinate~$z$, chosen
to vary by a factor of three over the surface. We also
set~$c=-4-\exp(x/2)$. We also tested our solver with $a=1+\exp(z)$ and
$c=4 \, (1+0.2y+0.2 i \sin x)$.

We can see from~Figures~\ref{fig:torus_test_res} and~\ref{fig:ell_test_res} that
introducing variable coefficients did not greatly impact the error in the
solution or the time to compute it.

\subsection{Boundary value problems}
\label{sec:bvps}

We now test our method for solving the Laplace-Beltrami equation with Neumann or
Dirichlet boundary conditions. We choose~$\Gamma$ to be a subset of the wavy torus
from the previous three sections.
The performance of this discretization is tested by estimating the error in
applying the operators~$\cR_\gamma$ and~$\cT_\gamma$ to a specified
function~$\mu$ on the example surface shown in~Figure~\ref{fig:edge_conv_surf}.
As before, we set all of the our IFMM tolerances to~$10^{-9}$. We construct a
discretization of the surface using the method above, and sample each boundary
(curve) using 200 points equispaced in the parameterization variable. We
estimate the quadrature error by comparing to the values given by the finest
discretizations. In this test we chose~$\mu = \exp(x)$ and used the
operators~$\cR_\gamma$ and~$\cT_\gamma$ as in the Laplace-Beltrami problem. The
estimated errors for various oversampling factors are shown
in~Figure~\ref{fig:edge_conv}. We can see that once the near singularity is
resolved, the error decays quite quickly. We can also see that more points are
required to resolve the singularity when the surface discretization is refined
and discretization nodes are brought closer to the boundary.

\begin{figure}[t]
  \centering
  \includegraphics[width=.8\linewidth]{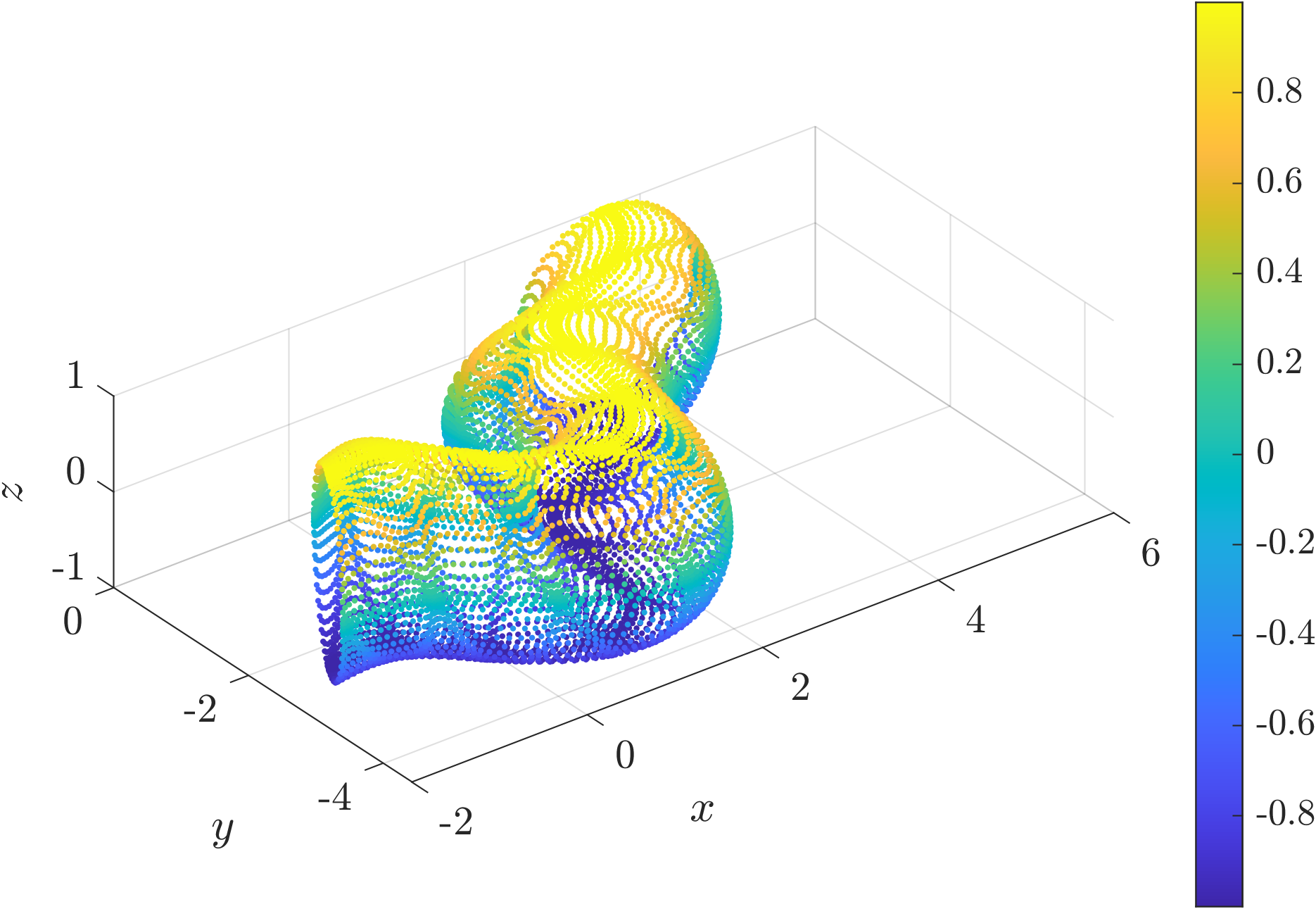}
  \caption{The example surface for the edge quadrature test.}
  \label{fig:edge_conv_surf}
\end{figure}

We now test our solver for surface boundary value problems.
We again choose the solution~$u$ to be the
restriction of a Newtonian potential to~$\Gamma$ and analytically compute the
corresponding~$f$ and~$f_\gamma$. Each subset of~$\gamma=\partial\Gamma$ is
discretized with~$n_\phi$ equispaced nodes and we set the oversampling factor to
64. In subsequent tests, we refined our discretization by independently
splitting each triangle in our surface discretization and doubling the number of
points used to discretize each boundary.

The results of our tests are shown in~Figure~\ref{fig:bdry-data}. We can see
that as both the surface and edge discretizations are refined, the error
decreases as expected. As in the edge quadrature test, the stronger singularity
in the kernel~$\cT_\gamma$ results in the Dirichlet boundary value problem
requiring an increased number of points in order to achieve a similar accuracy.

\afterpage{
  \clearpage

\begin{figure}[t]
  \centering
  \begin{subfigure}{0.45\linewidth}
    \centering{\includegraphics[width=.95\linewidth]{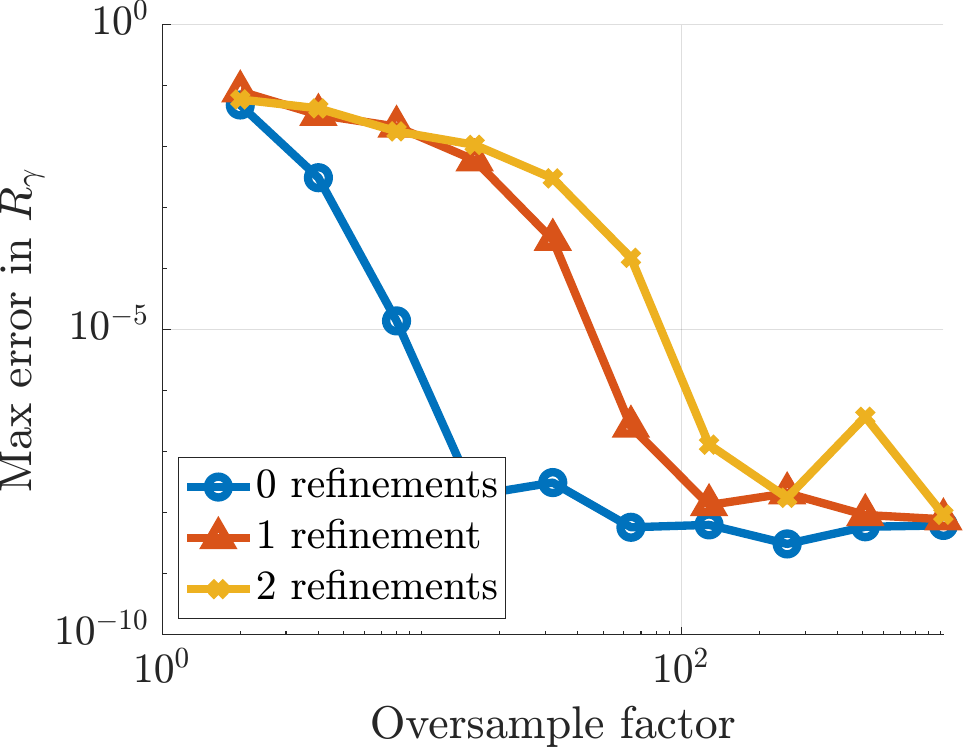}}
    \caption{The test with $\cR_\gamma$, which is required for the
      Neumann boundary value problems.}\label{fig:edge_conva}
  \end{subfigure}
  \quad
  \begin{subfigure}{0.45\linewidth}
    \centering{\includegraphics[width=.95\linewidth]{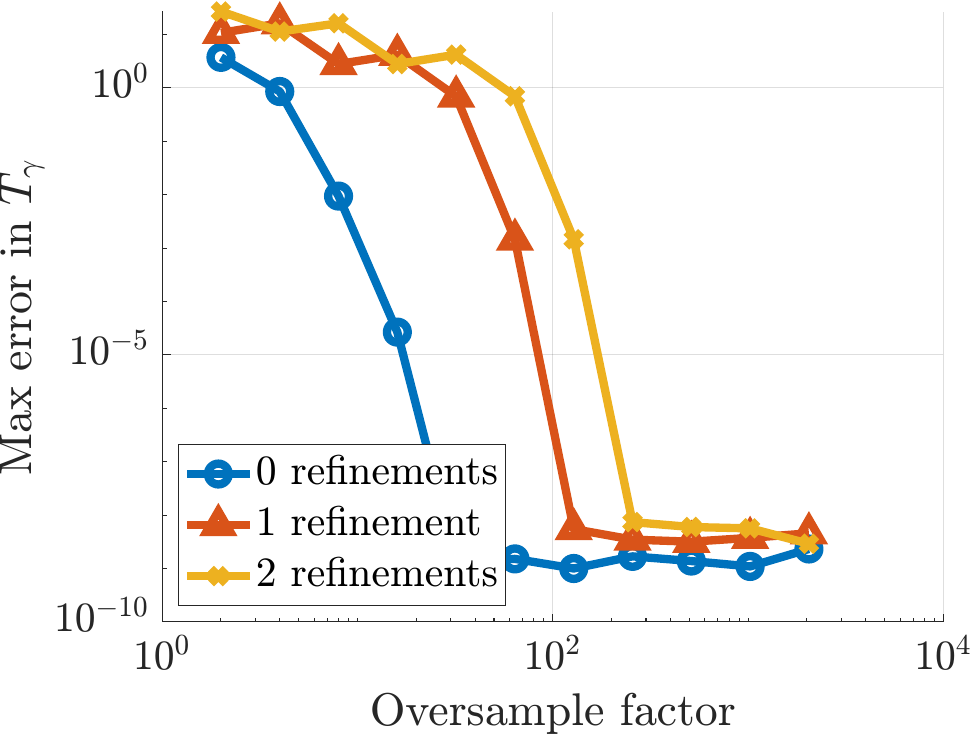}}
    \caption{The test with $\cT_\gamma$, which is required for the
      Dirichlet boundary value problems.}\label{fig:edge_convb}
  \end{subfigure}
  \caption[Edge quadrature test]{This graph shows how the error in the
    application of~$\cR_\gamma$ and~$\cT_\gamma$ depends on the
    oversampling factor and the density of the surface
    discretization.}
    \label{fig:edge_conv}
\end{figure}

\begin{figure}[b]
    \centering
    \begin{subfigure}{0.45\linewidth}
     \centering{\includegraphics[width=0.95\textwidth]{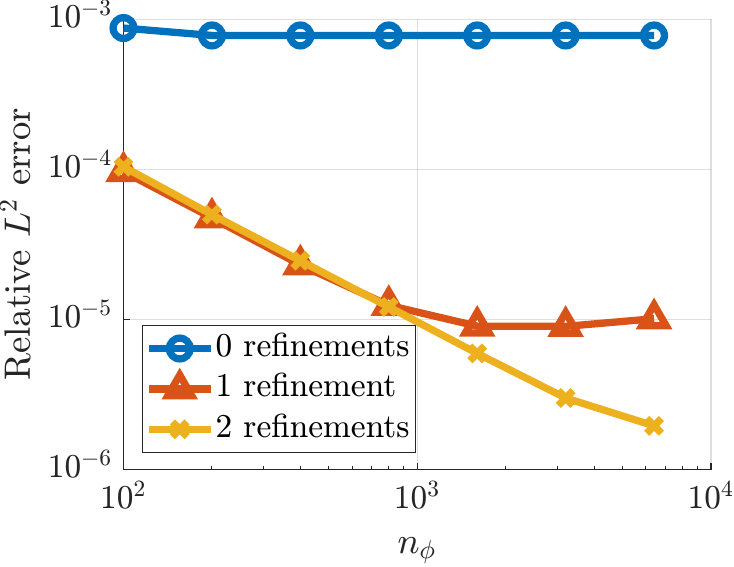}
    \caption{The error in the Neumann boundary value solver.}}
     
   \end{subfigure}
   \quad
    \begin{subfigure}{0.45\linewidth}
     \centering{\includegraphics[width=0.95\textwidth]{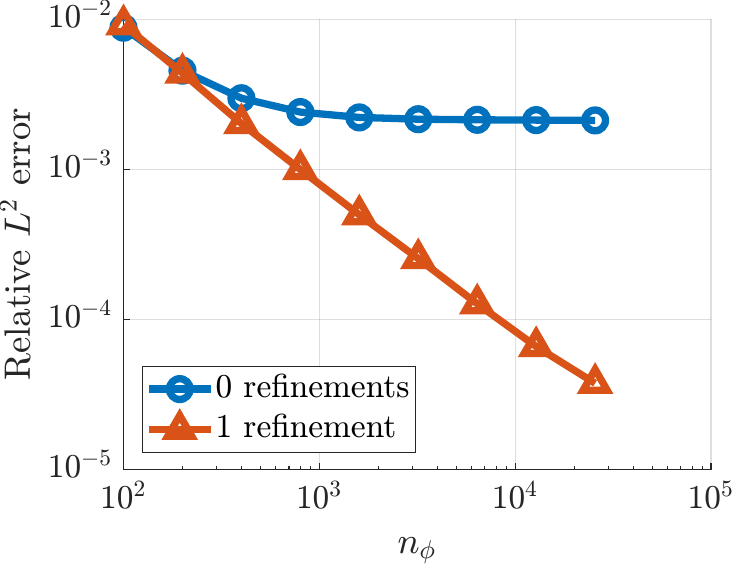}
    \caption{The error in the Dirichlet boundary value solver.}}
   \end{subfigure}

   \caption{The error in our boundary value problem solvers. In these tests we
     measure the~$L^2$ error as we refine the surface and edge discretizations.}
    \label{fig:bdry-data}
\end{figure}

\clearpage
}

\section{Conclusion}
\label{sec:conclussion}

In this work we have presented a general framework for reformulating variable
coefficient surface PDEs as boundary integral equations using a family of
parametrices that scale according to the coefficients in the particular surface
PDE in question. The resulting integral equations are second-kind and quite
well-conditioned. Furthermore, we've demonstrated via a battery of numerical
examples that they can be rapidly solved using a combination of hierarchical
linear algebraic compression coupled with a iterative solver, such as GMRES. The
associated kernels of the integral equations are either bounded or weakly
singular, and some special-purpose quadrature schemes were developed to
accurately compute the boundary integrals.


Going forward, the analytical and numerical ideas presented here can be extended
in various ways, including to solving problems on curves in two dimensions and
for solving anisotropic problems, i.e., problems with leading
term~$\nabla_{\Gamma} \cdot \mtx{A} \nabla_{\Gamma}$, where~$\mtx{A}$ is
non-degenerate (possibly non-constant) matrix. In this latter case, preliminary
choices for a parametrix resulted in integral equations having kernels which
were quite singular. Additional analysis and numerical experiments are required
to develop a more suitable parametrix.

Lastly, there is significant evidence that even faster matrix-vector multiplies
can be obtained using a more careful analysis of the remainder kernels, in the
style of multipole expansions for true PDE kernels (such as~$1/r$
or~$e^{ikr}/r$). Developing such fast algorithms is actively being investigated.

\bibliography{parametrix_bib.bib}

\end{document}